\theoremstyle{plain}
\newtheorem{theorem}{Theorem}[section]
\newtheorem{lemma}[theorem]{Lemma}
\newtheorem{prop}[theorem]{Proposition}
\newtheorem{cor}[theorem]{Corollary}
\newtheorem{conjecture}[theorem]{Conjecture}
\newtheorem{lettertheorem}{Theorem}
\theoremstyle{definition}
\newtheorem{definition}[theorem]{Definition}
\newtheorem{remark}[theorem]{Remark}
\newtheorem{example}[theorem]{Example}
\theoremstyle{remark}
\newtheorem{claim}[theorem]{Claim}
\titleformat{\subsection}[runin]% runin puts it in the same paragraph
       {\normalfont\bfseries}% formatting commands to apply to the whole heading
       {\thesubsection}% the label and number
       {0.5em}% space between label/number and subsection title
       {}% formatting commands applied just to subsection title
       [.]% punctuation or other commands following subsection title
\renewcommand{\tilde}{\widetilde}
\renewcommand{\bar}{\overline}
\newcommand{\bbQ}{\mathbb{Q}}
\newcommand{\bbZ}{\mathbb{Z}}
\newcommand{\FA}{A_{\text{id}}}
\DeclareMathOperator{\spa}{span}
\DeclareMathOperator{\cone}{cone}
\DeclareMathOperator{\conv}{Conv}
\DeclareMathOperator{\vol}{Vol}
\DeclareMathOperator{\relvol}{relVol}
\numberwithin{equation}{section}
\title{On the size of Bruhat intervals } %with convex geometry?}
\author{ Federico Castillo\thanks{Pontificia Universidad Catolica, Santiago, Chile}, Damian de la Fuente\thanks{LAMFA, Universit\'e de Picardie Jules Verne, Amiens, France}, Nicolas Libedinsky\thanks{Universidad de Chile, Santiago, Chile}  and David Plaza\thanks{Universidad de Talca, Talca, Chile}  }
\date{ }
\begin{document}
\maketitle

\begin{abstract}
%In the field of Coxeter Systems, a fundamental challenge lies in determining the cardinalities of arbitrary Bruhat intervals. 
For affine Weyl groups and elements associated to dominant coweights, we present a  convex geometry formula for the size of the corresponding lower Bruhat intervals. Extensive computer calculations for these groups have led us to believe that a similar formula exists for all lower Bruhat intervals.

\end{abstract}

\maketitle

\section{Introduction}
\subsection{Generalities}\label{gen} 

%\textcolor{red}{ In \cite{postnikov2009permutohedra} Postnikov studied permutohedra of general types. Among them, one of the most notable is the regular permutohedron of type $A$: this is the convex hull of the orbit of the point $(1,2,\dots,n)$ under the action of the symmetric group. The volume of the regular permutohedron $\Pi_n$ is famously $n^{n-2}$, the number of trees on the set $[n]$. The number of integer points is the number of forests on $[n]$ \cite[Section 3]{stanley1980decompositions}. There are other interpretations for the integer points, for instance \cite[Proposition 4.1.3]{backman2019geometric} presents an interpretation of integer points in a family of zonotopes, that include the regular permutohedron, as certain orientations of the complete graph. For general, non-regular, permutohedra there is no interpretation of the integer points. In \cite[Section 11]{postnikov2009permutohedra} there are formulas for the number of integer points of a family of \emph{generalized permutohedra}. We present here and now, for the first time, an algebro-combinatorial interpretation for integer points of permutohedra of any type.   } 

%%Toca agregar algunas cositas de que "INTEGER POINTS" de permutohedron es en realidad para nosotros "ROOT POINTS" bajo nuestras traducciones internas.

While calculating with indecomposable Soergel bimodules \cite{LP} and Kazhdan-Lusztig polynomials \cite{batistelli2023kazhdan},  \cite{PreCanonical} for affine Weyl groups, it became apparent that finding formulas for the cardinalities of lower Bruhat intervals played a crucial role. 

Surprisingly, little is known apart from length $2$ (general) intervals \cite[Lemma 2.7.3]{BB}, lower intervals for smooth elements in Weyl groups \cite{OhPostnikovYoo}, \cite{MSY} and related results for affine Weyl groups \cite{RS},  \cite{BE}. 
Although the known cases are particularly important from a geometric viewpoint, to the best of the authors' knowledge there is no program for finding cardinalities of all lower intervals.
This motivated us to embark on a series of papers aimed at bridging this gap in the case of affine Weyl groups.
In this paper, we relate the Bruhat order with convex geometry.
With ideas similar to those presented here, we expect to go beyond lower intervals into the realm of general intervals in future work. 

In this paper we study, for any affine Weyl group,  the lower interval for the element  $\theta(\lambda)$ (see Definition \ref{def: theta}) associated to a dominant coweight $\lambda$.
These are the elements that originally appeared in the research on Soergel bimodules and Kazhdan-Lusztig polynomials mentioned above; they are intimately related to representation theory (character formulas for Lie groups, geometric Satake equivalence, quantum groups, among others).

The main result of this paper is a formula relating the cardinality of the lower interval $[\mathrm{id}, \theta(\lambda)]$ and the volumes of the faces of a certain polytope.
We guessed this formula by examining the $\widetilde{A}_2$ case and
assuming that certain phenomena that hold there continue to hold in higher dimensions. Although these phenomena turned out to be low-rank accidents, the formula miraculously survived.

%By examining the $\widetilde{A}_2$ case and relying on some ``intuitions'', we guessed a formula for the cardinality of the lower interval $[\mathrm{id}, \theta(\lambda)]$  in terms of some Euclidean volumes.
%The main theorem of this paper is the proof of that conjecture.
%Our main contribution is a geometric formula relating Bruhat intervals and the volumes of faces of a certain polytope.
%Interestingly enough, although  all the aforementioned ``intuitions'' were false (as explained in Section \ref{a2} of this introduction),  the formula miraculously survived, which is a noteworthy aspect of our work.

This paper makes apparent that Bruhat intervals for affine Weyl groups are intricately connected to Euclidean geometry. 
Another manifestation of this connection is the observation in the work in progress \cite{BLV}, that for an affine Weyl group possibly all ``non-silly'' isomorphisms of  Bruhat intervals (of length bigger than the order of the finite Weyl group) are just Euclidean translations of the connected components of the interval.
A different kind of connection between these two worlds, but this time for the symmetric group, was developed in \cite{kodama2015full} and further explored in \cite{tsukerman2015bruhat}: the \emph{Bruhat interval polytopes} consisting of the convex hull of permutations in an arbitrary Bruhat interval.

\subsection{The  \texorpdfstring{$\widetilde{A}_2$}{} case}\label{a2}
Let us consider $W$ the affine Weyl group of type $\widetilde{A}_2$, and the usual identification between elements in $W$ and triangles (alcoves) in the tessellation of the plane by equilateral triangles. If $x$ is an element of $W$, when we write $x\subset \mathbb{R}^2$, we mean the set of points in the closure of the alcove corresponding to $x$ (the closed triangle).  
In Figure 1 we have the simple roots $\alpha_1$ and $\alpha_2$ in blue and in red, and the fundamental weights $\varpi_1$ and $\varpi_2$. 
For a dominant weight $\lambda \in X^+ 
:=\mathbb{Z}_{\geq0}\varpi_1+\mathbb{Z}_{\geq0}\varpi_2$ (depicted by a white dot in
Figure 1),
let $\theta(\lambda)\in W$ denote the $\lambda$-translate of
the opposite of the fundamental alcove: those are the grey
triangles.

Let also $Y_\lambda$  denote $\mathrm{Conv}(W_f\cdot
\lambda)$, the convex hull of the orbit of $\lambda$ under the finite
Weyl group $W_f$. For $\lambda = \varpi_1+2\varpi_2$, it is the yellow
hexagon in Figure 2. The faces of $Y_\lambda$ containing $\lambda$ are
$$F_J := Y_\lambda\cap ( \lambda + \sum_{i \in J} \mathbb{R}\alpha_i) \quad J
\subset \{1,2\}.$$
\vspace{0.5cm}

\begin{figure}[ht]
    \centering
    \begin{minipage}{0.45\textwidth}
        \centering
        \includegraphics[width=0.9\textwidth]{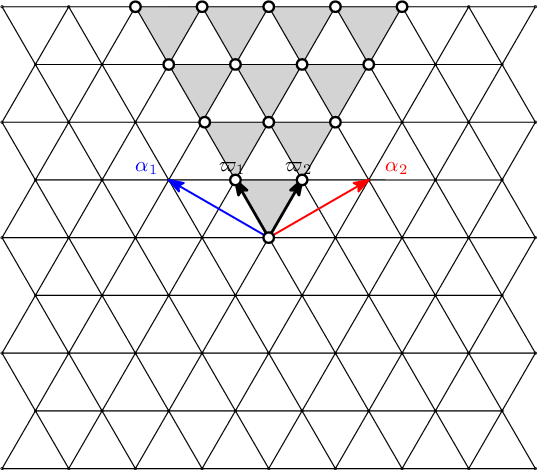} % first figure itself
        \caption{}
        \label{fig:tetha}
    \end{minipage}\hfill
    \begin{minipage}{0.45\textwidth}
        \centering
        \includegraphics[width=0.9\textwidth]{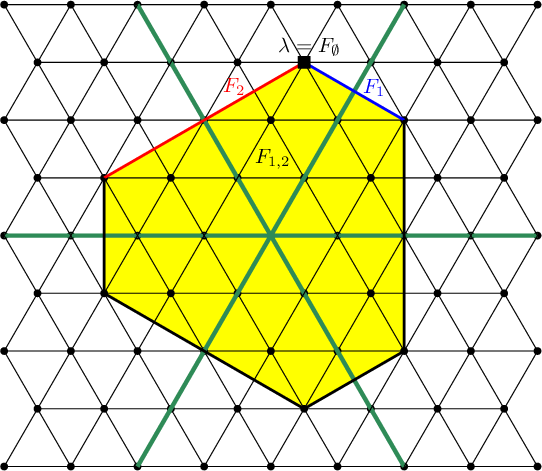} % second figure itself
        \caption{}
    \end{minipage}
\end{figure}
For $x\in W$ we will denote $ \leq x:=\{w\in W\, \vert \, w\leq x \}$.
In Figure 3  we draw the set $\leq \theta(\lambda)$ (with $\lambda$ as before).
It is the union of all the colored sets. 

\vspace{0.5cm}

\begin{figure}[ht]
    \centering
    \begin{minipage}{0.45\textwidth}
        \centering
        \includegraphics[width=0.9\textwidth]{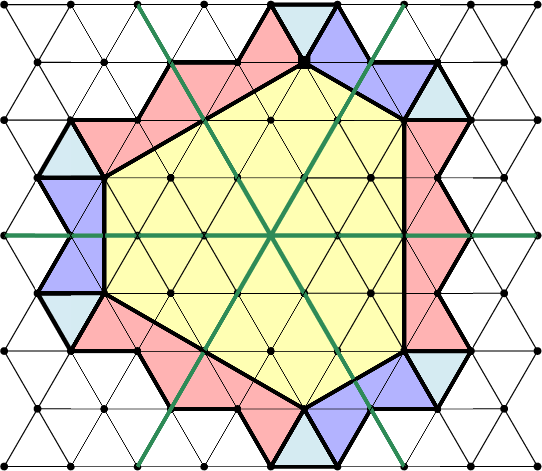} % first figure itself
        \caption{}
    \end{minipage}\hfill
    \begin{minipage}{0.45\textwidth}
        \centering
        \includegraphics[width=0.9\textwidth]{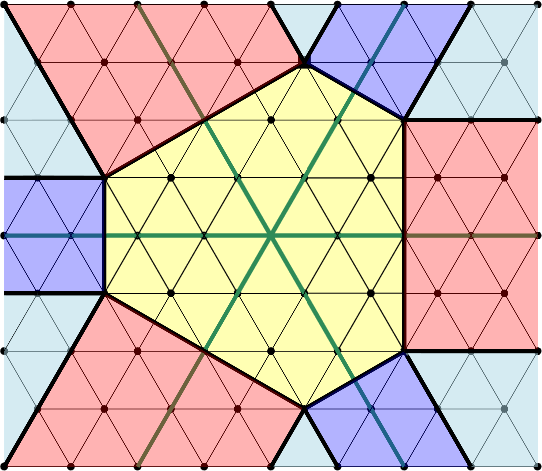} % second figure itself
        \caption{}
    \end{minipage}
\end{figure}

\vspace{0.5cm}

Let us suppose only in this introduction, to simplify the formulas, that the volume of one alcove is $1$, so that the volume of $\leq \theta(\lambda)\subset \mathbb{R}^2$ is equal to the cardinality of $\leq \theta(\lambda)\subseteq W$.
Our initial observation was that there are four real numbers $\mu_{J}$ (independent of $\lambda$) with $J\subseteq \{1,2\}$ such that 
\begin{equation}\label{babypick}
    \vert \leq \theta(\lambda) \vert = 
\mu_{1,2}\mathrm{Area}(F_{1,2})+\mu_{1}\mathrm{Length}(F_{1})+\mu_{2}\mathrm{Length}(F_{2})+\mu_{\emptyset}\mathrm{Card}(F_{\emptyset}).\end{equation}

%The crucial part of this formula is that the $\mu$'s are independent of $\lambda.$ 

\begin{remark}
   The reader may notice that the formula presented here bears strong similarities to Pick's theorem (see \eqref{eq: pick}). For the proof of Theorem \ref{geofor}, a generalization of the formula \ref{babypick} applicable to any root system, we use a generalized version of Pick's theorem developed by Berline and Vergne. For more details see Example \ref{Pick}.
\end{remark}

In Figure 4 there is a  partition of the plane into 13 parts, one for each face of $Y_\lambda$, such that when intersected with $\leq \theta(\lambda)$, one obtains Figure 3.
That division can be done for any convex polytope and is called the set of normal cones. For $C$ a face of $Y_\lambda$ we call $\mathrm{Nor}(C)$ the corresponding region. 

The low-rank accidents behind the formula, mentioned in Section \ref{gen}, are the following. First, that   $$Y_\lambda\  \subseteq \ \ \leq \theta(\lambda)\   \subseteq \  \mathbb{R}^2.$$
Second, that in Figure 3 the number $\mu_{1,2}\mathrm{Area}(F_{1,2})$
is the volume of the yellow part,  
$\mu_{1}\mathrm{Length}(F_{1})$ is the volume of the blue part,  $\mu_{2}\mathrm{Length}(F_{2})$ is the volume of the red part and finally that $\mu_{\emptyset}\mathrm{Card}(F_{\emptyset})$
is the volume of the light blue part. In general, it is that $\mu_{J}\mathrm{Vol}(F_{J})$ is the volume of $W_f\cdot (\mathrm{Nor}(F_J)\, \, \cap\,   \leq \theta(\lambda))$.
%that $\mu_J$ is the volume of $$\mathrm{Nor}(P_J)\, \, \cap\, \,  \leq \theta(\lambda)$$ divided by $\mathrm{Vol}(P_J).$
%In our example, $\mu_{1,2}=1$ is the volume of the yellow hexagon divided by itself. The constant $\mu_2$ is the volume of the red crown above $P_2$ divided by the volume of $P_2$, etc. 

Although these low-rank accidents are correct for $\widetilde{A}_2$ and $\widetilde{A}_3$, they fail in higher ranks.
The first one is not true already for $\widetilde{A}_4$.
The second one fails in $\widetilde{A}_{24},$ because there is a $\mu_J<0,$ (see Remark \ref{remarkfinal}) so $\mu_{J}\mathrm{Vol}(F_{J})$ can not be  the volume of some set.
%\footnote{One possibility to fix the second intuition  is   that $\mu_{J}\mathrm{Vol}(F_{J})$ is the volume of $W_f\cdot \big(\mathrm{Nor}(F_J)\, \, \cap\,   \leq \theta(\lambda)\big)$ minus the volume of $W_f\cdot (\mathrm{Conv}(W_f\cdot \lambda)-\leq \theta(\lambda))$}.

\subsection{Results}
For any root system $\Phi$ one has an associated affine Weyl group $W_a$ and one can define similar  concepts as the ones defined in last section.  For example, $\theta(\lambda)$ corresponds to the alcove touching $\lambda$ in the direction of $\rho$ (the sum of the fundamental weights). The following theorem builds the bridge between Coxeter combinatorics and convex geometry.
%Consider the polytope $\mathrm{Conv}(W_f\cdot \lambda)$. 

\begin{lettertheorem}[Lattice Formula] \label{flf} For every dominant coweight $\lambda$, we have  $$\vert\leq\theta(\lambda)|=|W_f| \ |\mathrm{Conv}(W_f\cdot \lambda)\cap (\lambda+\mathbb{Z}\Phi^\vee)|.$$
\end{lettertheorem}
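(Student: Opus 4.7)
The plan is to prove the formula by exhibiting an explicit bijection between the elements of $\leq\theta(\lambda)$ and pairs $(\mu,v)$ with $\mu\in\mathrm{Conv}(W_f\cdot\lambda)\cap(\lambda+\mathbb{Z}\Phi^\vee)$ and $v\in W_f$. Geometrically, the alcoves of the interval should organize into $|W_f|$-sized \emph{boxes}, one per lattice point of the polytope $Y_\lambda:=\mathrm{Conv}(W_f\cdot\lambda)$: the $|W_f|$ alcoves incident to a lattice point $\mu$ (viewed as a translate of the special vertices of the affine Weyl group) should form one such box, and together these boxes should partition $\leq\theta(\lambda)$.

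Using the semidirect-product structure of the (extended) affine Weyl group, every element is uniquely of the form $t_\nu v$ with $v\in W_f$ and $\nu$ in the appropriate lattice coset; define $B_\mu:=\{t_\mu v:v\in W_f\}$. Distinct $\mu$ manifestly yield disjoint boxes of cardinality $|W_f|$, so the theorem reduces to the set equality
$$
\{x\,:\,x\leq\theta(\lambda)\}=\bigsqcup_{\mu\in Y_\lambda\cap(\lambda+\mathbb{Z}\Phi^\vee)} B_\mu.
$$
Writing $\theta(\lambda)=t_\lambda w_0$ with $w_0$ the longest element of $W_f$, this is in turn equivalent to the following key characterization: for $\mu\in\lambda+\mathbb{Z}\Phi^\vee$ and $v\in W_f$,
$$
t_\mu v\,\leq\,t_\lambda w_0\qquad\Longleftrightarrow\qquad \mu\in Y_\lambda,
$$
notably \emph{independently} of $v$.

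The $(\Leftarrow)$ direction factors as $t_\mu v\leq t_\mu w_0=\theta(\mu)\leq t_\lambda w_0=\theta(\lambda)$. The first inequality reflects that the Bruhat order restricted to a single box $B_\mu$ agrees with the Bruhat order on $W_f$, with $w_0$ maximal -- a direct computation with reduced expressions. The second inequality, $\theta(\mu)\leq\theta(\lambda)$ for $\mu\in Y_\lambda$, is a classical feature of the affine Bruhat order on translation elements, closely related to the Kottwitz--Rapoport admissibility criterion and to the Mirkovi\'c--Vilonen picture in the affine Grassmannian. A direct proof exhibits a reduced expression for $\theta(\mu)$ as a subword of a canonical reduced expression for $\theta(\lambda)$, built for instance from alcove walks.

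The $(\Rightarrow)$ direction is the main obstacle: given $x=t_\mu v\leq\theta(\lambda)$, one must deduce $\mu\in Y_\lambda$. A natural line of attack is induction on $\ell(\theta(\lambda))-\ell(x)$, observing that each downward Bruhat step modifies the translation part by a reflection along a positive coroot. The geometric input is that $Y_\lambda$ is stable under precisely these operations -- equivalently, $Y_\lambda=\{\mu:w\mu\preceq\lambda \text{ in dominance, for all }w\in W_f\}$. The anticipated difficulty lies in tracking the translation part uniformly across different Weyl chambers, where the dominance description of $Y_\lambda$ interacts nontrivially with $W_f$-symmetry; combining this closure property with the subword property of the affine Bruhat order should close the argument.
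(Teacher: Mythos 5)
Your picture is the right one: the alcoves in $\leq\theta(\lambda)$ organize into $|W_f|$-sized blocks, one around each point of $\mathrm{Conv}(W_f\cdot\lambda)\cap(\lambda+\mathbb{Z}\Phi^\vee)$, and the paper's Proposition \ref{prop: to prove LF} establishes exactly this decomposition. But your formalization of the block and your key reduction contain gaps that the paper handles more carefully through double cosets.

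First, a bookkeeping problem. You identify $\theta(\lambda)=t_\lambda w_0$ and define $B_\mu=\{t_\mu v:v\in W_f\}$, but when $\lambda\notin\mathbb{Z}\Phi^\vee$ the element $t_\lambda w_0$ lies in $W_e\setminus W_a$ (it differs from $\theta(\lambda)\in W_a$ by a nontrivial element of $\Omega$), and $B_\mu\cap W_a=\emptyset$ for $\mu\in\lambda+\mathbb{Z}\Phi^\vee$. So the asserted partition $\leq\theta(\lambda)=\bigsqcup_\mu B_\mu$ is a set equality between a subset of $W_a$ and a disjoint union of subsets of $W_e\setminus W_a$. The correct ``block'' around a dominant $\mu$ in the coset $\sigma+\mathbb{Z}\Phi^\vee$ is the left coset $\theta(\mu)W_\sigma$, where $W_\sigma=\sigma W_f\sigma^{-1}$; its alcoves are the ones touching $\mu$ (Lemma \ref{lemmafirstlat}\eqref{dos}), but its elements are not of the form $t_\mu v$. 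You also invoke $\theta(\mu)$ for $\mu$ ranging over all of $Y_\lambda\cap(\lambda+\mathbb{Z}\Phi^\vee)$, while $\theta(\mu)$ is only defined for dominant $\mu$. These can be repaired by working at the level of alcoves (as the paper does in Equation \eqref{leqthetaalcoves}) and by left-$W_f$-invariance of $\leq\theta(\lambda)$, but as written the reduction to the claimed biconditional is not valid.

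Second and more substantively, your $(\Rightarrow)$ direction is not a proof but a program, and the crucial step does not hold in the form you state it. You propose an induction on $\ell(\theta(\lambda))-\ell(x)$ using that ``each downward Bruhat step modifies the translation part by a reflection along a positive coroot'' and that ``$Y_\lambda$ is stable under precisely these operations.'' If $x=t_\nu w$ and one applies $s_{\alpha,k}$ on the left, the translation part becomes $s_\alpha\nu+k\alpha^\vee$, and there is no reason for this to remain in $Y_\lambda$ without a delicate analysis of the length constraint $\ell(s_{\alpha,k}x)<\ell(x)$ — in particular, ``$Y_\lambda$ is $W_f$-stable'' is not the statement you need, since adding $k\alpha^\vee$ is not a $W_f$-action. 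The paper sidesteps this entirely: it shows $\theta(\lambda)$ is maximal in its double coset $W_f\theta(\lambda)W_\sigma$ (Lemma \ref{lemmafirstlat}), deduces via the Lifting Property that $\leq\theta(\lambda)$ is a union of $(W_f,W_\sigma)$-double cosets, and then uses the known order-isomorphism \eqref{biyec} between $(\sigma+\mathbb{Z}\Phi^\vee)\cap(\Lambda^\vee)^+$ with the dominance order and $W_f\backslash W_a/W_\sigma$ with the Bruhat order. That last input, cited from \cite{PreCanonical}, is exactly the ``admissibility'' statement your sketch is trying to reprove; without either citing it or supplying a genuine argument for the $(\Rightarrow)$ implication, the proof is incomplete.
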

This formula is a key step to prove our main theorem below but it is also interesting in its own right, as we now explain. 
In \cite{postnikov2009permutohedra} Postnikov studied permutohedra of general types.
Among them, one of the most remarkable is the regular permutohedron of type $A_n$.
The number of integer points of that polytope can be interpreted \cite[Section 3]{stanley1980decompositions} as the number of forests on $\{1,2,\ldots, n\}$.
There are other interpretations for the integer points of the regular permutohedron of type $A_n$, for instance, \cite[Proposition 4.1.3]{backman2019geometric} gives one as certain orientations of the complete graph. We remark that these interpretations are only for the regular permutohedron of type $A_n.$
%In \cite[Section 11]{postnikov2009permutohedra} there are formulas for the number of integer points of a family of \emph{generalized permutohedra}. 
For non-regular permutohedra of any type, before the present paper, there was no interpretation of the integer points.  Theorem \ref{flf} gives a first interpretation of this sort, and it is also of a different nature than the pre-existent ones in that it is not related to graph theory but to Coxeter theory. 

 \newpage

This theorem also gives an interesting new insight. For a generic permutohedron (i.e. the convex hull of $\mathrm{Conv}(W_f\cdot \lambda)$ for some $\lambda \in \bbZ_{>0}\varpi_1+\bbZ_{>0}\varpi_2$), the set of vertices is in bijection with the finite Weyl group $W_f=\{w\leq_R w_0\}$ where $\leq_R$ is the right weak Bruhat order on $W_f$ and $w_0$ is the longest element.
The Hasse diagram of $\leq_R$ on $\{w\leq_R w_0\}$ corresponds to the graph of the polytope.

Theorem \ref{flf} (or, more precisely Proposition \ref{prop: to prove LF}) says that if we consider the strong Buhat order, the set $\leq \theta(\lambda)$  can be obtained from the lattice points inside the polytope.  Heuristically, the weak Bruhat order gives the vertices of the polytope and the strong Bruhat order gives the lattice points inside the polytope.

Now we can present the main result of this paper. 
For $J\subseteq \{1,2,\ldots, n\},$  one can define the face $F_J=\mathrm{Conv}(W_J\cdot \lambda)$. 
See section \ref{prelim} for more details. 

\begin{lettertheorem}[Geometric Formula]\label{geofor}
 For every rank 
$n$ root system $\Phi$, there are unique ${\mu_J^{\Phi}\in\mathbb{R}}$ such that for any dominant coweight $\lambda$,
$$|\leq\theta(\lambda)|=\sum_{J\subset \{1,\ldots,n\}}\mu_J^{\Phi}\mathrm{Vol}(F_J),
$$
\end{lettertheorem}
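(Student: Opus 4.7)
The plan is to combine Theorem~\ref{flf} (the Lattice Formula) with the Berline--Vergne generalization of Pick's theorem. Theorem~\ref{flf} reduces the task to expressing
$$N(\lambda)\;:=\;\big|\mathrm{Conv}(W_f\cdot\lambda)\cap(\lambda+\bbZ\Phi^\vee)\big|$$
as a linear combination of face volumes, since multiplication by $|W_f|$ then recovers $|{\leq}\theta(\lambda)|$. The Berline--Vergne construction associates to each rational polyhedral cone $C$ a canonical real number $\mu_{\mathrm{BV}}(C)$ such that for any rational polytope $P$ with respect to a lattice $L$,
$$|P\cap L| \;=\; \sum_{F\text{ face of }P}\mu_{\mathrm{BV}}\big(\mathrm{tcone}(F)\big)\,\mathrm{relVol}(F),$$
where $\mathrm{tcone}(F)$ is the transverse cone of $P$ along $F$ and $\mathrm{relVol}$ is normalized with respect to the sublattice parallel to $F$. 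Applying this to the generalized permutohedron $P_\lambda:=\mathrm{Conv}(W_f\cdot\lambda)$, which is rational with respect to $\lambda+\bbZ\Phi^\vee$, yields an expansion of $N(\lambda)$ indexed by all faces of $P_\lambda$.

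Next I would exploit $W_f$-symmetry. Every face of $P_\lambda$ is of the form $w\cdot F_J$ for a unique $J\subseteq\{1,\ldots,n\}$ and coset $w\in W_f/W_J$; since both $\mathrm{relVol}$ and $\mu_{\mathrm{BV}}$ are $W_f$-invariant, collecting each orbit of size $|W_f|/|W_J|$ yields
$$N(\lambda) \;=\; \sum_{J\subseteq\{1,\ldots,n\}}\frac{|W_f|}{|W_J|}\,\mu_{\mathrm{BV}}(F_J)\,\mathrm{relVol}(F_J).$$
Multiplication by $|W_f|$ produces Theorem~\ref{geofor} with $\mu_J^\Phi = |W_f|^2|W_J|^{-1}\mu_{\mathrm{BV}}(F_J)$. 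The decisive point is that the transverse cone of $P_\lambda$ along $F_J$ is a fixed cone of the Coxeter fan, depending only on $\Phi$ and $J$; hence $\mu_J^\Phi$ is independent of $\lambda$, as required.

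The main obstacle will be uniqueness of the $\mu_J^\Phi$, which amounts to linear independence of the family $\{\lambda\mapsto\mathrm{Vol}(F_J(\lambda))\}_J$ on the dominant cone. Because $\mathrm{Vol}(F_J(t\lambda))=t^{|J|}\mathrm{Vol}(F_J(\lambda))$, subsets of distinct cardinality are separated by homogeneity degree; within a fixed value of $|J|$, one can further separate terms by letting $\lambda$ approach successive walls of the dominant chamber so that selected faces collapse, then inducting on the rank. A secondary subtlety is that for $\lambda$ on a wall some $F_J$ degenerate to lower dimension; this is handled by the polynomial character of the resulting identity together with the continuous dependence of $\mathrm{relVol}$ on $\lambda$.
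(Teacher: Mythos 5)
Your overall strategy — combining the Lattice Formula with the Berline--Vergne local formula, collecting the sum over faces by $W_f$-orbits indexed by $J\subseteq I_n$, and noting that the transverse cones depend only on $J$ and $\Phi$ — is exactly the route the paper takes. The orbit-collection step requires a bit more care than your sketch suggests (the faces $wF_J(\lambda)$ and $F_J(\lambda)$ differ by the orthogonal map $w$ \emph{and} by the lattice translation $w\lambda-\lambda$; the paper's Lemma~\ref{lem:orbit_nu} tracks this), but that is a matter of rigor rather than idea.

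The genuine gap is in your treatment of non-generic $\lambda$, i.e., dominant coweights lying on walls. Your whole Berline--Vergne/orbit argument rests on Corollary~\ref{cor:generic}: for generic $\lambda$ the faces of $\mathsf{P}^\Phi(\lambda)$ are indexed by $W_f/W_J$ for $J\subseteq I_n$, with $F_J$ of dimension $|J|$. When $\lambda$ has nontrivial vanishing set the face lattice collapses, the bijection with subsets fails, and the expansion you wrote down is no longer what Berline--Vergne gives you. You propose to repair this by ``the polynomial character of the resulting identity together with the continuous dependence of $\mathrm{relVol}$ on $\lambda$,'' but this does not close the gap: the left-hand side $|{\leq}\theta(\lambda)|$ is a function on lattice points only — it is not continuous, and a priori it could be an arbitrary function on the boundary of the dominant cone. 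To extend from generic $\lambda$ to all $\lambda$ you need an independent a priori regularity statement for the left-hand side. The paper supplies this via Proposition~\ref{prop:quasi-polynomiality}: expressing $\mathsf{Q}^\Phi(\lambda)$ as a Minkowski sum $\sum m_i \mathsf{Q}^\Phi(\varpi_i^\vee)$ of rational polytopes and invoking McMullen's theorem, one learns that $|{\leq}\theta(\lambda)|$ is a \emph{multivariate quasipolynomial} in $m_1,\dots,m_n$. Then the elementary but necessary Lemma~\ref{lem:aux} (a quasipolynomial vanishing on $\mathbb{Z}_{>0}^n$ is identically zero) lets you conclude that the polynomial identity established for generic $\lambda$ in fact holds on all of $\mathbb{Z}_{\geq 0}^n$. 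Without some substitute for this quasipolynomiality step your argument only proves the theorem for generic dominant coweights.

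A secondary remark: your uniqueness argument (``let $\lambda$ approach walls so selected faces collapse, then induct on rank'') is not as clean as it sounds, because $V_J^\Phi$ does not vanish when a single coordinate $m_i$ with $i\in J$ goes to zero — it is homogeneous of degree $|J|$ in the $J$-coordinates but can contain monomials like $m_{j}^{|J|}$ for other $j\in J$. The paper instead proves uniqueness by isolating the coefficient of the square-free monomial $m_J=\prod_{j\in J}m_j$, showing it vanishes in $V_K^\Phi$ for $K\neq J$ and is strictly positive in $V_J^\Phi$ (Lemma~\ref{lem:squarefree-coefficients}, Corollary~\ref{cor:independent}). Your approach could likely be repaired, but the square-free-coefficient trick is the cleaner path.
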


Theorem \ref{geofor} is proved by combining Theorem \ref{flf} with a particular formula for computing the number of lattice points developed by Berline-Vergne \cite{berline2007local} and Pommersheim-Thomas \cite{pommersheim2004cycles}.
The construction we use is part of a bigger family of formulae relating the number of lattice points of a polytope with the volumes of its faces, see \cite[Section 6]{barvinok1999algorithmic}.

In  \cite{postnikov2009permutohedra}, Postnikov gives several formulas for the volumes $\mathrm{Vol}(F_J)$ for any $\Phi$. When $\Phi$ is the root system of type $A_n$, we give in Section \ref{mu} formulas for $\mu_J^{\Phi}$ if $J$ is connected. For example,  
if $J=\{1,\ldots, l\}$, then 
\begin{equation} \label{eq: intro geo coeff}
\mu_J^{A_n}=\frac{l!}{\sqrt{l+1}}(n+1)\left[
\begin{matrix}
n+1 \\
l+1 \\
\end{matrix}
\right],   
\end{equation}
where the square bracket at the right means the Stirling number of the first kind.

The volumes are polynomials in the coordinates $m_1, \dots, m_n$ of $\lambda$ in the coweight basis.
As a consequence of Theorem \ref{geofor} we obtain that the size of the lower Bruhat intervals generated by $\theta(\lambda)$ is a polynomial function on the coordinates of $\lambda$.

\vspace{0.2cm}

\textbf{Perspectives:} As mentioned in the abstract, by some robust computational evidence we believe that one can partition the affine Weyl group \footnote{Technically one should be able to partition the affine Weyl group minus the set of parabolic subgroups isomorphic to the  finite Weyl group. The problem of finding cardinalities of intervals in the finite Weyl group seems to be a different kind of beast. Indeed, in the paper \cite[Thm 1.4]{dittmer2018counting} it is proven that the computation of lower intervals with respect to the weak Bruhat order is  $\#$P-complete, so for the weak Bruhat order no such partition of the finite Weyl group should be possible.}, with one of the parts being $\{\theta(\lambda)\}_{\lambda\in X^+},$ such that in each of these parts one will have a formula similar to that in Theorem \ref{geofor} but with different coefficients.

For the sets $\ngeq x:=\{w\in W\, \vert\, w\ngeq x\}$ we possess limited computational evidence, with the exception of the treacherous case of 
$\widetilde{A}_2.$ However, this does not prevent us from dreaming that a similar phenomenon (at least the polynomiality part) may occur for these sets, that are similar to star-shaped non-convex polytopes. If this was true, we would likely be on the verge of producing a formula for any interval, as $$\vert [x, y] \vert= \vert \leq y\vert- \vert\ngeq x\vert+P(x,y),$$ where $P(x,y)$ denotes the number of alcoves within a collection of simplices, easily computable in practice.

\subsection{Structure of the paper} Section 2 contains a recollection of definitions concerning affine Weyl groups and alcove geometry. Additionally, we establish the maximality of $\theta(\lambda)$ within a suitable double coset and provide the normalization for the volumes used in this paper. In Section 3 we present the proof of the lattice formula. Section 4 focuses on proving various results concerning the volumes of the $F_J$. These findings are then
employed to establish the Geometric formula in Section 5,
while in Section 6, we compute the $\mu_J^{A_n}$ for connected $J$. This last result relies on a formula by Luis Ferroni \cite{Ferroni}.

\subsection{Acknowledgments} 
We would like to thank Gaston Burrull, St\'ephane Gaussent, Mar\'ia In\'es Icaza, Jos\'e Samper, Joel Kamnitzer, Anthony Licata and Geordie Williamson for their helpful comments. Thanks to Daniel Juteau for helping with the redaction. Special thanks to Leonardo Patimo for some important insights. 
FC was partially supported by FONDECYT-ANID grant 1221133. 
NL was partially supported by FONDECYT-ANID grant 1230247. 
DP was partially supported by FONDECYT-ANID grant 1200341.

\section{Preliminaries}\label{prelim}

In this section we introduce the essential objects needed to state the Lattice Formula and the Geometric Formula. 
We refer to \cite{Bour46,humphreys1992reflection}  for more details about Weyl groups and to \cite{ziegler2012lectures} for more information about polytopes.

%for smooth elements, one can show that the BP-decomposition of w corresponds to the interval [e,w] decomposing as a poset product.  So card[e,w] could be calculated using a recursive product.  Bjorner-Brenti's book on Coxeter groups has some general results such as card[e,w]\leq 2^{l(w)} (Cor 2.24).  Also, if u\leq w and l(w)-l(u)=2, then card[u,w]=4.   If l(w)-l(u)=3, then you can find Coxeter groups where card[u,w] is as big you want (Cor 2.7.8).

\subsection{Affine Weyl groups} %estamos usando las convenciones de Humphreys 

Let $\Phi$ be an irreducible (reduced, crystallographic) root system of rank $n$, and let $V$ be the ambient (real) Euclidean space spanned by $\Phi$, with inner product ${(-,-)}: V \times V \to \mathbb{R} $.
Let $I_n\coloneqq\{1,\ldots,n\}$.

We fix a set ${\Delta=\{\alpha_i\mid i\in I_n\}}$ of simple roots and $\Phi^+$ is the corresponding set of positive roots.
Let ${\alpha^\vee=2\alpha/(\alpha,\alpha)}$ be the coroot corresponding to ${\alpha\in\Phi}$. 
The \textit{fundamental coweights} ${\varpi_i^\vee}$ are defined by the equations ${(\varpi_i^\vee,\alpha_j)=\delta_{ij}}$. 
They form a basis of $V$. 
A \textit{coweight} is an integral linear combination of the fundamental coweights, and a \textit{dominant coweight }is a coweight whose coordinates in this basis are non-negative. 
We denote by $\Lambda^\vee$ and ${(\Lambda^\vee)^+}$ the set of coweights and dominant coweights, respectively.  We define
\begin{equation*}
   C^+= \{ \lambda \in V \, | \, (\lambda,\alpha_i) \geq 0, \, \mbox{ for all }  i\in I_n  \}.
\end{equation*}
We refer to $C^+$ as the \textit{dominant region}.
We notice that ${(\Lambda^\vee)^+}= \Lambda^\vee \cap C^+  $. 
 
We denote by $\leq$ the \textit{dominance order} on $\Lambda^\vee$, that is, ${\mu\leq\lambda}$ if ${\lambda-\mu}$ can be written as a non-negative integral linear combination of simple coroots.

Let $H_{\alpha}$ be the hyperplane of $V$ orthogonal to a root $\alpha$, and let $s_\alpha$ denote the reflection through $H_{\alpha}$. 
For $\alpha_i\in \Delta$ we write $s_i=s_{\alpha_i}$. 
The group $W_f$ of orthogonal transformations of $V$ generated by $S_f=\{s_i\mid i\in I_n\}$ is the \textit{(finite) Weyl group} of $\Phi$.
It is a Coxeter system with generators $S_f$,  length function $\ell$ and Bruhat order $\leq$.  
We denote by $w_0$ the longest element of $W_f$. 

We also consider the \textit{affine Weyl group} $W_a$. 
It is the group of affine transformations of $V$ generated by $W_f$ and translations by elements of ${\mathbb{Z}\Phi^\vee}$, where ${\Phi^\vee}$ is the coroot system. 
We have ${W_a\cong \mathbb{Z}\Phi^\vee \rtimes W_f }$. The group $W_a$ can also be realized as the group generated by the affine reflections $s_{\alpha,k}$ along the hyperplanes 
\begin{equation*}
    H_{\alpha,k}=\{\lambda\in V\mid (\lambda,\alpha)=k\}, \text{ where } \alpha\in\Phi, k\in\mathbb{Z}.
\end{equation*}
Removing all these hyperplanes from $V$, leaves an open set whose connected components are called alcoves.
We choose the alcove
% Aca me confundi un poco, creo que falta definit ese Phi+
\begin{equation*}
    A_\text{id}\coloneqq\{\lambda\in V\mid -1<(\lambda,\alpha)<0, \ \forall\alpha\in\Phi^+\}
\end{equation*}
to be the \textit{fundamental alcove}. 
The map ${w\mapsto wA_\text{id}}$ defines a bijection between $W_a$ and the set of alcoves, so we define $ A_w := wA_\text{id}$ for each $w \in W_a$. 
We define the vertices of an alcove $A_w$ as the vertices of its closure $\overline{A_w}$.

The walls of $A_\text{id}$ are the hyperplanes $H_\alpha$ with ${\alpha\in\Delta},$ together with $H_{\tilde{\alpha},-1}$, where $\tilde{\alpha}$ is the highest root of $\Phi$.
We put ${s_0\coloneqq s_{\tilde{\alpha},-1}}$ and ${S\coloneqq S_f\cup\{s_0\}}$. 
Then the pair ${(W_a,S)}$ is a Coxeter system, with length function $\ell$ and 
Bruhat order $\leq$. 
For affine Weyl groups, we have a beautiful interpretation of the length function in terms of hyperplanes that separate a given alcove from the fundamental alcove.
More precisely, for $w\in W_a$ we have
\begin{equation} \label{separating hyp}
    \ell (w)  = \#\{ H=H_{\alpha,k } \mid  H \mbox{ separates } A_w \mbox{ from } \FA   \}.
\end{equation}

The \textit{extended affine Weyl }group,  $W_e$, is the subgroup of affine transformations of $V$ generated by $W_f$ and $\Lambda^\vee$ (acting as translations). 
We have ${W_e\cong \Lambda^\vee \rtimes W_f}$. 
In general, $W_e$ is not a Coxeter group. 
However, for every ${w\in W_e}$, ${w\FA}$ is still an alcove so that as in \eqref{separating hyp} one can define its length ${\ell(w)}$ by counting how many hyperplanes $H_{\alpha,k}$ separate $A_\text{id}$ and $wA_\text{id}$.

Let $\Omega$ be the subgroup of $W_e$ of length $0$ elements. 
Equivalently, $\Omega$ consists of the ${\sigma\in W_e}$  such that ${\sigma A_\text{id}=A_\text{id}}$. 
Thus the elements of $\Omega$ permute the walls of the fundamental alcove, so that conjugation by $\Omega$ permutes the simple reflections in $W_a$. 
% que dycks es el "completed" Dynkin diagram
In this way, $\Omega$ can be seen as a group of automorphisms of the corresponding completed Dynkin diagram. 
We define $W_\sigma\coloneqq\sigma W_f\sigma^{-1}$, which is isomorphic to $W_f$. 
We set ${s_\sigma\coloneqq \sigma s_0\sigma^{-1}}$, so that $W_\sigma$ is the maximal (finite) parabolic subgroup of $W_a$ generated by ${S\setminus\{s_\sigma\}}$. 

Other equivalent realization of this group is as a quotient: ${\Omega\cong\Lambda^\vee/\mathbb{Z}\Phi^\vee}$ (see \cite[\S 1.7]{IwahoriMatsumoto}). 
We will define a specific system of representatives of ${\Lambda^\vee/\mathbb{Z}\Phi^\vee}$.
Write the highest root as a combination of simple roots:
\begin{equation}
\label{eq:highest}
\tilde{\alpha}=\eta_1\alpha_1+\cdots+\eta_n\alpha_n.    
\end{equation}
One has that ${\eta_i\in\bbZ_{>0}}$.
%parabolic subgroup is not defined
For $i\in I_n$, it is not hard to check that the intersection of the reflecting hyperplanes corresponding to ${S\setminus\{s_i\}}$ 
 is ${v_i=-\varpi^\vee_i/\eta_i}$, for ${i\neq0}$, and $v_0=\mathbf{0}$ for $i=0$, where $\mathbf{0}$ is the origin of $V$. 
The set ${\{v_0,\ldots,v_n\}}$ is precisely the set of vertices of the fundamental alcove $A_\text{id}$. 
A fundamental coweight ${\varpi_i^\vee}$ is called \textit{minuscule} if ${(\varpi_i^\vee,\tilde{\alpha})=\eta_i=1}$. 
Let ${M\subset I_n}$ be the index set of the minuscule fundamental coweights. 
% es esto estandard? o estamos invocando cosas del paper de pre-canonical???
Both ${\{\mathbf{0},-\varpi_i^\vee\mid i\in M\}}$ and ${\{\mathbf{0},\varpi_i^\vee\mid i\in M\}}$ are complete systems of representatives of ${\Lambda^\vee/\mathbb{Z}\Phi^\vee}$.

It is known that for every ${\sigma\in\Omega\setminus\{\text{id}\}}$, the vector ${-\sigma(\mathbf{0})}$ is a minuscule fundamental coweight. 
Furthermore, ${\sigma\mapsto\sigma(\mathbf{0})}$ is a bijection from $\Omega$ to the representatives $\{\mathbf{0},-\varpi_i^\vee\mid i\in M\}$ of ${\Lambda^\vee/\mathbb{Z}\Phi^\vee}$ (see \cite[Prop~VI.2.3.6]{Bour46}).
Using the notation $v_i$ from the paragraph above, if $\sigma(\mathbf{0})=v_i$ with $\sigma\in\Omega$ then $s_\sigma=s_i\in S$, which is the unique simple reflection that does not fix $v_i$.
We will use this identification and put $\sigma$ instead of $\sigma(\mathbf{0})$, by abuse of notation.

The group $\Lambda^\vee$ contains $\mathbb{Z}\Phi^\vee$ as a subgroup of finite index, which is called the \textit{index of connection}. 
One can use it to compute the order of $W_f$ (see \cite[\S 4.9]{humphreys1992reflection}):
\begin{equation}\label{eq: wforder}
    |W_f|=n!\,\eta_1\cdots\eta_n\,[\Lambda^\vee:\mathbb{Z}\Phi^\vee].
\end{equation}

% \begin{definition}[Group elements $\theta(\lambda)$]\label{theta}
% Let $\lambda$ be a dominant coweight. 
% Since ${w_0A_\text{id}+\lambda}$ is an alcove, there exists a unique element ${\theta(\lambda)\in W_a}$ such that ${\theta(\lambda)A_\text{id}=w_0A_\text{id}+\lambda}$. 
% See Figure \ref{fig:tetha} for an example.

% These special elements can equivalently be described as the maximal length coset representatives of
% \begin{equation*}
%     \bigsqcup_{\sigma\in\Omega}W_f\backslash W_a/W_\sigma.
% \end{equation*}
% %Este maximal es un maximal diferente a lo que dice arriba (largo maximo)?

% An important property of ${\theta(\lambda)}$ is that it is maximal in its double coset, in the following sense. 
% Let ${\sigma\in\Omega}$ such that ${\lambda\in\sigma+\mathbb{Z}\Phi^\vee}$, under the identification we just described. 
% Then ${\theta(\lambda)}$ is the maximal element of its double coset ${W_f\,\theta(\lambda)W_\sigma}$.
% \end{definition}

\subsection{Maximal elements in double cosets}
In this section we introduce the main protagonists of this paper.
Namely, the elements $\theta(\lambda)$ for $\lambda \in (\Lambda^\vee)^+$.
We also study some of their properties. 

\begin{definition} \label{def: theta}
  Let $\lambda$ be a dominant coweight. 
Since ${A_{w_0}+\lambda}$ is an alcove, there exists a unique element ${\theta(\lambda)\in W_a}$ such that ${A_{\theta(\lambda)}=A_{w_0}+\lambda}$. 
See Figure \ref{fig:tetha} for an example.  
\end{definition}

% We will show that $\theta(\lambda)$ is the maximal element in its double coset ${W_f\,\theta(\lambda)W_\sigma}$, where $\lambda \in \sigma + \mathbb{Z}\Phi^{\vee}$. To do this we first need some preparatory results. 

For any subset $J \subset S$, the subgroup $W_J$ of $W_a$ generated by $J$ is called a \textit{parabolic subgroup}.
We identify subsets of $S$ with subsets of $\{0,1,\ldots,n\}$.
In the following lemma we record some useful facts about parabolic double cosets in $W_a$ (see \cite[Lemma 2.12]{SingularCoxeter}).

\begin{lemma}\label{lem: double cosets}
Let $I$ and $J$ be proper subsets  of $S$ and  $p$ in $W_I\backslash W_a/W_J$. Then, 
\begin{enumerate}[(i)]
\item $p$ is an interval. That is, there exist $\underline{p}, \bar{p} \in p$ such that $p=\{ x\in W\mid \underline{p} \leq x \leq \bar{p}  \}$.  In particular, $p$ has a longest element.
\item The longest element $\bar{p}\in p$ is uniquely determined by the conditions
\begin{itemize}
    \item $I\subset  \{  s\in S \mid \ell(s\bar p) <\ell(\bar p)\}$.
    \item $J\subset  \{  s\in S \mid \ell(\bar p s ) <\ell(\bar p)  \}$.
\end{itemize}
\end{enumerate}    
\end{lemma}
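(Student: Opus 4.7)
The plan is to exploit a structural feature special to affine Weyl groups: since $I, J \subsetneq S$, the standard parabolic subgroups $W_I$ and $W_J$ are both finite. This holds because removing any node from the extended Dynkin diagram of an affine Weyl group yields a diagram of finite type. Consequently, $p$ is a finite subset of $W_a$ and admits elements of maximal length, which is what makes the statement of the lemma meaningful.

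For part (i), I would argue by induction on $\ell(w_I)+\ell(w_J)$, where $w_I, w_J$ denote the longest elements of $W_I, W_J$. The base case $I=J=\emptyset$ is trivial. For the inductive step, pick $s\in I$ (a symmetric argument handles the case $I=\emptyset$, $J\neq\emptyset$) and decompose
\[
p = (W_{I\setminus\{s\}}\, w\, W_J) \cup (W_{I\setminus\{s\}}\, sw\, W_J),
\]
for any representative $w\in p$. Each piece is a double coset for a strictly smaller pair of parabolic subgroups and is therefore an interval by induction, with a well-defined minimum and maximum. A case analysis based on whether $s$ is a left descent of the maximum of each piece, combined with the lifting property of the Bruhat order, splices the two sub-intervals into a single Bruhat interval $[\underline{p},\bar{p}]$. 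This simultaneously yields the uniqueness of $\bar{p}$ and $\underline{p}$, and is essentially the argument behind the cited \cite[Lemma 2.12]{SingularCoxeter}.

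For part (ii), the forward direction is immediate: for $s\in I$ the element $s\bar{p}$ lies in $p$ and is distinct from $\bar{p}$, so by maximality of $\bar{p}$ we have $\ell(s\bar{p})<\ell(\bar{p})$; the analogous argument handles $t\in J$. For uniqueness, suppose $w\in p$ satisfies both descent conditions. Having every $s\in I$ as a left descent of $w$ is equivalent, by standard theory of parabolic quotients, to admitting a reduced factorization $w=w_I u$ with $\ell(w)=\ell(w_I)+\ell(u)$ and $u$ being $I$-reduced on the left; symmetrically $w=v w_J$ with $v$ being $J$-reduced on the right. A short subword-property argument using these two factorizations yields $y w z\leq w$ in Bruhat order for every $y\in W_I$ and $z\in W_J$. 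In particular, $w$ is a Bruhat upper bound for all of $p=W_I w W_J$, and since $w\in p$ itself, this forces $w=\bar{p}$.

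The main anticipated obstacle lies in part (i), where splicing the two sub-intervals via the lifting property requires careful case analysis on descent configurations. This is a routine but delicate induction in Coxeter group theory, already worked out in the cited reference, so in practice the proof reduces to quoting that result together with the short verification of (ii) sketched above.
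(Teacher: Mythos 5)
The paper does not actually prove this lemma: it states it and cites \cite[Lemma~2.12]{SingularCoxeter}, so the ``paper's proof'' is a reference. Your observation that $I,J\subsetneq S$ forces $W_I,W_J$ to be finite in the affine case is correct and worth recording (it is why $p$ has a longest element), and your forward direction of (ii) is fine. However, the inductive sketch you offer for (i) has a concrete error.

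The proposed decomposition
\[
p = \bigl(W_{I\setminus\{s\}}\, w\, W_J\bigr) \cup \bigl(W_{I\setminus\{s\}}\, sw\, W_J\bigr)
\]
does not cover $p=W_I w W_J$ in general, because $W_{I\setminus\{s\}}\cup W_{I\setminus\{s\}}\,s \neq W_I$ unless $s$ commutes with $I\setminus\{s\}$. Take $W_a$ of type $\widetilde A_2$, $I=\{s_1,s_2\}$ (so $m(s_1,s_2)=3$), $J=\emptyset$, $w=\mathrm{id}$, $s=s_1$. Then $p=W_I=\{e,s_1,s_2,s_1s_2,s_2s_1,s_1s_2s_1\}$, while your union is $\{e,s_2\}\cup\{s_1,s_2s_1\}=\{e,s_1,s_2,s_2s_1\}$, missing $s_1s_2$ and $s_1s_2s_1$. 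So the induction base of the splicing argument never reaches $p$. (A correct version would need to iterate the closure operation $X\mapsto X\cup sX$ for $s\in I$ and $X\mapsto X\cup Xt$ for $t\in J$ until stabilization, not a single two-piece split; and at each step one must verify the interval property is preserved, which is where the lifting property really enters.)

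The uniqueness step in (ii) also glosses over a genuine subtlety: from the left factorization $w=w_I u$ you get $yw\le w$ for $y\in W_I$, and from the right factorization $w=vw_J$ you get $wz\le w$ for $z\in W_J$, but concluding $ywz\le w$ is not an immediate consequence of these two facts, since Bruhat order is not preserved under one-sided multiplication. One needs the structure theory of double cosets (e.g.\ the reduced factorization $\bar p = w_I\cdot{}^I\! w^J\cdot w_J^K$ through the minimal representative and the subgroup $W_K=W_J\cap ({}^I\! w^J)^{-1}W_I\,{}^I\! w^J$) to make this precise. In short, the result is correct and the citation is appropriate, but the self-contained sketch as written would not compile into a proof.
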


\begin{definition}
 For any  ${X\subset W_a}$ we define
\begin{equation*}
    A(X)\coloneqq \bigsqcup_{x\in X}A_x.
\end{equation*}   
\end{definition}

\begin{lemma}\label{lemmafirstlat}
Let $\lambda$ be a dominant coweight and let ${\sigma\in\Omega}$ such that ${\lambda\in\sigma+\mathbb{Z}\Phi^\vee}$. 
Then,
\begin{enumerate}[(i)]
    \item \label{uno}${A(W_\sigma)=A(W_f)+\sigma}$.
    \item \label{dos}  ${A(\theta(\lambda)W_\sigma)=A(W_f)+\lambda}$.
    \item \label{tresA} $\theta(\lambda)$ is maximal with respect to the Bruhat order in its right coset $W_f\theta(\lambda) $. 
    \item \label{tres}  $\theta(\lambda)$ is maximal with respect to the Bruhat order in its left coset $\theta(\lambda) W_{\sigma}$. 
    \item \label{cinq}  $\theta(\lambda)$ is maximal with respect to the Bruhat order in its double coset $W_f\theta(\lambda) W_{\sigma}$.
   % \item \label{cuatro}  There exists ${x\in W_a}$ such that ${\theta(\lambda)=w_0x}$ and ${\ell(\theta(\lambda))=\ell(w_0)+\ell(x)}$. Similarly, there exists ${y\in W_a}$ such that ${\theta(\lambda)=yw_0^\sigma}$ and ${\ell(\theta(\lambda))=\ell(y)+\ell(w_0^\sigma)}$, where $w_0^\sigma$ denotes the longest element of $W_\sigma$.
\end{enumerate}
\end{lemma}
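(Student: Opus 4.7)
The plan is to establish (i) and (ii) by direct affine-geometric computation, and to deduce (iii)--(v) from Lemma \ref{lem: double cosets}(ii). The linchpin of the whole argument is the identity
\[
t_\lambda w_0 = \theta(\lambda)\,\sigma \qquad \text{in } W_e.
\]
To prove it, note that both $\theta(\lambda)$ and $t_\lambda w_0$ send $A_{\text{id}}$ to $A_{w_0}+\lambda$, so $\sigma' := \theta(\lambda)^{-1} t_\lambda w_0$ lies in $\Omega$. Computing $\sigma'(\mathbf{0}) = \theta(\lambda)^{-1}(\lambda)$ and writing $\theta(\lambda) = t_\nu w$ with $\nu \in \mathbb{Z}\Phi^\vee$ and $w \in W_f$, the fact that $W_f$ acts trivially on $\Lambda^\vee/\mathbb{Z}\Phi^\vee$ (since $s_\alpha(\mu)-\mu \in \mathbb{Z}\Phi^\vee$) gives $\sigma'(\mathbf{0}) \equiv \lambda \equiv \sigma(\mathbf{0}) \pmod{\mathbb{Z}\Phi^\vee}$, and uniqueness of representatives forces $\sigma' = \sigma$.

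For (i), from $W_\sigma = \sigma W_f \sigma^{-1}$ and $\sigma^{-1} A_{\text{id}} = A_{\text{id}}$ I obtain $A(W_\sigma) = \sigma(A(W_f))$. Writing $\sigma = t_{\sigma(\mathbf{0})} \circ w_*$ in $W_e = \Lambda^\vee \rtimes W_f$ and using the $W_f$-invariance of $A(W_f)$ yields $A(W_\sigma) = A(W_f) + \sigma(\mathbf{0})$. Part (ii) follows by the same manipulation combined with the key identity:
\[
A(\theta(\lambda) W_\sigma) = \theta(\lambda)(A(W_\sigma)) = (\theta(\lambda)\sigma)(A(W_f)) = t_\lambda w_0(A(W_f)) = A(W_f)+\lambda.
\]

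For (iii), by Lemma \ref{lem: double cosets}(ii) applied to $W_f \theta(\lambda)$ it suffices to verify $\ell(s_i\theta(\lambda)) < \ell(\theta(\lambda))$ for each $s_i \in S_f$, equivalently that $H_{\alpha_i}$ separates $A_{\text{id}}$ from $A_{\theta(\lambda)} = A_{w_0}+\lambda$; this is immediate from $A_{\text{id}} \subset \{(\cdot,\alpha_i)<0\}$ and $A_{w_0} \subset \{(\cdot,\alpha_i)>0\}$ together with $(\lambda,\alpha_i) \geq 0$. For (iv), for each $s \in S \setminus \{s_\sigma\}$ the wall between $A_{\theta(\lambda)}$ and $A_{\theta(\lambda)s}$ is $\theta(\lambda)(H_s)$; since such $s$ fixes the vertex $v_i = \sigma(\mathbf{0})$, the wall $H_s$ passes through $v_i$, hence $\theta(\lambda)(H_s)$ passes through $\theta(\lambda)(v_i) = t_\lambda w_0(\mathbf{0}) = \lambda$ by the key identity. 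Any hyperplane $H_{\alpha,(\lambda,\alpha)}$ through $\lambda$ with $\alpha \in \Phi^+$ separates $A_{\text{id}}$ (where $(\cdot,\alpha) < 0 \leq (\lambda,\alpha)$) from $A_{\theta(\lambda)}$ (where $(\cdot,\alpha) > (\lambda,\alpha)$), giving $\ell(\theta(\lambda)s) < \ell(\theta(\lambda))$. Finally, (v) follows by invoking Lemma \ref{lem: double cosets}(ii) for the double coset $W_f\theta(\lambda)W_\sigma$: the one-sided descent conditions checked in (iii) and (iv) are precisely the two conditions characterising its longest element.

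The main obstacle is the key identity: converting the element $\theta(\lambda)^{-1} t_\lambda w_0 \in \Omega$ into exactly $\sigma$ requires matching the two natural systems of representatives of $\Lambda^\vee/\mathbb{Z}\Phi^\vee$, which in turn rests on the triviality of the $W_f$-action on that quotient.
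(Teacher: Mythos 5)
Your proof is correct, and it genuinely differs from the paper's. The organizing device you introduce—the identity $t_\lambda w_0 = \theta(\lambda)\sigma$ in $W_e$, proved by pinning down an $\Omega$-element via the triviality of the $W_f$-action on $\Lambda^\vee/\mathbb{Z}\Phi^\vee$—does not appear in the paper, and it streamlines the argument considerably. For (i), the paper instead observes that $A(W_f)+\sigma$ is exactly the set of alcoves touching the vertex $\sigma$ and then closes the gap by a cardinality count, whereas you read the conclusion off the semidirect-product decomposition $\sigma = t_{\sigma(\mathbf{0})}w_*$. For (ii), the paper writes $\lambda=\sigma+\mu$ and shows $t_\mu^{-1}\theta(\lambda)\in W_\sigma$; you obtain it in one line from the key identity. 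For (iii), the paper constructs an injection $H\mapsto sH$ from hyperplanes separating $A_{\text{id}}$ from $A_{s\theta(\lambda)}$ to those separating $A_{\text{id}}$ from $A_{\theta(\lambda)}$; you invoke the wall-crossing criterion directly and verify it by a sign computation. For (iv), the paper runs a contradiction argument tracking hyperplanes through the shared vertex $\lambda$ of all alcoves in the translated orbit; you use the key identity to show each relevant wall $\theta(\lambda)(H_s)$ passes through $\lambda$ and again apply the wall-crossing criterion. Both approaches are sound and apply Lemma \ref{lem: double cosets} at the same points; yours trades the paper's hands-on alcove-counting for a cleaner algebraic lemma that makes parts (ii) and (iv) nearly immediate, at the modest cost of a short detour through $W_e$.
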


\begin{proof}\hfill
\begin{enumerate}[(i)]
    \item  It is known that the alcoves corresponding to $W_f$ are precisely the alcoves having the origin $\mathbf{0}$  as one of its vertices. Since $\sigma\in\Lambda^\vee$ (under the identification $\sigma\mapsto\sigma(\mathbf{0})$), we get                 \begin{equation*}
        A(W_f)+\sigma=\{\text{alcoves that have }\sigma\text{ as one of its vertices}\}.
    \end{equation*}
    Now let $w\in W_f$. Note that $\sigma w\sigma^{-1}A_\text{id}=\sigma wA_\text{id}$, so that ${A(W_\sigma)=\sigma(A(W_f))}$, by definition of $W_\sigma$. It follows that 
    \begin{equation*}
        A(W_\sigma)\subset\{\text{alcoves that have }\sigma\text{ as one of its vertices}\}.
    \end{equation*}
    That is, $A(W_f)+\sigma$ is a collection of $|W_f|$ alcoves containing $A(W_\sigma)$. Since $W_\sigma\cong W_f$, the set $A(W_\sigma)$ also has exactly $|W_f|$ alcoves. Thus ${A(W_\sigma)=A(W_f)+\sigma}$.
    
    \item Write ${\lambda=\sigma+\mu}$ for $\mu \in \mathbb{Z}\Phi^\vee$. Let  ${t_\mu\in W_a}$ be the translation by $\mu$. We notice that
\begin{equation*}
   A_{t_\mu^{-1}\theta(\lambda) } =  t_\mu^{-1}\theta(\lambda)(\FA) = t_\mu^{-1}\left(A_{w_0}+\lambda\right)=\left(A_{w_0}+\lambda\right)-\mu=A_{w_0}+\sigma.
\end{equation*}
It follows that $A_{t_\mu^{-1}\theta(\lambda)} \in A(W_f)+\sigma$.    By \eqref{uno} we conclude that ${t_\mu^{-1}\theta(\lambda)\in W_\sigma}$. Thus,
\begin{equation*}
    A(\theta(\lambda)W_\sigma) = A(t_\mu W_\sigma) = t_\mu A(W_\sigma) = A(W_\sigma)+\mu.
\end{equation*}

\item We will use \eqref{separating hyp} in order to show that $\ell(s\theta(\lambda)) < \ell (\theta(\lambda))$ for all $s\in S_f$. Notice that the claim then follows by applying Lemma \ref{lem: double cosets} for $I=S_f$ and $J=\emptyset$. 

We will prove that if there is a hyperplane $H_{\alpha, k}$ separating $A_{\text{id}} $  from  $A_{s\theta(\lambda)}$, then  $sH_{\alpha, k}$ separates $A_{\text{id}} $  from  $A_{\theta(\lambda)}$. Let $H=H_{\alpha, k}$ be a hyperplane that separates $A_{\text{id}} $  from  $A_{s\theta(\lambda)}$.  Suppose that $sH$ does not separate  $A_{\text{id}} $  from  $A_{\theta(\lambda)}$.  Then,  $sH$ separates $A_{\text{id}}$ from $A_{s}$. Since $H_{\alpha_s}$ is the unique hyperplane that separates $A_{\text{id}}$ from $A_{s}$, we conclude $sH=H=H_{\alpha_s} $.  However, since $A_{\theta(\lambda)}\subset C^+$ we know that $H_{\alpha_s}$ separates $A_{\text{id}} $  from  $A_{\theta(\lambda)}$. This contradiction proves our claim.

\item For $x\in W_f$ we denote by $x'$ the unique element 
$x'\in W_a$ such that $A_{x'} = A_x+\lambda$. 
We claim that 
$\ell(x')\leq \ell (w_0')$ for all $x\in W_{f}$. We will prove this by 
showing that each hyperplane $H_{\alpha,k}$ that separates $A_{x'}$ 
from $\FA$ must also separate $A_{w_0'}$ from $\FA$.
We proceed by 
contradiction. 
Suppose there is an hyperplane  $H'$ as above that separates $A_{x'}$
from $\FA$ but it does not separate $A_{w_0'}$ from $\FA$. 
Thus $H'$ separates $A_{w_0'}$ from $A_{x'}$, but these alcoves share the vertex $\lambda,$ so that $\lambda\in H'$.
Then, the hyperplane  $H=H'-\lambda$
passes through the origin and separates $A_{x}$  from $A_{w_0}$. As any $H_{\alpha,0}$ separates $A_{\text{id}}$ from $A_{w_0}$,
 the alcoves $A_{x}$ and $\FA$ are on the same side of $H$. 
Therefore, $A_{x'}$ and $A_{\text{id}'}$ are on the same side of $H'$. 
Since $\lambda$ is dominant, $A_{\text{id}}$ and $A_{\text{id}'}$  are 
on the same side of $H'$. Thus $A_{x'}$ and $A_{\text{id}}$ are on the 
same side of $H'$, which contradicts our choice of $H'$ and proves our 
claim. Since $\theta(\lambda) = w_0'$ we conclude from \eqref{dos} that $\ell (x') \leq \ell(\theta(\lambda))$ for all $x'\in \theta(\lambda) W_{\sigma}$. The result now follows by applying Lemma \ref{lem: double cosets} for $I=\emptyset$ and $J=S\setminus \{s_{\sigma}\}$.

\item This follows by combining Lemma \ref{lem: double cosets} for $I=S_f$ and $J=S\setminus \{s_{\sigma}\}$ together with \eqref{tresA} and $\eqref{tres}$. \qedhere
% We have seen that the double coset $W_f\theta(\lambda)W_\sigma$ has a maximal element. This element is completely determined by two conditions:
% \begin{itemize}
%     \item Its left descent set contains $S_f$,
%     \item Its right descent set contains $S\setminus\{s_\sigma\}$.
% \end{itemize}
% See \cite[Lemma 2.12]{SingularCoxeter}, for example.  By \eqref{tresA} and $\eqref{tres}$, $\theta(\lambda)$ satisfies the former and the latter condition, respectively.
\end{enumerate}
\end{proof}

\subsection{Polytopes and their volumes} \label{section poly and volumes }

It is a classic result that there exists a unique translation invariant measure $ \mu $ on $ \mathbb{R}^m $ up to scaling.
The Euclidean volume on $\mathbb{R}^m$ is a translation invariant measure $\vol_m$ normalized so that $ \vol_m([0,1]^m) = 1 $,
 where $[0,1]^m$ is the Cartesian product of $m$ unit segments, i.e. the unit cube.
In the present paper we assume that every vector space $V$ lives inside some $\mathbb{R}^m$ that comes equipped with an Euclidean volume.
The volume in $\mathbb{R}^m$ has the property that $|\det (v_1,\dots,v_m)| = \vol_m(\Pi_{v_1,\dots,v_m}),$ where $$\Pi_{v_1,\dots,v_m} = \{a_1v_1 + \dots + a_mv_m \in \mathbb{R}^m \mid  0\leq a_i \leq 1, \,  \forall\  1\leq i \leq m  \}$$ is the parallelepiped spanned by the vectors $\{v_1,\cdots,v_m\}$.

\begin{definition}
\label{def:lattice}
A lattice is a discrete subgroup $ \Gamma $ of $ V $ that spans $V$ as a vector space.
As a group, a lattice is always isomorphic to $ \mathbb{Z}^d $ where $d=\dim(V)$ \cite[Theorem 10.4]{barvinok2008integer}.
We define the \emph{determinant} of $\Gamma$ as the volume of $\Pi_{v_1,\dots,v_m}$ for any integral basis $v_1,\dots, v_m$ of $\Gamma$.
The determinant of $\Gamma$ does not depend on the integral basis \cite[Theorem 10.8]{barvinok2008integer}.
\end{definition}

\begin{example}
\label{ex:alcove_volume}
The alcove $A_{\text{id}}$ is a simplex with vertices $\mathbf{0},-\varpi^\vee_1/\eta_1,\dots,-\varpi^\vee_n/\eta_n$, where the numbers $\eta_i$ are defined in Equation \eqref{eq:highest}. 
Thus, we have
\begin{equation}
\label{eq:alcove_volume}
\vol (A_{\text{id}}) = \frac{\det(\Lambda^\vee)}{n!\eta_1\cdots \eta_n}.
\end{equation}
\end{example}

A priori the volume of a subset contained in a proper subspace of $\mathbb{R}^m$ is zero.
However we can consider an induced volume on any subspace $V$ as follows.
Let $\{v_1,\dots,v_k\}$ be a basis for the subspace $V$.
The Euclidean volume induces a measure $\vol_k$ (also called Euclidean volume, by abuse of notation) on $V$ defined by the property that
$\vol_k (\Pi_{v_1,\dots,v_k} )= \det(v_1,\dots,v_k,u_{k+1},\dots,u_m)$, where $\{u_{k+1},\dots,u_m  \}$ is an orthonormal basis for the orthogonal complement of $V$ in $\mathbb{R}^m$.

An important part of this paper focuses on the study of volumes of polytopes living in  the ambient space, $V$, of a given root system $\Phi$. In this setting, we embed $V$ inside some $\mathbb{R}^m$ by following the 
conventions outlined in \cite[Plates I,\ldots,VI]{Bour46}. In particular, we have an explicit description for $V$ within a specific $\mathbb{R}^m$, accompanied by explicit descriptions for roots, coroots, coweights, etc. In the following subsection, we give all the details in type A.

\subsubsection{Type A}
\label{ex:typeA}
Let $\Phi$ be a root system of type $A_n$. 
In this case $V$ is the hyperplane of $\mathbb{R}^{n+1}$ (with standard basis $\varepsilon_1,\ldots,\varepsilon_{n+1}$) of vectors whose coordinate sum is zero and
$  \Phi = \{  \varepsilon_i-\varepsilon_j  \mid 1\leq i , j \leq n+1, \, i\neq j   \}$.  
The simple roots are given by $\alpha_i = \varepsilon_i-\varepsilon_{i+1}$ with $1\leq i\leq n$, and the positive roots are the vectors $\varepsilon_i-\varepsilon_j$ with $1\leq i < j\leq n+1$.

In this example the parallelepiped $\Pi_\Delta \subseteq V$ spanned by the simple roots has $n$-Euclidean volume equal to
\small
\begin{equation}
\label{eq:vol_fundamental_root}
\det(\mathbb{Z}\Phi)=\vol_n(\Pi_\Delta) = 
\det\left|\begin{array}{cccccc}
1&-1&0&\cdots &0&0\\
0&1&-1&\cdots&0&0\\
\vdots&\vdots&\vdots&\ddots&\vdots&\vdots\\
0&0&\cdots&\cdots&1&-1\\
\frac{1}{\sqrt{n+1}}&\frac{1}{\sqrt{n+1}}&\frac{1}{\sqrt{n+1}}&\ldots &\frac{1}{\sqrt{n+1}}&\frac{1}{\sqrt{n+1}}  \\
\end{array}\right|=\frac{n+1}{\sqrt{n+1}} = \sqrt{n+1},
\end{equation}
\normalsize
as can be checked by using row operations to transform the last row into 
$$\left[0,\cdots,0,\frac{n+1}{\sqrt{n+1}}\, \right].$$

Since $(\alpha, \alpha) =2$ for all $\alpha \in \Phi$ , we have   $\alpha=\alpha^\vee$.
Therefore the fundamental weights and fundamental coweights coincide, and are given by
\begin{equation}
\label{eq:fundamenta_coweight}
    \varpi_{i} =  \varepsilon_1+\cdots+\varepsilon_i - \dfrac{i}{n+1}(\varepsilon_1+\cdots+\varepsilon_{n+1}).
\end{equation}
The volume of the fundamental alcove is $\sqrt{n+1}/(n+1)!$.
This follows from Example \ref{ex:alcove_volume} and a computation of $\det(\Lambda^\vee)$ using the coweights in Equation \eqref{eq:fundamenta_coweight}.

\subsubsection{Orbit polytopes}

A \emph{polytope} $ \mathsf{P} \subseteq V $	is the convex hull of finitely many points.
A \emph{supporting hyperplane} $H$ of a polytope $\mathsf{P}$ is an affine hyperplane such that $ \mathsf{P} \cap H \neq \emptyset $ and $ \mathsf{P} $ is contained in one of the two closed halfspaces defined by $ H $.
A \emph{face} $ \mathsf{F} \subseteq \mathsf{P} $ is the intersection of $ \mathsf{P} $ with a supporting hyperplane.
We also consider the whole polytope and the empty set to be faces.
Faces of dimension 0 and 1 are called \emph{vertices} and \emph{edges} respectively.
Faces of codimension 1 are called \emph{facets}.

\begin{definition}
\label{def:orbit_polytope}
Let $\Phi$ be an irreducible root system with simple roots $\Delta=\{\alpha_1,\dots, \alpha_n\}$ and let $ \lambda \in V$.
The orbit polytope $ \mathsf{P}^\Phi(\lambda) $ of an element  $ \lambda \in V$ is defined as the convex hull of the  $W_f$-orbit of $\lambda,$ i.e. $\conv\{ w\cdot \lambda \mid w\in W_f \} $.
Without loss of generality we always assume that $\lambda$ is in the dominant region $C^+$. 
If $\lambda =\mathbf{0}$ then $ \mathsf{P}^\Phi(\lambda) = \{  \mathbf{0}\} $. 
Otherwise, $ \mathsf{P}^\Phi(\lambda) $ is full dimensional. 
\end{definition}

Let $\lambda = m_1\varpi_1^\vee+\dots+ m_n\varpi_n^\vee \in C^+$.
%Sigue sin gustarme el nombre
We define the \emph{vanishing set} of $\lambda$ as 
$$Z(\lambda) :=\{j\in I_n \mid m_j = 0\}. $$
For $j\in I_n$ the element $s_j \in W_f$ fixes $\lambda$ if and only if $j \in Z(\lambda)$.
Furthermore, the stabilizer of $\lambda$ is the parabolic subgroup $W_{Z(\lambda)}$.
The face structure of the orbit polytopes depends on vanishing sets as the following theorem (proved in \cite[Corollary 1.3]{renner2009descent}) makes precise.

\begin{prop}
\label{prop:face_lattice}
Let $\Phi$ be a root system with Dynkin diagram $D.$ Let
$ \lambda $ be an element with vanishing set $ Z $.
There is a bijection between
\begin{enumerate}[(i)]
\item $ W_f $-orbits of $d$-dimensional faces of $ \mathsf{P}^\Phi(\lambda)$
\item Subsets $ J \subseteq \Delta $ with $|J|=d$ such that no connected component of $ D|_J $ is contained in $ D|_Z $. \label{itm:condition_K}
\end{enumerate}
\end{prop}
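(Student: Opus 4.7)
I propose the bijection $J \mapsto [\text{$W_f$-orbit of }F_J]$, where $F_J := \conv(W_J \cdot \lambda)$, and verify it via three structural claims.

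First, to show $F_J$ is a face of $\mathsf{P}^\Phi(\lambda)$, I would pick a vector $v$ in the relative interior of $C_J^+ := \{u \in C^+ : (u, \alpha_j) = 0 \text{ for } j \in J\}$ (for instance $v = \sum_{i \notin J} \varpi_i^\vee$). The dominance lemma gives $(w\lambda, v) \leq (\lambda, v)$ for every $w \in W_f$; writing $v - w^{-1}v$ as a non-negative combination of positive roots, and noting that $(\lambda,\alpha)=0$ precisely for $\alpha \in \spa\Phi_Z$, equality holds iff $v - w^{-1}v \in \spa\Phi_Z$. The key parabolic-orbit identity $W_f \cdot v \cap (v + \spa\Phi_Z) = W_Z \cdot v$ then shows this is equivalent to $w \in W_J W_Z$. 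Since $W_Z$ fixes $\lambda$, the maximizing vertices are exactly $W_J \cdot \lambda$, and $F_J = \mathsf{P} \cap \{\mu : (\mu, v) = (\lambda, v)\}$ is a face. The same argument shows every face has a $W_f$-translate of the form $F_J$, by bringing a supporting normal into $C^+$.

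Next, I would compute $\dim F_J$. Certainly $\spa(F_J - \lambda) \subseteq \spa\{\alpha_j : j \in J\}$. For $j \in J \setminus Z$ one has $s_j \lambda - \lambda \propto \alpha_j \ne 0$; for $j \in J \cap Z$ adjacent in $D$ to some $j' \in J \setminus Z$, a direct expansion of $s_j s_{j'} \lambda - \lambda$ shows $\alpha_j$ enters the span. Propagating through each connected component of $D|_J$ yields
\[
\spa(F_J - \lambda) = \spa\{\alpha_j : j \text{ lies in a component of } D|_J \text{ not contained in } D|_Z\}.
\]
Hence $\dim F_J = |J|$ exactly when no component of $D|_J$ is contained in $D|_Z$; otherwise deleting any such "bad" component $C$ from $J$ leaves $F_J$ unchanged, since $W_C$ fixes $\lambda$, and iterating reaches a valid $J$ representing the same face.

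Finally, for uniqueness, observe that Step 1 actually gives $N_{F_J} \cap C^+ = C_J^+$, so the normal cone $N_{F_J}$ is a union of Weyl-fan cones all of "type $J$" (i.e.\ $W_f$-conjugate to $C_J^+$). Since $C^+$ is a fundamental domain for the $W_f$-action, two distinct faces $C_{J_1}^+ \ne C_{J_2}^+$ of $C^+$ lie in distinct $W_f$-orbits of cones; hence the normal cones $N_{F_{J_1}}, N_{F_{J_2}}$ are $W_f$-conjugate iff $J_1 = J_2$, which by duality means two valid $J$'s give the same orbit of faces only if they are equal. The main technical obstacle is the parabolic-orbit identity used in Step 1, which comes from a careful analysis of double-coset representatives in $W_Z \backslash W_f / W_J$: any $w$ factors as $w = u z k$ with $u$ a minimal double-coset representative, $z \in W_Z$, $k \in W_J$, and the condition $v - w^{-1}v \in \spa\Phi_Z$ forces $u = e$ because the contribution of a non-trivial $u$ necessarily involves positive roots outside $\spa\Phi_Z$.
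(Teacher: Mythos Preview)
The paper does not actually prove Proposition~\ref{prop:face_lattice}: it simply records the statement and cites \cite[Corollary~1.3]{renner2009descent}. So there is no ``paper's own proof'' to compare against; you have supplied a self-contained argument where the authors chose to defer to the literature.

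Your overall strategy is the standard one and is essentially correct: exhibit $F_J$ as the face cut out by a linear functional in the relative interior of $C_J^+$, identify the affine span of $F_J$ via a connectivity argument along $D|_J$, and separate orbits by looking at which face of $C^+$ the normal cone meets. Two points deserve care. First, there is a mild type mismatch: with $v=\sum_{i\notin J}\varpi_i^\vee$ you have $(v,\alpha_j)=\delta_{j\notin J}$, so $v$ lies in $C_J^+$, but the statement ``$v-w^{-1}v$ is a non-negative combination of positive roots'' needs $(v,\alpha_i^\vee)\ge 0$, which is not automatic outside the simply-laced case. It is cleaner either to take $v=\sum_{i\notin J}\varpi_i$ and pair with $\lambda-w\lambda\in\sum_i\mathbb{R}_{\ge 0}\alpha_i^\vee$, or to run the inequality on $\lambda-w\lambda$ rather than on $v-w^{-1}v$; both fixes are immediate. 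Second, in your double-coset argument for the parabolic-orbit identity the factorisation should read $w^{-1}=z\,u\,k$ with $z\in W_Z$, $k\in W_J$ and $u$ the minimal $(W_Z,W_J)$-representative (not $u\,z\,k$); once written this way, $w^{-1}v=z\,u\,v$, so $v-w^{-1}v\in\spa\Phi_Z$ reduces to $v-uv\in\spa\Phi_Z$, and the minimality of $u$ on both sides forces $u=e$ by the usual inversion-set argument. With these two adjustments your proof goes through and gives a complete, direct justification of a result the paper only quotes.
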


We can say  more about this bijection.

\begin{definition}
\label{def:face_representative}
For $J$ a subset of $\Delta$ satisfying Theorem \ref{prop:face_lattice}~(ii), let $\mathsf{F}_J(\lambda)$ be the unique face in the corresponding $W_f$-orbit  containing $\lambda$.
\end{definition}

We can describe $\mathsf{F}_J(\lambda)$ in two ways.
First we have that
\begin{equation}
\label{eq:face_J_vertices}
\mathsf{F}_J(\lambda) := \conv\{w\cdot\lambda \mid  w\in W_J\}.
\end{equation}

We can also describe $F_J$ as an intersection of $\mathsf{P}^\Phi(\lambda)$ with supporting hyperplanes.
For each index $j \in I_n$ we have that the hyperplane
\begin{equation}\label{eq:Hj}
H_j(\lambda)\coloneqq \{ v \in V \mid \langle \varpi_{j}^\vee, v\rangle = \langle \varpi_{j}^\vee, \lambda\rangle \}
\end{equation}
is a supporting hyperplane of $\mathsf{P}^\Phi(\lambda)$. 
For a set $ J \subset I_n $ we define the affine subspace 
\begin{equation}
\label{eq:face_span}
H_{J}(\lambda) := \bigcap_{j\in J^c} H_j(\lambda) = \lambda + \text{Span}\{\alpha_j\, :\, j\in J\},
\end{equation} where $J^c$ is the complement of $J$ in $I_n$.
Note that $H_j(\lambda)=H_{I_n\setminus\{j\}}(\lambda)$.
By the second equality we can see that the linear subspace parallel to $H_J(\lambda)$ is
\begin{equation}
    \label{eq:face_linear}
    L_J = \text{Span}\{\alpha_j\,:\,j\in J\}.
\end{equation}
We have that
\begin{equation}
\label{eq:face_J_ineqs}
\mathsf{F}_{J}(\lambda) \coloneqq H_{J}(\lambda) \cap  \mathsf{P}^\Phi(\lambda).
\end{equation}

\begin{remark}
The definition of orbit polytopes makes sense even for non-irreducible root systems.
In particular, Equation \eqref{eq:face_J_vertices} shows that the face $\mathsf{F}_J(\lambda)$ is itself an orbit polytope
%it's root system is $\Delta_J = \{\alpha_j~:~j\in J\} \subseteq \Delta$
with Weyl group $W_J$. 
Proposition \ref{prop:face_lattice} and Corollary \ref{cor:generic} are valid not just for orbit polytopes but also for their faces.

\end{remark}

\begin{remark}
\label{rem:facets}
Every facet containing $\lambda$ is an intersection of the form $ \mathsf{P}^\Phi(\lambda) \cap H_i $.
However, when $i\in Z(\lambda)$  the intersection is a face of codimension greater than 1.
We call such faces degenerate facets.
\end{remark}

An immediate consequence of Proposition \ref{prop:face_lattice} is the following.
\begin{cor}
\label{cor:generic}
Let $ \lambda $ be \emph{generic}, i.e., with empty vanishing set. Then the
$ W_f $-orbits of faces of $ \mathsf{P}^\Phi(\lambda)$ are in bijection with subsets of $ \Delta $.
In particular, the $ W_f $-orbits of the facets are in bijection with $ \Delta $. 
More precisely, every
face of $ \mathsf{P}^\Phi(\lambda)$ is in the $W_f$-orbit of $F_J(\lambda)$ for some $J\subset I_n$.
Furthermore, the face $F_J(\lambda)$ has dimension $|J|$ and its $W_f$-orbit consists precisely  of $[W_f:W_J]$ faces. 
\end{cor}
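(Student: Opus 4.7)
The plan is to invoke Proposition \ref{prop:face_lattice} directly and supplement it with a short orbit--stabilizer computation for the cardinality of each orbit.

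When $\lambda$ is generic the vanishing set $Z(\lambda)$ is empty, so the subdiagram $D|_Z$ of the Dynkin diagram is empty as well. The condition in Proposition \ref{prop:face_lattice}(ii) --- that no connected component of $D|_J$ is contained in $D|_Z$ --- is then vacuously satisfied for every $J \subseteq \Delta$. Consequently the bijection of that proposition specializes to one between $W_f$-orbits of $d$-dimensional faces of $\mathsf{P}^\Phi(\lambda)$ and arbitrary subsets $J \subseteq \Delta$ with $|J| = d$. Taking $d = n-1$ yields that facet orbits are in bijection with the $n$ subsets of $\Delta$ of size $n-1$, i.e.\ with $\Delta$ itself via $J \mapsto \Delta \setminus J$. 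The dimension claim $\dim \mathsf{F}_J(\lambda) = |J|$ is already encoded in this bijection; alternatively, by \eqref{eq:face_span} and \eqref{eq:face_J_vertices}, $\mathsf{F}_J(\lambda)$ is contained in $\lambda + L_J$, and the displacements $s_j(\lambda) - \lambda$ for $j \in J$ are nonzero scalar multiples (by genericity) of the linearly independent $\alpha_j$, so they span $L_J$ and $\mathsf{F}_J(\lambda)$ is full-dimensional inside $\lambda + L_J$.

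It remains to compute $|W_f \cdot \mathsf{F}_J(\lambda)| = [W_f : \mathrm{Stab}_{W_f}(\mathsf{F}_J(\lambda))]$. The inclusion $W_J \subseteq \mathrm{Stab}_{W_f}(\mathsf{F}_J(\lambda))$ is immediate from $\mathsf{F}_J(\lambda) = \conv(W_J \cdot \lambda)$. Conversely, if $w \in W_f$ stabilizes $\mathsf{F}_J(\lambda)$ setwise, it permutes its vertices. Since $\lambda$ is generic, $\mathrm{Stab}_{W_f}(\lambda) = W_{Z(\lambda)}$ is trivial, so $\lambda$ is a vertex of $\mathsf{P}^\Phi(\lambda)$ and a fortiori of $\mathsf{F}_J(\lambda)$; hence $w\lambda \in W_J \cdot \lambda$, say $w\lambda = w'\lambda$ with $w' \in W_J$. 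Then $(w')^{-1}w$ fixes $\lambda$, forcing $w = w' \in W_J$. The orbit--stabilizer theorem then gives $[W_f : W_J]$ faces in the $W_f$-orbit of $\mathsf{F}_J(\lambda)$.

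The only delicate point is verifying that the vertices of $\mathsf{F}_J(\lambda)$ are exactly the points of $W_J \cdot \lambda$, and in particular that $\lambda$ is a genuine vertex rather than an interior point of some proper subface; both reduce to the triviality of $\mathrm{Stab}_{W_f}(\lambda)$, which is precisely where genericity enters. Everything else is a direct unwinding of Proposition \ref{prop:face_lattice} and Definition \ref{def:face_representative}.
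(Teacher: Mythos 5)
Your proof is correct and matches the route the paper implicitly intends when it calls this corollary ``an immediate consequence'' of Proposition~\ref{prop:face_lattice}: the vacuity of condition~(ii) when $Z(\lambda)=\emptyset$, plus an orbit--stabilizer count (the one piece of genuine work, since the proposition only gives the bijection of orbits, not their sizes). One small imprecision worth tightening: $\lambda$ being a vertex of $\mathsf{P}^\Phi(\lambda)$ is not a consequence of $\mathrm{Stab}_{W_f}(\lambda)$ being trivial; rather, any nonzero point of a $W_f$-orbit lies on a sphere about the origin and is therefore automatically an extreme point of the convex hull --- genericity is what you actually use, correctly, a line later, to identify the vertex set of $\mathsf{F}_J(\lambda)$ with $W_J\cdot\lambda$ and to cancel $w'$ against $w$.
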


\section{Lattice Formula }
For any ${x,y\in W_a}$, let ${[x,y]}$ be the Bruhat interval consisting of the elements ${z\in W_a}$ such that ${x\leq z\leq y}$. 
We write ${\leq y\coloneqq[\text{id},y]}$. 

\begin{prop} \label{prop: to prove LF}
  For every dominant coweight $\lambda$, we have 
  \begin{equation}\label{leqthetaalcoves}
    A\bigg(\leq\theta(\lambda)\bigg)=\bigsqcup_{\mu\in W_f\cdot X_\lambda}A(W_f)+\mu,
\end{equation}
where ${X_\lambda=\{\mu\in(\Lambda^\vee)^+\mid\mu\leq\lambda\}}$.
\end{prop}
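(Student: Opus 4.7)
The plan is to establish both inclusions of~\eqref{leqthetaalcoves}.
First note that the right-hand side is genuinely disjoint: $W_f$ acts trivially on $\Lambda^\vee/\mathbb{Z}\Phi^\vee$ and $X_\lambda\subseteq\lambda+\mathbb{Z}\Phi^\vee$ by the definition of dominance order, so every $\mu\in W_f\cdot X_\lambda$ lies in the single $\Omega$-coset of $\lambda$; on the other hand, by the discussion in Section~\ref{prelim} the coweight-vertices of any alcove $A_z$ form a complete set of representatives of $\Lambda^\vee/\mathbb{Z}\Phi^\vee\cong\Omega$, so $A_z$ contains at most one element of $W_f\cdot X_\lambda$. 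Fix $\sigma\in\Omega$ with $\lambda\in\sigma+\mathbb{Z}\Phi^\vee$, and write $v_\sigma(z):=z\cdot\sigma$ for the unique $\sigma$-coweight-vertex of $A_z$. A direct computation using Lemma~\ref{lemmafirstlat}(ii) and the fact that $W_\sigma$ fixes $\sigma$ gives $v_\sigma(\theta(\lambda))=\lambda$, and more generally $v_\sigma(z)\in W_f\cdot\mu'$ whenever $z\in W_f\theta(\mu')W_\sigma$.

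The key technical input is a sub-lemma: for dominant coweights $\mu_1,\mu_2$ in the same $\Omega$-coset,
\[
\mu_1\leq\mu_2\ \text{(dominance)}\iff \theta(\mu_1)\leq\theta(\mu_2)\ \text{(Bruhat).}
\]
For the forward direction I would induct on the level $\langle\mu_2-\mu_1,2\rho\rangle$, reducing to a dominance cover $\mu\mapsto\mu+\alpha^\vee$ (with $\alpha^\vee$ a positive coroot) and exhibiting a Bruhat chain from $\theta(\mu)$ to $\theta(\mu+\alpha^\vee)$ produced by successive reflections through affine hyperplanes $H_{\alpha,k}$; the length formula $\ell(\theta(\mu))=|\Phi^+|+2\langle\mu,\rho\rangle$, a direct consequence of~\eqref{separating hyp}, provides the necessary length bookkeeping. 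For the converse I would use the compatibility of Bruhat order on $W_a$ with the parabolic quotient by $W_\sigma$: the maximal representative of each $W_f$-double coset is $\theta(\mu')$ for some dominant $\mu'$ (Lemma~\ref{lemmafirstlat}(v)), and the Bruhat order on these maxima matches dominance order, the classical fact underlying the Schubert stratification of the affine Grassmannian.

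Granted the sub-lemma, the inclusion $\supseteq$ is direct. For $\mu=w\cdot\mu'\in W_f\cdot X_\lambda$ with $w\in W_f$ and $\mu'\in X_\lambda$, Lemma~\ref{lemmafirstlat}(ii) gives $A(\theta(\mu')W_\sigma)=A(W_f)+\mu'$; translating by $w$ produces $A(W_f)+\mu=A(w\theta(\mu')W_\sigma)$. Every element of $w\theta(\mu')W_\sigma$ lies in the double coset $W_f\theta(\mu')W_\sigma$, whose maximum is $\theta(\mu')$ by Lemma~\ref{lemmafirstlat}(v); hence it is $\leq\theta(\mu')\leq\theta(\lambda)$ by the sub-lemma, as required.

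For $\subseteq$, suppose $z\leq\theta(\lambda)$ and let $\mu'_z$ be the dominant representative of $W_f\cdot v_\sigma(z)$. By the observation at the end of the first paragraph, $z\in W_f\theta(\mu'_z)W_\sigma$, whose maximum is $\theta(\mu'_z)$. The standard compatibility of Bruhat order with parabolic double cosets (obtained by applying the lifting property in turn on the left for $W_f$ and on the right for $W_\sigma$) yields $\theta(\mu'_z)\leq\theta(\lambda)$, and the converse direction of the sub-lemma then gives $\mu'_z\leq\lambda$, so $\mu'_z\in X_\lambda$ and $v_\sigma(z)\in W_f\cdot X_\lambda$; hence $A_z$ lies in the piece $A(W_f)+v_\sigma(z)$ of the right-hand side. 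The main obstacle is the sub-lemma, and especially its converse direction: unlike in finite Weyl groups, Bruhat order in $W_a$ is not characterized by inclusion of separating hyperplane sets, so the argument must route through the parabolic-quotient identification above rather than through a direct hyperplane-counting comparison.
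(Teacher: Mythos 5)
Your proposal is correct and follows essentially the same route as the paper: both reduce to the key fact that $\theta$ intertwines the dominance order on $(\sigma+\mathbb{Z}\Phi^\vee)\cap(\Lambda^\vee)^+$ with the Bruhat order on $W_f\backslash W_a/W_\sigma$ (the paper cites \cite[Section 2.1]{PreCanonical} for this; you sketch a proof), then decompose $\leq\theta(\lambda)$ as $\bigsqcup_{\mu\in X_\lambda}W_f\theta(\mu)W_\sigma$ using bi-invariance of the lower interval (your ``parabolic double coset compatibility'' is the same lifting-property argument the paper invokes), and finally translate to alcoves via Lemma~\ref{lemmafirstlat}\eqref{dos}. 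Your reformulation in terms of the unique coweight-vertex $v_\sigma(z)$ of each alcove is a pleasant bookkeeping device that also makes the disjointness of the right-hand side explicit, a point the paper leaves implicit.
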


\begin{proof}
Let ${\sigma\in\Omega}$ be such that ${\lambda\in\sigma+\mathbb{Z}\Phi^\vee}$. By the Lifting Property and \eqref{cinq} in Lemma \ref{lemmafirstlat}, we can easily prove that the set ${\leq\theta(\lambda)}$ is $W_f$-invariant on the left and $W_\sigma$-invariant on the right. On the other hand, for every ${\sigma'\in\Omega}$ the map $\theta$ 
\begin{equation}\label{biyec}
\theta:\left(\sigma'+\mathbb{Z}\Phi^\vee\right)\cap(\Lambda^\vee)^+\xrightarrow{\sim}W_f\backslash W_a/W_{\sigma'},
\end{equation}
given by $\lambda' \mapsto W_f \theta(\lambda')W_{\sigma'}$ is a bijection that intertwines the dominance order on the left with the Bruhat order on the right (for more details on this bijection see \cite[Section 2.1]{PreCanonical}).  In particular, if $\mu, \lambda \in (\Lambda^\vee)^{+}, $ then ${\mu\leq\lambda}$ if and only if $\mu\in\sigma+\mathbb{Z}\Phi^\vee$ and ${\theta(\mu)\leq\theta(\lambda)}$. 

Let us prove the equation 
\begin{equation}\label{desc}
\leq\theta(\lambda)=\bigsqcup_{\mu\in X_\lambda}W_f\theta(\mu)W_\sigma.
\end{equation}

First, we prove the inclusion $\supseteq$. If $\mu \in X_{\lambda},$ then ${\theta(\mu)\leq\theta(\lambda)}$ and thus
 the invariance of ${\leq\theta(\lambda)}$ implies that ${W_f\,\theta(\mu) W_\sigma}$ is contained in the set ${\leq\theta(\lambda)}$. We now prove the inclusion $\subseteq.$
Let ${u\leq\theta(\lambda)}$. Once again,  the invariance of $\leq \theta (\lambda)$ implies that  ${W_f\,uW_\sigma} $ is contained in the set $\leq \theta(\lambda) $. By the bijection \ref{biyec}, the maximal element of the coset ${W_f\,uW_\sigma}$ is ${\theta(\mu)}$ for some $\mu\in\left(\sigma+\mathbb{Z}\Phi^\vee\right)\cap(\Lambda^\vee)^+$.   Then, ${W_f\,\theta(\mu)W_\sigma} $ is contained in the set $\leq \theta(\lambda) $. In particular, ${\theta(\mu)\leq\theta(\lambda)}$ and thus ${\mu\in X_\lambda}$. We conclude the proof of equality  \eqref{desc}. From there we see
\begin{equation*}
    \leq\theta(\lambda) =W_f\cdot\left(\bigsqcup_{\mu\in X_\lambda}\theta(\mu)W_\sigma\right).
\end{equation*}
By looking at the corresponding alcoves and by using  Lemma \ref{lemmafirstlat}\eqref{dos}, we get
\begin{align*}
   A(\leq\theta(\lambda))&=W_f\cdot\left(\bigsqcup_{\mu\in X_\lambda}A(\theta(\mu)W_\sigma)\right)\\
    &=W_f\cdot\left(\bigsqcup_{\mu\in X_\lambda}A(W_f)+\mu\right)\\
    &=\bigsqcup_{\mu\in W_f\cdot X_\lambda}A(W_f)+\mu.
\end{align*}
\end{proof}

The closure of the set ${A(W_f)\subset V}$ defines a polytope, which we call the $W_f$-polytope. 
If $\Phi$ has type $A_n$, then $W_f$ is isomorphic to the symmetric group $S_{n+1}$. 
Figure \ref{polytope example} shows the $S_3$-polytope and the $S_4$-polytope. 
The black arrows are the fundamental (co)weights and the colored arrows are the simple (co)roots. 
Proposition \eqref{leqthetaalcoves} shows that the $W_f$-polytope tessellates the closure of the set $A(\leq \theta(\lambda))$.

\begin{figure}[ht]
\captionsetup[subfigure]{labelformat=empty}
  \begin{subfigure}[b]{0.5\textwidth}
    \includegraphics[width=\textwidth]{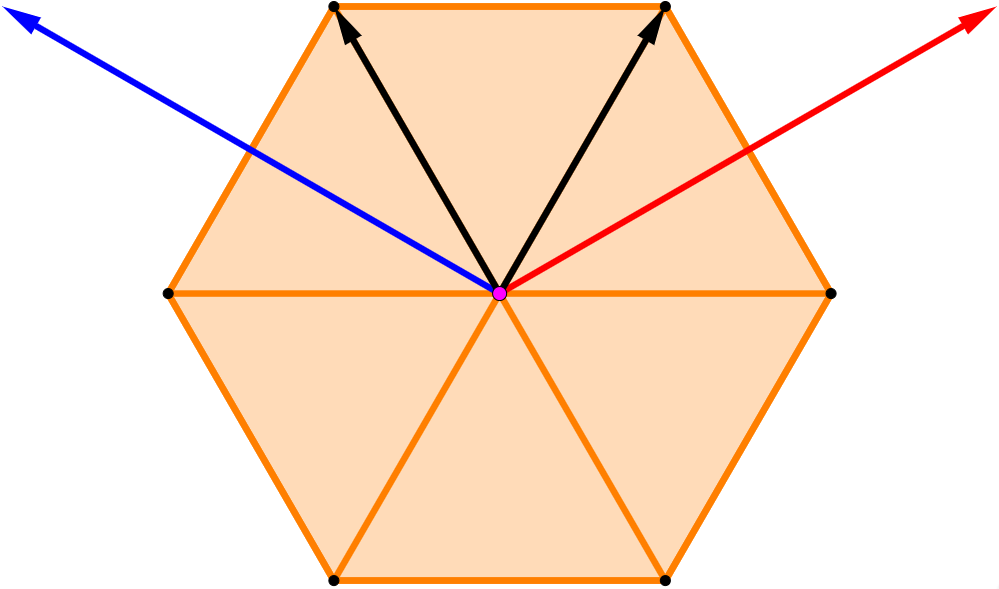}
    \caption{$S_3$-polytope}
  \end{subfigure}
  \hfill
  \begin{subfigure}[b]{0.4\textwidth}
    \includegraphics[width=\textwidth]{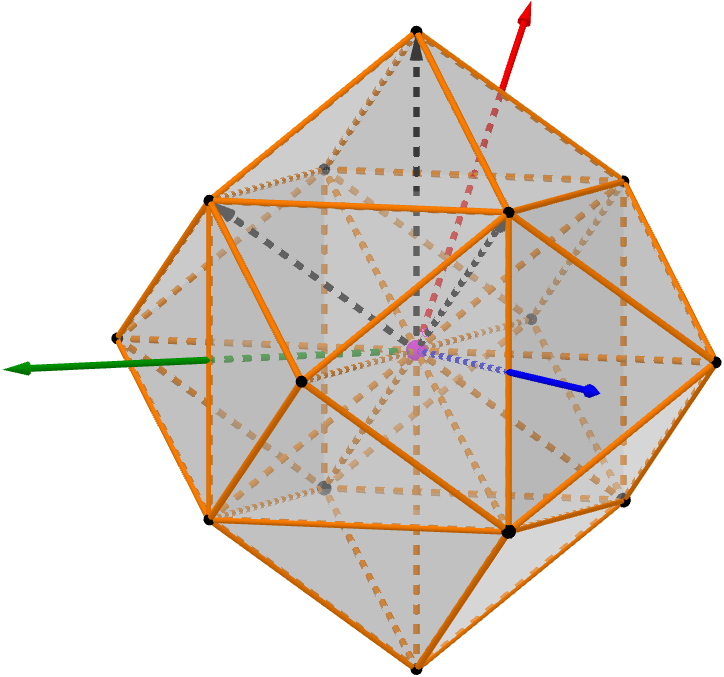}
    \caption{\hspace*{2em}$S_4$-polytope}
  \end{subfigure}
  \caption{$W_f$-polytope for types $\tilde{A_2}$ and $\tilde{A_3}$}
  \label{polytope example}
\end{figure}

\begin{theorem}[Lattice Formula]\label{thm:first-lattice-formula}
For every dominant coweight $\lambda$, we have 
\begin{equation}\label{firstlatdef}
    |\leq\theta(\lambda)|=|W_f| \ |\mathsf{P}^\Phi(\lambda)\cap (\lambda+\mathbb{Z}\Phi^\vee)|.
\end{equation}
\end{theorem}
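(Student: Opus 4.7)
The plan is straightforward once Proposition \ref{prop: to prove LF} is in hand: take cardinalities in the disjoint union provided there, and then identify the indexing set $W_f\cdot X_\lambda$ with the set of lattice points $\mathsf{P}^\Phi(\lambda)\cap(\lambda+\mathbb{Z}\Phi^\vee)$.

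Since $w\mapsto A_w$ is a bijection between $W_a$ and the set of alcoves, $|\leq\theta(\lambda)|$ equals the number of alcoves in $A(\leq\theta(\lambda))$. By Proposition \ref{prop: to prove LF}, this number counts the alcoves in a disjoint union of $|W_f\cdot X_\lambda|$ translates $A(W_f)+\mu$, each of which contains exactly $|W_f|$ alcoves. Hence
$$|\leq\theta(\lambda)| \;=\; |W_f|\cdot |W_f\cdot X_\lambda|.$$

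Next I would show $W_f\cdot X_\lambda = \mathsf{P}^\Phi(\lambda)\cap(\lambda+\mathbb{Z}\Phi^\vee)$. The inclusion $\subseteq$ is routine: if $\mu\in X_\lambda$ then $\lambda-\mu\in\mathbb{Z}_{\geq 0}\Phi^\vee\subset \mathbb{Z}\Phi^\vee$, and $W_f$-invariance of $\mathbb{Z}\Phi^\vee$ places the entire orbit $W_f\cdot\mu$ inside $\lambda+\mathbb{Z}\Phi^\vee$. Membership in $\mathsf{P}^\Phi(\lambda)$ follows from the classical convexity result (see \cite[\S VI.1.6, Prop.~18]{Bour46}): a coweight $\nu$ lies in $\mathrm{Conv}(W_f\cdot\lambda)$ if and only if its unique dominant $W_f$-conjugate $\nu^+$ satisfies $\nu^+\leq\lambda$ in the dominance order. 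Conversely, given $\nu\in \mathsf{P}^\Phi(\lambda)\cap(\lambda+\mathbb{Z}\Phi^\vee)$, pass to its dominant conjugate $\nu^+$; as both sets on the left are $W_f$-invariant we still have $\nu^+\in\mathsf{P}^\Phi(\lambda)\cap(\lambda+\mathbb{Z}\Phi^\vee)$, and the same convexity fact gives $\nu^+\leq\lambda$, so $\nu^+\in X_\lambda$ and $\nu\in W_f\cdot X_\lambda$. Combining this equality with the previous displayed formula yields the Lattice Formula.

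The only genuine obstacle is pinpointing the correct classical statement: the equivalence $\nu\in \mathrm{Conv}(W_f\cdot\lambda)\Leftrightarrow \nu^+\leq\lambda$ is usually phrased for weights rather than coweights, so one should verify that the standard argument (invariance of the dominance order under $W_f$, together with an induction on the number of simple coroots needed to move $\nu^+$ down to $\lambda$) transports verbatim to the coweight/coroot setting used throughout this paper. Everything else is bookkeeping with the cardinalities of the disjoint translates.
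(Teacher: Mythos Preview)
Your proof is correct and follows essentially the same route as the paper: count alcoves in the disjoint union from Proposition~\ref{prop: to prove LF}, then identify $W_f\cdot X_\lambda$ with $\mathsf{P}^\Phi(\lambda)\cap(\lambda+\mathbb{Z}\Phi^\vee)$ using the standard convexity characterization of orbit polytopes together with $W_f$-invariance of both pieces. The paper phrases the convexity fact as ``$\mathsf{P}^\Phi(\lambda)\cap C^+$ consists precisely of the elements $\lambda-(x_1\alpha_1^\vee+\cdots+x_n\alpha_n^\vee)\in C^+$ with all $x_i\geq 0$'' and then intersects with $\lambda+\mathbb{Z}\Phi^\vee$ to force integrality, which is exactly your Bourbaki citation combined with the lattice condition.
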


\begin{proof}
    Note that $\mathsf{P}^\Phi(\lambda)\cap C^+$ consists precisely of the elements $\lambda-(x_1\alpha_1^\vee+\cdots +x_n\alpha_n^\vee)\in C^+$ with all the $x_i\geq 0$. 
It follows that $X_\lambda=\mathsf{P}^\Phi(\lambda)\cap(\lambda+\mathbb{Z}\Phi^\vee)\cap C^+$. 
Since ${\mathsf{P}^\Phi(\lambda)}$ and ${\lambda +\mathbb{Z}\Phi^\vee}$ are $W_f$-invariant,  and ${V=W_f\cdot C^+}$, we obtain
\begin{equation*}
    W_f\cdot X_\lambda=\mathsf{P}^\Phi(\lambda)\cap(\lambda+\mathbb{Z}\Phi^\vee).
\end{equation*}
Therefore, \eqref{firstlatdef} follows by counting alcoves in \eqref{leqthetaalcoves}.
\end{proof}

\section{On the volumes \texorpdfstring{$V_J^\Phi$}{}}

% Similar to the  Lattice Formula, the Geometric Formula also relates ${\theta(\lambda)}$ and ${\mathsf{P}^\Phi(\lambda)}$, for dominant coweights $\lambda$. 
%Let $I_n$ denote the set ${\{1,\ldots,n\}}$. 
%For any ${J\subset I_n}$, let $W_J$ be the parabolic subgroup (of $W_f$) generated by ${\{s_j\mid j\in J\}}$. 
%The set ${\conv(W_J\cdot\lambda)}$ is a face of ${\mathsf{P}^\Phi(\lambda)}$, of dimension ${|J|}$\footnote{This face can be degenerated if some of the coordinates of $\lambda$, in the fundamental coweight basis, are zero.}.

In this section we study the volumes of orbit polytopes.
We fix an irreducible root system $\Phi$ of rank $n$.
% By considering general, not necessarily irreducible, root systems we can cover both the orbit polytopes $\mathsf{P}^\Phi(\lambda)$ and its faces $\mathsf{F}_J(\lambda)$.
Notice that faces of $\mathsf{P}^\Phi(\lambda)$ in the same $W_f$-orbit have the same volume, without loss of generality we can focus on the representatives $\mathsf{F}_J(\lambda)$. 

\begin{definition}
    For $\lambda\in V$ and $J\subset I_n$, we define ${V_J^{\Phi}(\lambda)}$ as the ${|J|}$-dimensional volume of $\mathrm{Conv}(W_J\cdot\lambda)$. 
\end{definition}

% Let $\Phi$ be of rank $n$, and let ${J\subset I_n}$. For a dominant coweight $\lambda$, we defined ${V_J^\Phi(\lambda)}$ as the ${|J|}$-dimensional volume of ${\mathrm{Conv}(W_J\cdot\lambda)}$. Let $D$ be the Dynkin diagram corresponding to $\Phi$. The vertices of $D$ are indexed by $I_n$. Let $D_J$ be the subgraph of $D$ induced by $J$. We define $J_c$ as the collection of the index sets of the connected components of $D_J$. For example in type $A_8$, ${\{1,2,4\}_c=\{\{1,2\},\{4\}\}}$. Here are some straightforward facts.

% Let $\Phi$ be of rank $n$, and let ${J\subset I_n}$. For a dominant coweight $\lambda$, we defined ${V_J^\Phi(\lambda)}$ as the ${|J|}$-dimensional volume of ${\mathrm{Conv}(W_J\cdot\lambda)}$. 
Let $D$ be the Dynkin diagram corresponding to $\Phi$. We denote by  $D_J$ the graph obtained from  $D$ by eliminating all the vertices $i$ for $i\not \in J$. We define $\mathcal{C}_J$ as the collection of the index sets of the connected components of $D_J$. For example in type $A_n$ for $n\geq 4$, $\mathcal{C}_{\{1,2,4\}}=\{\{1,2\},\{4\}\}$. We say that $J$ is connected if $\mathcal{C}_J=\{J\}$.

In the following lemma we record some basic facts about $V_J^\Phi(\lambda)$. 

\begin{lemma}\label{lem: volgralfacts}
    Let ${J\subset I_n}$ and let ${\lambda=m_1\varpi^\vee_1+\cdots+m_n\varpi^\vee_n\in V}$.
    \begin{enumerate}[(i)]
        \item  \label{VolFact1} The function $V_J^\Phi$ only depends on the ``$J$-coordinates''. That is, if ${\lambda_J=\displaystyle\sum_{j\in J}m_j\varpi_j^\vee}$ then 
          $$  V_J^\Phi(\lambda)=V_J^\Phi(\lambda_J).$$
        \item  \label{VolFact2} The volume ${V_J^\Phi(\lambda)}$ can be computed as the product of the volumes corresponding to the connected components of $J$. That is,
           $$ V_J^\Phi(\lambda)=\prod_{K\in \mathcal{C}_J}V_K^\Phi(\lambda).$$
 %       \item The volume $V_J^\Phi$ does not depend on $n$, the rank of $\Phi$. That is, let $k$ be the maximum element of $J$ and let $V^i_J$ be $V_J^\Phi$ where $\Phi$ has rank $i$, then
 % $$  V_J^n(m_1,\ldots,m_n)=V_J^k(m_1,\ldots,m_k).$$
   \qed
    \end{enumerate}
\end{lemma}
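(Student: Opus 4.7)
For part (i), the plan is to observe that $W_J$ pointwise fixes the complementary piece $\lambda-\lambda_J$, so that the orbit $W_J\cdot\lambda$ is a rigid translate of $W_J\cdot\lambda_J$. This follows from the standard reflection formula $s_j(\varpi_k^\vee)=\varpi_k^\vee-\delta_{jk}\alpha_j^\vee$: each $s_j$ with $j\in J$ fixes every $\varpi_k^\vee$ with $k\notin J$, hence fixes $\lambda-\lambda_J=\sum_{k\notin J}m_k\varpi_k^\vee$. Therefore $\conv(W_J\cdot\lambda)=\conv(W_J\cdot\lambda_J)+(\lambda-\lambda_J)$, and translation invariance of the $|J|$-dimensional Euclidean volume yields the claim.

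For part (ii), the first step is to exploit the direct product decomposition $W_J=\prod_{K\in\mathcal{C}_J}W_K$, which is valid because $(\alpha_i,\alpha_j)=0$ whenever $i,j$ lie in distinct connected components of $D_J$, so the corresponding simple reflections commute. Applying (i) within each component, each factor $W_K$ fixes $\lambda-\lambda_K$; in particular $W_K$ acts trivially on $\lambda_{K'}$ when $K'\in\mathcal{C}_J$ is different from $K$. Combining this with the direct product structure gives the set-theoretic Minkowski identity $W_J\cdot\lambda_J=\sum_{K\in\mathcal{C}_J}W_K\cdot\lambda_K$. Taking convex hulls, using the elementary fact that $\conv(A+B)=\conv(A)+\conv(B)$, yields the Minkowski decomposition $\conv(W_J\cdot\lambda_J)=\sum_{K\in\mathcal{C}_J}\conv(W_K\cdot\lambda_K)$.

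To conclude, the summands $\conv(W_K\cdot\lambda_K)$, once translated to a common origin, lie inside the subspaces $\text{Span}\{\alpha_j:j\in K\}$, and these subspaces are pairwise orthogonal because $(\alpha_i,\alpha_j)=0$ for $i,j$ in distinct connected components of $D_J$. The Minkowski sum of polytopes sitting in mutually orthogonal subspaces is isometric to their Cartesian product, so the $|J|$-dimensional Euclidean volume of the whole is the product of the $|K|$-dimensional Euclidean volumes of the factors. Combined with part (i), which gives $V_K^\Phi(\lambda)=V_K^\Phi(\lambda_K)$, this yields (ii).

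There is no substantial obstacle to this plan; the only bookkeeping point requiring care is the identity between the volume of a Cartesian product of polytopes and the product of their volumes, which follows immediately from the block-diagonal form of the Gram determinant once one chooses orthonormal bases adapted to the orthogonal decomposition.
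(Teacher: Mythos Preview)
Your argument is correct. The paper itself gives no proof of this lemma --- the statement simply ends with a \qed{} --- so there is nothing to compare against; your write-up is precisely the natural way to fill in the omitted details, and both parts are handled cleanly.
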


% \begin{remark}\label{volremark} For part (iii) of Lemma \ref{lem: volgralfacts} to hold, we use the convention that the Dynkin diagram $D$ corresponding to $\Phi$ ``grows to the right''. As a consequence, some Dynkin diagrams are flipped. For example in type $C_n$, $n\geq 3$, the standard corresponding Dynkin diagram

% Let $\Phi^n$ be the root system of type $A_n$, and $\varpi_i^n$ the corresponding fundamental weights. Let ${(m_1,\ldots,m_n)=m_1\varpi^n_1+\cdots+m_n\varpi^n_n}$ be a dominant weight, and fix ${J\subset I_n}$. Consider the root system $\Phi^{n+1}$ of type $A_{n+1}$. It is easy to prove that
% \begin{equation*}
%     V_J^{\Phi^n}(m_1,\ldots,m_n)= V_J^{\Phi^{n+1}}(m_1,\ldots,m_{n+1}).
% \end{equation*}
% For this to hold in all types, not only in type $A$, we will use the convention that the ``weird simple root'' is always denoted by $\alpha_1$. Equivalently, we demand that the Dynkin diagrams ``grow to the right''. As a consequence, some Dynkin diagrams are flipped (while the others remain unchanged). For example in type $C_n$, $n\geq 3$, the standard corresponding Dynkin diagram
% \begin{center}
% \begin{dynkinDiagram}[labels={\alpha_1,\alpha_2,\alpha_{n-2},\alpha_{n-1},\alpha_n},edge length=1.5cm,root radius=0.09cm]{C}{}
% \end{dynkinDiagram} 
% \end{center}
% becomes
% \begin{center}
% \begin{dynkinDiagram}[labels={\alpha_n,\alpha_{n-1},\alpha_3,\alpha_2,\alpha_1},edge length=1.5cm,root radius=0.09cm, backwards]{C}{}
% \end{dynkinDiagram} 
% \end{center}
% \end{remark}

We will give a recursive formula for $V_J^\Phi$. Before doing so, we will need some previous results. 

\begin{definition}
    For $J\subset I_n$ we define $\mathcal{B}_J=\{b_1,\ldots , b_n\}$ where
\begin{equation*}
    b_i=\begin{cases}
        \alpha_i^\vee & \text{if } i\in J \\
        \varpi_i^\vee & \text{if } i\notin J
    \end{cases}
\end{equation*}   
We will show that this set is in fact a basis. We call $\mathcal{B}_J$ the $J$-mixed basis. For $j\in I_n$ we define $\nu_j\in V$ to be the unique element satisfying $(\nu_j,b_i)=\delta_{ij}$ for all $i\in I_n$. 
%Finally, we define $H_J$ to be the subspace of $V$ spanned by $\{\alpha_j\}_{j\in J}$.
\end{definition}

\begin{lemma}\label{lem: pyramidprev} Let $J\subset I_n$. Then,

\begin{enumerate}[(i)]
    \item  \label{basis JJ}
 $\mathcal{B}_J=\{b_1,\dots,b_n\}$ is a basis of $V$.
\item \label{trestres}  For all $j\in J$ we have $(\varpi_j^\vee,\nu_j)>0$. 
\item  \label{zerozero} For any $x\in L_J$ we have  $ \displaystyle  \sum_{w\in W_J}wx=0$, where $L_J$ is given by \eqref{eq:face_linear}. 
\end{enumerate}
\end{lemma}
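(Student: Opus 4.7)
My plan is to prove the three parts in the order (iii), (i), (ii), since (iii) is the most immediate and (ii) will be the main technical step.

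For part (iii), I would first note that $L_J$ is stable under each simple reflection $s_j$ with $j \in J$ (since $s_j v = v - (v, \alpha_j^\vee)\alpha_j$ and $\alpha_j \in L_J$), and hence stable under all of $W_J$. So $\sum_{w \in W_J} wx$ lies in $L_J$ whenever $x \in L_J$, and a standard re-indexing argument shows the sum is $W_J$-invariant. It therefore suffices to observe that the $W_J$-fixed subspace of $L_J$ is zero: if $v \in L_J$ is fixed by every $s_j$ with $j \in J$, then the reflection formula forces $(v, \alpha_j^\vee) = 0$ for all $j \in J$, so $v$ is orthogonal to $\mathrm{Span}\{\alpha_j^\vee : j \in J\} = L_J$, forcing $v = 0$.

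For part (i), I would compute the determinant of the change-of-basis matrix from the coweight basis $\{\varpi_k^\vee\}_{k \in I_n}$ to $\mathcal{B}_J$. Expanding each $b_i$ in the coweights, for $i \notin J$ the $i$-th column is the standard basis vector $e_i$, while for $i \in J$ one uses the expansion $\alpha_i^\vee = \sum_k A_{ki}\varpi_k^\vee$ with $A_{ki} = (\alpha_k, \alpha_i^\vee)$, the $(k,i)$-entry of the Cartan matrix of $\Phi$. Laplace expansion along the $(n - |J|)$ standard-basis columns collapses the determinant to the $J \times J$ principal minor of the Cartan matrix, which is the Cartan determinant of the parabolic root subsystem $\Phi_J$, known to be strictly positive (finite-type Cartan matrices are positive-definite, hence so are all principal minors). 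This proves linear independence, and hence $\mathcal{B}_J$ is a basis.

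For part (ii), which I expect to be the main obstacle, my plan is to first locate $\nu_j$ geometrically. The defining relations $(\nu_j, \varpi_i^\vee) = 0$ for $i \notin J$ place $\nu_j$ in the orthogonal complement of $\mathrm{Span}\{\varpi_i^\vee : i \notin J\}$. Since $(\alpha_k, \varpi_i^\vee) = \delta_{ki} = 0$ whenever $k \in J$ and $i \notin J$, this complement contains $L_J$, and a dimension count gives equality, so $\nu_j \in L_J$ for $j \in J$. Next, decompose $\varpi_j^\vee = p + q$ with $p = \mathrm{proj}_{L_J}(\varpi_j^\vee)$ and $q \perp L_J$; then $(\varpi_j^\vee, \nu_j) = (p, \nu_j)$. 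Expanding $p$ and $\nu_j$ in the basis $\{\alpha_k : k \in J\}$ of $L_J$ with Gram matrix $G_{ik} = (\alpha_i, \alpha_k)$, the conditions $(p, \alpha_i) = (\varpi_j^\vee, \alpha_i) = \delta_{ji}$ for $i \in J$ and $(\nu_j, \alpha_i) = \tfrac{(\alpha_i,\alpha_i)}{2}(\nu_j, \alpha_i^\vee) = \delta_{ji}\tfrac{(\alpha_j, \alpha_j)}{2}$ give $Gp = e_j$ and $G$ applied to the $\nu_j$-coefficient vector equals $\tfrac{(\alpha_j, \alpha_j)}{2} e_j$. Hence $\nu_j = \tfrac{(\alpha_j, \alpha_j)}{2}\, p$, and consequently
\[
(\varpi_j^\vee, \nu_j) = \tfrac{(\alpha_j, \alpha_j)}{2}\,\|p\|^2,
\]
which is strictly positive because $(p, \alpha_j) = 1$ forces $p \neq 0$. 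The subtlety to watch for is that the ambient inner product pairs $\nu_j$ against coroots while $p$ is characterized by its pairings with roots, so tracking the factor $(\alpha_j, \alpha_j)/2$ correctly through the Gram-matrix identification is the crux of the argument.
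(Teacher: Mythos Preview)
Your proof is correct in all three parts, but the arguments for (i) and (ii) differ from the paper's in interesting ways.

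For (i), the paper proceeds by induction on $|J^c|$, replacing one $\varpi_i^\vee$ at a time by $\alpha_i^\vee$ and arguing by contradiction. Your determinant argument is more direct: after expanding along the $\varpi_i^\vee$-columns for $i\notin J$, the remaining $J\times J$ block is (up to transpose) the Cartan matrix of $\Phi_J$, whose positive-definiteness gives a nonzero determinant immediately. This is cleaner and avoids the inductive bookkeeping.

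For (ii), the paper writes $\nu_j=\sum_{k\in J}u_k\alpha_k$, pairs with $\alpha_i^\vee$ to obtain $u=M_J^{-1}e_j$, and then invokes the nontrivial fact (citing Lusztig) that the inverse of a finite-type Cartan matrix has strictly positive entries; this yields $(\varpi_j^\vee,\nu_j)=u_j>0$. Your route is more self-contained: by showing $\nu_j=\tfrac{(\alpha_j,\alpha_j)}{2}\,p$ where $p=\mathrm{proj}_{L_J}(\varpi_j^\vee)$, you get $(\varpi_j^\vee,\nu_j)=\tfrac{(\alpha_j,\alpha_j)}{2}\|p\|^2>0$ from $(p,\alpha_j)=1$ alone. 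You trade the external reference for an explicit computation, which is a genuine simplification here. (The paper's approach does give the stronger statement that \emph{all} $u_k$ are positive, but only $u_j>0$ is needed.)

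For (iii), both arguments are standard and essentially equivalent in difficulty: the paper pairs off cosets of $\langle s_j\rangle$ in $W_J$ to cancel the sum directly, while you use the fact that the $W_J$-fixed subspace of $L_J$ is zero. Either is fine.
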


\begin{proof}\hfill
\begin{enumerate}[(i)]
    \item We proceed by induction on $|J^c|$. If $J^c=\emptyset $ there is nothing to prove.  Let us now assume that $|J^c|\geq 1$. Let $i\in J^{c}$ and define $J_i^c=J^c\setminus\{i \}$.  Of course, $|J_i^c|<|J^{c}|$. Thus our inductive hypothesis implies that $\mathcal{B}_{J_i}$ is a basis of $V$. Thus we can write  
    \begin{equation} \label{varpi in mixed basis}
       \varpi_i^\vee   = \sum_{j\in J_i} \lambda_{j}\alpha_{j}^\vee + \sum_{k\in J_i^c} \lambda_{k}\varpi_{k}^\vee.
    \end{equation}
    If $\lambda_i\neq 0$ then we are done, since in this case $\alpha_{i}^\vee $ would live in the span of $\mathcal{B}_J$. So, we assume that $\lambda_i=0$. After pairing both sides of \eqref{varpi in mixed basis} with $\alpha_j$ for $j\in J$ we get a homogeneous linear system with $|J|$ unknowns $\{\lambda_j\}_{j\in J}$ whose matrix is obtained from the Cartan matrix of $\Phi$ by eliminating all the rows and columns indexed by $J^c$. We denote this matrix by $M_J$. We notice that $M_{J}$ is a block diagonal matrix with each block being the Cartan matrix of the root system associated to the connected components of $J$. 
     It follows that  $M_{J}$ is invertible. We conclude that $\lambda_j=0$ for all $j\in J$. Therefore, \eqref{varpi in mixed basis} reduces to 
     \begin{equation} \label{varpi in mixed basis two}
       \varpi_i^\vee   =  \sum_{k\in J_i^c} \lambda_{k}\varpi_{k}^\vee.
    \end{equation}
    This is a contradiction since $\{ \varpi_k^\vee \}_{k\in I_n}$ is a basis of $V$. It follows that $\lambda_i\neq 0$ and $\mathcal{B}_J$ is a basis of $V$.

    % Assume that $J^c=\{ i \}$. When we write $\varpi_{i}^\vee$ in terms of the co-root basis we know that the coefficient that accompanies  $\alpha_{i}^\vee$ is different of $0$.  Therefore,  $\alpha_i^\vee$  lives in the span of $\mathcal{B}_{J}$. It follows that $\mathcal{B}_J$ spans $V$ and it is a basis of $V$. 

\item Fix $j\in J$. Note that $\nu_j$ can equivalently be defined as the vector $\nu_j\in L_J$ such that $(\nu_j,\alpha_i^\vee)=\delta_{ij}$ for all $i\in J$. Write 
\begin{equation}\label{eq: claimnu}
    \nu_j=\sum_{k\in J}u_k\alpha_k,
\end{equation}
for some real numbers $u_k$. Pairing each side of \eqref{eq: claimnu} with $\alpha_i^\vee$ for $i\in J$, gives a system of equations $e_j=M_J\,u$, where $e_j=(\delta_{ij})_{i\in J}$ and $u=(u_i)_{i\in J}$ are column vectors.   It follows that $u$ is the $j^\text{th}$ column of $M_J^{-1}$. Furthermore, by \cite[\S5]{lusztig1992inverse} we know that all the entries of $M_J^{-1}$ are strictly positive. We conclude that $u_k>0$ for all $k\in J$. In particular, $0<u_j=(\varpi_j^\vee,\nu_j)$. 
    \item By linearity we can assume $x=\alpha_j$ for some $j\in J$. The group $W_J$ can be partitioned as
    \begin{equation}
        W_J = W_J^{s_j} \sqcup W_J^{s_j}\cdot s_j,
    \end{equation}
    where $W_J^{s_j}  = \{  w\in W_J \mid \ell(w) <\ell(ws_j)  \} $.  By recalling that $s_j(\alpha_j) = -\alpha_j$, we obtain 
\begin{equation}
  \sum_{w\in W_J}wx  = \sum_{w\in W_J^{s_j} }  (wx+ws_jx) = \sum_{w\in W_J^{s_j} }  (wx-wx) = 0. \qedhere
\end{equation}
\end{enumerate}
\end{proof}

\begin{lemma}\label{lem: generalized pyramid formula} Let $J\subset I_n$.  For any $\lambda\in C^+$ we have
\begin{equation}\label{eq: generalized pyramid formula}
    V_J^\Phi(\lambda)=\dfrac{1}{|J|}\sum_{j\in J}\left[W_J:W_{J\setminus\{j\}}\right]\,\dfrac{(\lambda, \nu_j)}{\Vert\nu_j\Vert} \ V_{J\setminus\{j\}}^\Phi(\lambda).
\end{equation}    
\end{lemma}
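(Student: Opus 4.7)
My approach is to apply the classical pyramid decomposition to the polytope $F_J(\lambda) = \mathrm{Conv}(W_J\cdot\lambda)$ with apex at its unique $W_J$-invariant interior point. Both sides of the identity are polynomial in the coordinates $m_1,\dots,m_n$ of $\lambda$ in the fundamental coweight basis, so it suffices to establish the formula when $\lambda$ has empty vanishing set; in that generic case $F_J(\lambda)$ has dimension exactly $|J|$ inside $H_J(\lambda) = \lambda + L_J$, and has $|W_J|$ distinct vertices. The non-generic cases then follow by polynomial continuation.

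For the apex I would take the centroid $c = \frac{1}{|W_J|}\sum_{w\in W_J}w\lambda$, which by Lemma~\ref{lem: pyramidprev}\eqref{zerozero} coincides with the orthogonal projection of $\lambda$ onto $L_J^\perp$. This $c$ is the unique $W_J$-fixed point of $H_J(\lambda)$ and lies in the relative interior of $F_J(\lambda)$. The standard pyramid formula then reads
\begin{equation*}
V_J^\Phi(\lambda) \;=\; \frac{1}{|J|}\sum_{F}h_F\cdot\mathrm{Vol}_{|J|-1}(F),
\end{equation*}
where the sum runs over facets $F$ of $F_J(\lambda)$ and $h_F$ is the distance from $c$ to the affine hull of $F$. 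Applying Proposition~\ref{prop:face_lattice} to $F_J(\lambda)$, regarded as an orbit polytope for $W_J$, the facets decompose into $W_J$-orbits indexed by $j \in J$: the orbit of index $j$ has size $[W_J:W_{J\setminus\{j\}}]$ and its representative through $\lambda$ is $F_{J\setminus\{j\}}(\lambda)$, of $(|J|-1)$-volume $V_{J\setminus\{j\}}^\Phi(\lambda)$. Because $c$ is $W_J$-invariant, $h_F$ is constant on each orbit.

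The remaining task is to identify the common value of $h_F$ on the $j$-th orbit with $(\lambda,\nu_j)/\|\nu_j\|$. The defining equations for $\nu_j$ do double duty: the conditions $(\nu_j,\varpi_i^\vee)=0$ for $i\notin J$ force $\nu_j\in L_J$, while the conditions $(\nu_j,\alpha_i^\vee)=0$ for $i\in J\setminus\{j\}$ force $\nu_j\perp L_{J\setminus\{j\}}$. Hence $\nu_j/\|\nu_j\|$ is a unit normal, inside $H_J(\lambda)$, to the affine hyperplane $\lambda+L_{J\setminus\{j\}}$ containing the facet $F_{J\setminus\{j\}}(\lambda)$. The signed distance from $c=\lambda_{L_J^\perp}$ to this hyperplane is therefore $(\lambda-c,\nu_j)/\|\nu_j\|=(\lambda,\nu_j)/\|\nu_j\|$, the last equality because $\nu_j\perp L_J^\perp$. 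Expanding $\lambda$ in fundamental coweights and invoking the positivity input from Lemma~\ref{lem: pyramidprev}\eqref{trestres} (more precisely, the computation in its proof showing $\nu_j=\sum_{k\in J}u_k\alpha_k$ with every $u_k>0$, which makes each $(\varpi_i^\vee,\nu_j)\geq 0$ for $i\in J$) shows this signed distance is nonnegative for dominant $\lambda$ and strictly positive generically, so it agrees with the geometric distance $h_F$. Substituting into the pyramid formula produces the stated recursion. The only real obstacle is the orthogonality characterization of $\nu_j$ as the inward facet-normal; once that is extracted from the dual-basis definition of the $J$-mixed basis, everything else is routine pyramid geometry coupled with the polynomial extension argument.
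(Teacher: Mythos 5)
Your proof is correct and follows essentially the same pyramid-decomposition strategy as the paper: you use the $W_J$-fixed centroid $\xi_{J^c}=\tfrac{1}{|W_J|}\sum_{w\in W_J}w\lambda$ (the orthogonal projection of $\lambda$ onto $L_J^\perp$) as apex, identify $\nu_j$ as the inward normal of the facet $\mathsf{F}_{J\setminus\{j\}}(\lambda)$ inside $L_J$, compute the common height $(\lambda,\nu_j)/\|\nu_j\|$, and sum over $W_J$-orbits of facets. The one point to be careful about is your reduction to generic $\lambda$ via ``polynomial continuation'': polynomiality of $V_J^\Phi$ is normally extracted \emph{from} this lemma (Remark~\ref{rmk: volumes are polynomials}), so invoking it here risks circularity — either argue via continuity of the $|J|$-dimensional volume, or, as the paper does, observe directly that when some $m_i=0$ the pyramids over degenerate faces in $\mathcal{F}$ have zero $|J|$-dimensional volume, so the identity persists without any limiting argument.
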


\begin{proof}
Let $\lambda=(m_1,\ldots,m_n)$ in the fundamental coweight basis. Note that $W_J\cdot\lambda\subset L_J+\lambda$. We will compute the $|J|$-dimensional volume of $\mathcal{P}=\conv(W_J\cdot\lambda)$ in the space $U=L_J+\lambda$. For $w\in W_J$ and $j\in J$, let
\begin{equation*}
F(w,j)=w\conv(W_{J\setminus\{j\}}\cdot\lambda).
\end{equation*}
Let us suppose that $m_i>0$ for all $i\in J$, so that, by Corollary \ref{cor:generic}, the facets of $\mathcal{P}$ are given by
\begin{equation*}
    \mathcal{F}=\{F(w,j)\mid j\in J \text{ and } w\in W_J/W_{J\setminus\{j\}}\}.
\end{equation*}
For $K\subset I_n$, let $\xi_K\in V$ be the vector resulting from writing $\lambda$ in the $J$-mixed basis and then removing all but its $K$-coordinates.
By definition, it follows that $\xi_{J^c}$ is stabilized by $W_J$, where $J^c$ is the complement of $J$ in $I_n$.
Since $\xi_J\in L_J$, Lemma \ref{lem: pyramidprev}\eqref{zerozero} gives
\begin{equation*}
    \dfrac{1}{|W_J|}\sum_{w\in W_J}w\lambda=\dfrac{1}{|W_J|}\sum_{w\in W_J}w(\xi_{J^c}+\xi_J)
    =\xi_{J^c}+\dfrac{1}{|W_J|}\sum_{w\in W_J}w\xi_J
    =\xi_{J^c}.
\end{equation*}
Therefore, $\xi_{J^c}\in\mathcal{P}$. Let $h(w,j)$ be the pyramid in $U$ having $F(w,j)$ as its base and $\xi_{J^c}$ as its apex. Note that $\mathcal{P}$ is just the union of these pyramids. Let $j\in J$. There are exactly $[W_J:W_{J\setminus\{j\}}]$ pyramids of the form $h(w,j)$, with $w\in W_J/W_{J\setminus\{j\}}$. All of these pyramids have equal $|J|$-dimensional volume, which is given by
\begin{equation*}
    \text{Vol}_{|J|}\left(h(\text{id},j)\right)=\dfrac{1}{|J|} d_j\,\text{Vol}_{|J|}\left(F(\text{id},j)\right)
    =\dfrac{1}{|J|} d_j\, V_{J\setminus\{j\}}^\Phi(\lambda),
\end{equation*}
where $d_j$ denotes the distance from $\xi_{J^c}$ to the hyperplane $L_{J\setminus\{j\}}+\lambda$ of $U$. By definition, $\nu_j\in L_J$ is orthogonal to $L_{J\setminus\{j\}}$. Therefore,
\begin{equation*}
    d_j=\dfrac{(\lambda-\xi_{J^c},\nu_j)}{\Vert\nu_j\Vert}=\dfrac{(\lambda,\nu_j)}{\Vert\nu_j\Vert},
\end{equation*}
since $(\xi_{J^c},\nu_j)=0$. 
This gives the desired result, assuming $m_i>0$ for all $i\in J$. 

Finally if some $m_i$ are zero, we can still divide $\mathcal{P}$ according to its facets. 
They are contained in $\mathcal{F}$ but the containment may be proper, see Remark \ref{rem:facets}.
Suppose ${F(w,j)\in\mathcal{F}}$ is not a facet. 
Then $F(w,j)$ is still a face of $\mathcal{P}$, but has dimension $<|J|-1$. 
Then the corresponding (degenerated) pyramid ${h(w,j)}$ has zero $|J|$-dimensional volume, so these extra faces are not a problem.
\end{proof}

\begin{remark}\label{rmk: volumes are polynomials}
Let ${\mathbf{m}=(m_i)_{i\in I_n}}$ be a $n$-tuple of non-negative real numbers and $J\subset I_n$.
Let $V_J^\Phi(\mathbf{m})\coloneqq V_J^\Phi(m_1\varpi_1^\vee+\cdots+m_1\varpi_n^\vee)$.
Lemma \ref{lem: generalized pyramid formula} implies that $V_J^\Phi(\mathbf{m})$ is in fact a homogeneous polynomial of degree $|J|$ in the variables $\{m_j \mid j\in J\}$. 
Indeed, note that $$(\lambda,\nu_j)=\sum_{i\in J}m_i(\varpi_i^\vee,\nu_j),$$ for every $j\in J$, that is, $(\lambda,\nu_j)$ is a homogeneous polynomial of degree $1$ in the desired variables. 
Since $V_\emptyset^\Phi=1$, the result follows by induction.
\end{remark}

From now on, $V_J^\Phi$ will mean the corresponding polynomial in $\mathbb{R}[m_1,m_2,\ldots,m_n]$. 

% \begin{cor} Tipo A y Lemma \ref{lem: generalized pyramid formula}:
% \begin{itemize}
%     \item Shifts
%     \item Para conocer cualquier $V_J^A$ basta conocer $V_{I_n}^A$.
%     \item Mas aun, para conocer $V_{I_n}^A$ lo unico que hay que saber es la inversa de la matriz de Cartan.
% \end{itemize}
% \end{cor}
% \Damian{Les tinca poner algo asi?}

\begin{lemma}\label{lem:squarefree-coefficients}
Let $J,K \subset I_n$ and define ${\displaystyle m_J\coloneqq\prod_{j\in J}m_j\in\mathbb{R}[m_1,\ldots,m_n]}$.
Let $c_{J,K}^{\Phi}$ be the coefficient of $m_J$ in ${V_K^\Phi(\mathbf{m})}$ and $c_{J}^{\Phi} = c_{J,J}^{\Phi}$. 
\begin{enumerate}[(i)]
    \item If ${K\neq J}$ then $c_{J,K}^{\Phi}=0$.
    \item $c_J^{\Phi}>0$.  
\end{enumerate}
\end{lemma}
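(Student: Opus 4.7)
The plan is to dispatch (i) by a direct support-and-degree argument, and then to prove (ii) by induction on $|J|$ using the recursion of Lemma \ref{lem: generalized pyramid formula} together with the positivity statement of Lemma \ref{lem: pyramidprev}\eqref{trestres}.

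For (i), I will combine two facts already established. By Lemma \ref{lem: volgralfacts}\eqref{VolFact1}, the polynomial $V_K^\Phi(\mathbf{m})$ depends only on the variables $\{m_k : k\in K\}$, so if $J\not\subseteq K$ then $m_J$ involves a variable missing from $V_K^\Phi$ and therefore $c_{J,K}^\Phi=0$. By Remark \ref{rmk: volumes are polynomials}, $V_K^\Phi$ is homogeneous of degree $|K|$, while $m_J$ has degree $|J|$; hence the case $J\subsetneq K$ also forces $c_{J,K}^\Phi = 0$ for degree reasons. The only remaining possibility is $J=K$, which proves (i).

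For (ii), I will induct on $|J|$. The base case $J=\emptyset$ gives $V_\emptyset^\Phi=1$, whence $c_\emptyset^\Phi = 1 > 0$. For the inductive step, Lemma \ref{lem: generalized pyramid formula} yields the polynomial identity
\[
V_J^\Phi \;=\; \frac{1}{|J|}\sum_{j\in J} [W_J:W_{J\setminus\{j\}}] \, \frac{(\lambda,\nu_j)}{\|\nu_j\|} \, V_{J\setminus\{j\}}^\Phi.
\]
Expanding $(\lambda,\nu_j) = \sum_{i\in I_n} m_i (\varpi_i^\vee,\nu_j)$ and using that $V_{J\setminus\{j\}}^\Phi$ lies in $\mathbb{R}[m_k : k\in J\setminus\{j\}]$, the only way to obtain the monomial $m_J=\prod_{i\in J}m_i$ from the $j$-th summand is to extract the $m_j(\varpi_j^\vee,\nu_j)$ term from $(\lambda,\nu_j)$ together with the coefficient of $m_{J\setminus\{j\}}$ in $V_{J\setminus\{j\}}^\Phi$. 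Reading off the coefficient gives
\[
c_J^\Phi \;=\; \frac{1}{|J|}\sum_{j\in J} [W_J:W_{J\setminus\{j\}}] \, \frac{(\varpi_j^\vee,\nu_j)}{\|\nu_j\|} \, c_{J\setminus\{j\}}^\Phi.
\]
Each factor on the right is strictly positive: the parabolic index is a positive integer, $(\varpi_j^\vee,\nu_j)>0$ by Lemma \ref{lem: pyramidprev}\eqref{trestres}, $\|\nu_j\|>0$, and $c_{J\setminus\{j\}}^\Phi>0$ by the inductive hypothesis. Therefore $c_J^\Phi>0$.

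There is no substantial obstacle once the two inputs (coordinate-dependence from Lemma \ref{lem: volgralfacts}\eqref{VolFact1} and the positivity $(\varpi_j^\vee,\nu_j)>0$ from Lemma \ref{lem: pyramidprev}\eqml{trestres}) are in hand; the only mild point to notice is that Lemma \ref{lem: generalized pyramid formula}, a priori a numerical equality on the dominant cone $C^+$, can be read as an identity of polynomials in $\mathbb{R}[m_1,\dots,m_n]$, since by Remark \ref{rmk: volumes are polynomials} both sides are polynomials and they agree on a set with nonempty interior. Extracting monomial coefficients is then legitimate, and the induction closes itself.
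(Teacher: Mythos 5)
Your proof is correct and follows the same approach as the paper: part (i) by the support-and-degree argument via Lemma \ref{lem: volgralfacts}\eqref{VolFact1} and homogeneity, part (ii) by induction on $|J|$ using the recursion \eqref{eq: generalized pyramid formula} and the positivity $(\varpi_j^\vee,\nu_j)>0$ from Lemma \ref{lem: pyramidprev}\eqref{trestres}. As a minor point, you correctly retained the factor $\|\nu_j\|>0$ in the extracted coefficient, which the paper's displayed recursion for $c_J^\Phi$ silently omits (harmless for the sign, but your version matches \eqref{eq: generalized pyramid formula} exactly), and your explicit remark that the recursion can be read as a polynomial identity is a sound and worthwhile justification that the paper leaves implicit.
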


\begin{proof} If $J=\emptyset$ both statements are trivial. Thus we can assume that $J\neq\emptyset$. 

\begin{enumerate}[(i)]
    \item   We recall that  ${V_K^\Phi(\mathbf{m})}$ is a homogeneous polynomial in the variables $\{m_k\}_{k\in K}$. Suppose that $c_{J,K}^{\Phi}\neq 0$.  It follows that ${J\subset K}$. On the other hand, $m_J$ has degree ${|J|}$, so that ${|J|=|K|}$. Therefore, $J=K$ which contradicts our hypothesis. We conclude that $c_{J,K}^{\Phi}=0$.  
    \item   The coefficient of $m_J$ in $(\lambda, \nu_j)\,V_{J\setminus\{j\}}^\Phi(\lambda)$ is $(\varpi_j^\vee,\nu_j)\,c_{J\setminus\{j\}}^\Phi$. Thus,  \eqref{eq: generalized pyramid formula} gives
\begin{equation*}
    c_J^\Phi=\dfrac{1}{|J|}\sum_{j\in J}[W_J:W_{J\setminus\{j\}}]\,(\varpi_j^\vee,\nu_j)\,c_{J\setminus\{j\}}^\Phi.
\end{equation*}
By induction, we can assume $c_{J\setminus\{j\}}^\Phi>0$. Thus, the result follows by  Lemma \ref{lem: pyramidprev}\eqref{trestres}. 
\end{enumerate}
\end{proof}

\begin{cor}
\label{cor:independent}
The polynomials $V_J^\Phi(\mathbf{m})$, with ${J\subset I_n}$, are linearly independent.
\end{cor}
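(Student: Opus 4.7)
The plan is to derive linear independence directly from Lemma \ref{lem:squarefree-coefficients} by reading off the coefficients of the squarefree monomials $m_J=\prod_{j\in J}m_j$ in any hypothetical linear relation. More precisely, I would start with an arbitrary relation
$$\sum_{J\subset I_n}\lambda_J\,V_J^\Phi(\mathbf{m})=0\quad\text{in}\quad\mathbb{R}[m_1,\dots,m_n],$$
and then, for each fixed $K\subset I_n$, compare the coefficient of the monomial $m_K$ on both sides.

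By Lemma \ref{lem:squarefree-coefficients}(i), the coefficient of $m_K$ in $V_J^\Phi(\mathbf{m})$ is $c_{K,J}^\Phi=0$ whenever $J\neq K$, while by Lemma \ref{lem:squarefree-coefficients}(ii) the coefficient of $m_K$ in $V_K^\Phi(\mathbf{m})$ is $c_K^\Phi>0$. Thus the relation collapses to $\lambda_K\,c_K^\Phi=0$, and since $c_K^\Phi\neq 0$ we conclude $\lambda_K=0$. Running this argument for every $K\subset I_n$ forces all coefficients in the relation to vanish, giving linear independence.

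There is essentially no obstacle here: the content sits entirely in Lemma \ref{lem:squarefree-coefficients}, which has already been established. The only point worth double-checking when writing this up is that the indexing set is the full power set of $I_n$ (including $J=\emptyset$, where $V_\emptyset^\Phi=1$ and $m_\emptyset=1$, so the argument still works as a base case), so the proof will be a short one-paragraph deduction.
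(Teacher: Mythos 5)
Your proof is correct and is exactly the argument the paper has in mind; the paper's proof is the one-line remark that the result ``follows by a direct application of Lemma~\ref{lem:squarefree-coefficients},'' and your write-up simply spells out that application (comparing coefficients of the squarefree monomials $m_K$). No issues.
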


%Esto esta muy informal
\begin{proof} 
The result follows by a direct application of Lemma \ref{lem:squarefree-coefficients}. 
\end{proof}

\begin{remark}\label{rmk: postinkov}
So far we have used the Euclidean volume on $V$.
To compare our results to the volume formulas of Postnikov there is a scalar factor of $ \sqrt{n+1} $ missing for type $ A_n $.
This is because his formulas are stated with the volume relative to the root lattice, that is, it is scaled so that the fundamental parallelepiped spanned by the simple roots has volume 1, but the Euclidean volume is $ \sqrt{n+1} $ by Equation \eqref{eq:vol_fundamental_root}.
Our variables $m_1,\cdots,m_n$ correspond to the variables $u_1,\cdots, u_n$ in \cite[Section 16]{postnikov2009permutohedra}.
\end{remark}

\section{Geometric Formula}

The purpose of this section is to prove Theorem \ref{geofor}. For the reader's convenience, we state the theorem again.
\begin{theorem}\label{thm:geometric-formula}
For every root system $\Phi$, there are unique ${\mu_J^{\Phi}\in\mathbb{R}}$ such that for any dominant coweight $\lambda$,
\begin{equation}\label{geofordef}
    |\leq\theta(\lambda)|=\sum_{J\subset I_n}\mu_J^{\Phi}V_J^{\Phi}(\lambda).
\end{equation}
\end{theorem}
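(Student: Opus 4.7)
The strategy is to combine the Lattice Formula (Theorem~\ref{thm:first-lattice-formula}) with a local formula of Berline--Vergne / Pommersheim--Thomas type, then exploit the $W_f$-symmetry of the orbit polytope to group terms face-orbit by face-orbit. Finally, uniqueness comes for free from the linear independence of the $V_J^\Phi$ established in Corollary~\ref{cor:independent}.

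\textbf{Step 1: Rewrite via the Lattice Formula.} By Theorem~\ref{thm:first-lattice-formula}, $|\!\leq\!\theta(\lambda)| = |W_f| \cdot |\mathsf{P}^\Phi(\lambda) \cap (\lambda+\mathbb{Z}\Phi^\vee)|$. Translating by $-\lambda$, the right-hand factor equals the number of lattice points of $\mathbb{Z}\Phi^\vee$ inside the translated orbit polytope $\mathsf{P}^\Phi(\lambda) - \lambda$. Both this polytope and the lattice are $W_f$-invariant (the polytope because $\lambda$ was translated to a $W_f$-fixed framework up to the shift, but more importantly because the ambient symmetry of $\mathsf{P}^\Phi(\lambda)$ around $\mathbf{0}$ is preserved in a $W_f$-equivariant way in the counting problem; one can equivalently count directly in $\mathsf{P}^\Phi(\lambda)$ with the shifted lattice $\lambda+\mathbb{Z}\Phi^\vee$, which is $W_f$-invariant since $\lambda$ is a coweight).

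\textbf{Step 2: Apply a local lattice-point formula.} The Berline--Vergne construction (and equivalently the Pommersheim--Thomas construction) produces, for every rational polytope $\mathsf{P}$ with respect to a given lattice $\Lambda$, a decomposition
\[
|\mathsf{P}\cap\Lambda| \;=\; \sum_{\mathsf{F}\,\text{face of }\mathsf{P}} \alpha(\mathsf{F})\,\mathrm{Vol}(\mathsf{F}),
\]
where $\alpha(\mathsf{F})$ depends only on the transverse cone of $\mathsf{P}$ along $\mathsf{F}$ together with the lattice. Crucially, the Berline--Vergne construction is canonical with respect to the inner product, hence is equivariant under any lattice-preserving orthogonal symmetry of $\mathsf{P}$. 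Applying this to $\mathsf{P}=\mathsf{P}^\Phi(\lambda)$ with $\Lambda=\lambda+\mathbb{Z}\Phi^\vee$ yields such a decomposition.

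\textbf{Step 3: Group by $W_f$-orbits of faces.} Since $W_f$ acts orthogonally, preserves $\mathbb{Z}\Phi^\vee$, and sends $\mathsf{P}^\Phi(\lambda)$ to itself, the coefficient $\alpha(\mathsf{F})$ is constant on each $W_f$-orbit of faces; moreover $\mathrm{Vol}(\mathsf{F})$ is $W_f$-invariant. By Proposition~\ref{prop:face_lattice}, for generic $\lambda$ the orbits of faces are indexed by subsets $J\subset I_n$, with representative $\mathsf{F}_J(\lambda)$ of volume $V_J^\Phi(\lambda)$, and orbit size $|W_f|/|W_J|$; hence
\[
|\mathsf{P}^\Phi(\lambda)\cap (\lambda+\mathbb{Z}\Phi^\vee)| \;=\; \sum_{J\subset I_n} \tfrac{|W_f|}{|W_J|}\,\alpha_J^\Phi(\lambda)\, V_J^\Phi(\lambda),
\]
where $\alpha_J^\Phi(\lambda)$ is the common coefficient on the orbit of $\mathsf{F}_J(\lambda)$. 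For non-generic $\lambda$ one checks that the missing orbits (those indexed by $J$ failing the condition of Proposition~\ref{prop:face_lattice}(ii)) contribute zero because the corresponding $V_J^\Phi(\lambda)$ vanishes by Lemma~\ref{lem: volgralfacts} and Remark~\ref{rmk: volumes are polynomials}. A key point I need to verify is that $\alpha_J^\Phi(\lambda)$ does not actually depend on $\lambda$: the transverse cone of $\mathsf{P}^\Phi(\lambda)$ along $\mathsf{F}_J(\lambda)$ is, up to translation, generated by the coroots indexed by $J^c$ (the outward normals of the facets meeting $\mathsf{F}_J(\lambda)$), a combinatorial datum depending only on $\Phi$ and $J$. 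Setting $\mu_J^\Phi := |W_f|^2\,\alpha_J^\Phi/|W_J|$ gives the desired formula.

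\textbf{Step 4: Uniqueness.} Both sides of \eqref{geofordef} are functions of $\lambda\in (\Lambda^\vee)^+$; the right-hand side is, by Remark~\ref{rmk: volumes are polynomials}, a polynomial in the coordinates $m_1,\dots,m_n$ of $\lambda$, and the polynomials $\{V_J^\Phi\}_{J\subset I_n}$ are linearly independent by Corollary~\ref{cor:independent}. Since $(\Lambda^\vee)^+$ is Zariski-dense in $C^+$, the identity \eqref{geofordef} forces the coefficients $\mu_J^\Phi$ to be uniquely determined.

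\textbf{Expected main obstacle.} The delicate point is Step~3: showing that the local coefficient $\alpha_J^\Phi$ attached to the orbit of $\mathsf{F}_J(\lambda)$ depends only on $\Phi$ and $J$, not on the particular $\lambda$. This requires isolating the correct input of the Berline--Vergne / Pommersheim--Thomas machinery---namely, that it only sees the transverse cone together with the lattice---and verifying that this transverse cone is $\lambda$-independent. The $W_f$-equivariance of the chosen construction (needed so the coefficient is constant on orbits) also has to be made explicit, and the degenerate case (when $\lambda$ lies on walls) must be reconciled by checking that the vanishing of $V_J^\Phi(\lambda)$ exactly absorbs the missing orbits.
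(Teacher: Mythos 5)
Your Steps 1--3 reproduce the paper's argument for \emph{generic} dominant coweights $\lambda$ (those with $Z(\lambda)=\emptyset$), and your Step 4 uniqueness argument matches the paper exactly. The issue is the passage to non-generic $\lambda$, which you dispose of in one sentence but which actually requires a separate and non-trivial idea.

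Your claim is that for non-generic $\lambda$ the ``missing'' orbits (indexed by $J$ failing Proposition~\ref{prop:face_lattice}(ii)) contribute zero because $V_J^\Phi(\lambda)=0$. That part is true but beside the point. The real obstruction is that when $\lambda$ lies on a wall, the faces $\mathsf{F}_J(\lambda)$ that \emph{do} survive have different transverse cones than their generic counterparts: the normal cone at $\mathsf{F}_J(\lambda)$ becomes strictly larger (it is now a union of several chambers of the Coxeter fan, since more supporting hyperplanes meet the polytope along $\mathsf{F}_J$), so the feasible cone, and hence the transverse cone, becomes strictly smaller. Your assertion that ``the transverse cone of $\mathsf{P}^\Phi(\lambda)$ along $\mathsf{F}_J(\lambda)$ is, up to translation, generated by the coroots indexed by $J^c$'' is exactly what Lemma~\ref{lem:tcones} proves, but it is proved there \emph{only for generic} $\lambda$ and is false otherwise (compare the vertex cone of a hexagon in type $A_2$ with that of a degenerate triangle $\mathsf{P}^\Phi(\varpi_1^\vee)$). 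Since the Berline--Vergne coefficient $\nu$ depends on the transverse cone, the local coefficients $\alpha_J^\Phi(\lambda)$ jump when $\lambda$ hits a wall, and there is no reason on the face of it that the resulting (different) linear combination of volumes should evaluate to the same polynomial. The function $\nu$ is not continuous in any useful sense as the cone degenerates, so one cannot argue by limits either.

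The paper fills this gap with two additional results that your outline omits: Proposition~\ref{prop:quasi-polynomiality} shows directly, via the Minkowski decomposition $\mathsf{Q}^\Phi(\lambda)=\sum_i m_i\mathsf{Q}^\Phi(\varpi_i^\vee)$ and McMullen's theorem, that $|\!\leq\!\theta(\lambda)|$ is a multivariate quasi-polynomial in $(m_1,\dots,m_n)$ for \emph{all} dominant $\lambda$; and Lemma~\ref{lem:aux} shows that two quasi-polynomials agreeing on $\mathbb{Z}_{>0}^n$ must agree everywhere. Combining these with the generic case (where the two sides of \eqref{geofordef} agree) forces equality at the non-generic points as well. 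Without this quasi-polynomiality-plus-density step, the statement for non-generic $\lambda$ does not follow from your argument.
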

This implies that if $\Phi$ has rank $n$ and ${\lambda=(m_i)_{i\in I_n}}$ in the fundamental coweight basis, then ${|\leq\theta(\lambda)|}$ is a polynomial of degree $n$ in the ${m_1,\ldots,m_n}$. Taking the sum over a fixed rank ${|J|=d}$ gives the degree $d$ part of the polynomial.
We call the coefficients $\mu_J^\Phi$ the \textit{geometric coefficients}.
First we review some more concepts from discrete geometry.

\subsection{Transverse cones}
Let $\Gamma\subset V$ be a lattice.
A polytope $\mathsf{P}$ in $V$ is called a \emph{lattice} polytope if all its vertices are in $\Gamma$; it is called \emph{rational} if an integer dilation of it is a lattice polytope.
A pointed cone $\mathsf{C}$ is rational if its vertex is a lattice point and every ray (1-dimensional face) contains a lattice point.

Let $L\subset V$ be a linear subspace.
If $L$ has a basis consisting of elements of $\Gamma$, then $\Gamma/L$ is also a lattice in the quotient $V/L$ by \cite[Corollary 10.3]{barvinok2008integer}.
Since $ V $ has an inner product we can define $ L^{\perp}. $ There is a canonical isomorphism between $ V/L $ and $ L^{\perp} $ under which the map $ V \to V/L $ corresponds to the orthogonal projection to $ L^{\perp} $.
Notice that in general the set $\Gamma \cap L^{\perp}$ is different from the projection of $\Gamma$ into $L^{\perp}$ as can be seen in Figure \ref{fig:enter-label}.
For each facet $\mathsf{G}$ of $\mathsf{P}$ we define an outer normal as an element $u_G\in V$ such that every point in the polytope satisfies the inequality $\langle u_{\mathsf{G}}, v\rangle \leq b_{\mathsf{G}}$, for some constant $b_\mathsf{G}$, with equality iff $v\in \mathsf{G}$.
If $\mathsf{P}$ is a lattice polytope, then $u_{\mathsf{G}}$ can be chosen to be a lattice vector. Furthermore, if $\mathsf{P}$ is full dimensional then $u_\mathsf{G}$ is unique up to positive scalar. 

\begin{figure}[ht]
    \centering
    \includegraphics[scale=2]{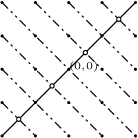}
    \caption{}
    \label{fig:enter-label}
\end{figure}

\begin{definition}
\label{def:conos}
Let $\Gamma \subset V$ a lattice and let $ \mathsf{P} $ be a full dimensional  lattice polytope.
For every face $ \mathsf{F} \subset \mathsf{P} $ let $ H $ be its affine span, $ L $ the corresponding linear subspace.
We define the following cones:

\begin{itemize}

    \item The \textbf{feasible cone} $\mathsf{f}(\mathsf{F},\mathsf{P}) = \{ v\in V ~:~ \exists\  \epsilon>0 \text{ such that } x + \epsilon v \in P \}$, where $x$ is a point in the relative interior of $\mathsf{F}$, i.e., a point in $\mathsf{F}$ that does not belong to any proper face of it.
    The feasible cone is independent of the interior point $x$.
    \item The \textbf{supporting cone} $\mathsf{s}(\mathsf{F},\mathsf{P}) := H + \mathsf{f}(\mathsf{F},\mathsf{P})$. By definition this is a translation of the feasible cone.
    \item The \textbf{transverse cone} $ \mathsf{t}( \mathsf{F} , \mathsf{P} ) = \mathsf{s}(\mathsf{F},\mathsf{P})/L \subset V/L = L^\perp \subset V$.
    \item The \textbf{normal cone} $ \mathsf{n}( \mathsf{F} , \mathsf{P} ) = \cone \{ u_\mathsf{G}~:~\mathsf{G}\text{ is a facet such that } \mathsf{F}\subset \mathsf{G}\}$.
    %Alternatively we can write $\mathsf{n}( \mathsf{F} , \mathsf{P} ) = \{v\in V ~:~ \max_{x \in \mathsf{P}} \langle v,x\rangle = \langle v,f\rangle \quad \forall f\in \mathsf{F}\}$.
\end{itemize}
For any of these cones, we simply say ``the cone of $\mathsf{F}$'' when $\mathsf{P}$ is clear from context.
\end{definition}

The first three cones are visibly related.
For any (possibly non-pointed) cone $\mathsf{C}$ that includes the origin, we define its \emph{polar}.
\[
\mathsf{C}^\circ = \{v\in V~:~( v,w ) \leq 0, \forall w\in \mathsf{C}\}.
\]
The normal cone is the polar of the feasible cone, see \cite[Theorem 6.46]{rockafellar2009variational} (in that source the feasible cone is called the tangent cone).
This set of definitions begs for an example.

\begin{example}
Consider the lattice $\mathbb{Z}^2$ in $\mathbb{R}^2$ and the lattice polygon given by
\[
\mathsf{P}
=
\conv
\begin{bmatrix}
-3 & -4 & -3 & 0 \\
1 & 3 & 4 & 2
\end{bmatrix}
\]

Let's first analyze the vertex $v=(-3,1)$.
Figure \ref{fig:cone_vertex} illustrates the four cones.
The supporting cone is the cone with vertex $v$ and emanating towards the polytope.
The feasible cone is the translation of this cone that places the vertex in the origin.
The linear space spanned by $v$ is trivial so $V/L = V$ and thus the transverse cone agrees with the supporting cone.
The vertex $v$ is contained in two facets, which are edges in this case.
The vectors $(-2,-1)$ and $(1,-3)$ are outer normals of the edges $\{(-3,1),(-4,3)\}$ and $\{(-3,1), (0,2)\}$, respectively.
%The edge $\{(-3,1),(-4,3)\}$ has outer normal $(-2,-1)$ (but this is not unique, as for the point $(-4,-2)$ is also an outer normal).
%Likewise the edge $\{(-3,1), (0,2)\}$ has outer normal $(1,-3)$.
These two outer normals (or rather a dilation) are depicted in Figure \ref{fig:cone_vertex} as dashed arrows perpendicular to the faces.
The normal cone is the cone with vertex at the origin spanned by these two vectors.

\label{ex:conos}
\begin{figure}[ht]
    \centering
    \begin{minipage}{0.45\textwidth}
        \centering
        \includegraphics[scale=1.2]{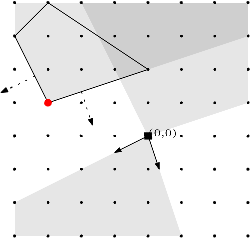} 
        \caption{The four cones in the case of the vertex (-3,1) of the lattice polygon. Transverse and supporting are the same in this case.}
	\label{fig:cone_vertex}
    \end{minipage}\hfill
    \begin{minipage}{0.45\textwidth}
        \centering
        \includegraphics[scale=1.2]{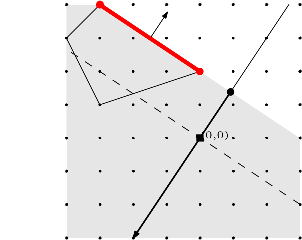} % second figure itself
        \caption{The transverse cone of the edge $\{(-3,4),(0,2)\}$. Its direction is the negative of the outer normal.}
	\label{fig:cone_general}
    \end{minipage}
\end{figure}

Now let's analyze the edge $\{(-3,4),(0,2)\}$.
In Figure \ref{fig:cone_general} the supporting cone and the transverse cone are shown.
First note that the linear subspace $L$ corresponding to this edge is generated by the vector $(3,-2)$ and it is shown as a dashed line through the origin.
The feasible cone (not depicted) is everything below that line.
The supporting cone is everything below the line spanned by the edge (note how it is not pointed).
The transverse cone is the projection to the orthogonal complement of $L$.
The outer normal can be chosen to be $(2,3)$, and it is depicted as an arrow perpendicular to the edge. The normal cone is generated by $(2,3)$, which is an outer normal of the edge.
Notice that the normal cone is a ray that is polar to the feasible cone.
\end{example}

We can now explain a less trivial example, the transverse cones for orbit polytopes.
We use the notation $\bar{v}$ to denote the image of $V$ in a quotient map.

\begin{lemma}
\label{lem:tcones}
Let $\Phi\subset V$ be a root system and let $\lambda\in C^+$ be generic.
Let $\mathsf{F}_J(\lambda) \subset \mathsf{P}^\Phi(\lambda)$ be as in Definition \ref{def:face_representative}.
Then 
\begin{equation}
\label{eq:tcone_orbit}
\mathsf{t}(\mathsf{F}_J(\lambda),\mathsf{P}^\Phi(\lambda)) = \bar{\lambda} + \cone\{\bar{-\alpha^\vee_i}\,:\,i \notin J\} \subset V/L_J,
\end{equation}
where $L_J$ is defined in Equation \eqref{eq:face_linear}.
\end{lemma}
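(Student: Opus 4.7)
The plan is to unwind the chain of definitions that leads from the polytope's combinatorics to the transverse cone. The key identification is that the fundamental coweight $\varpi_i^\vee$ serves as an outer normal to the facet $\mathsf{F}_{I_n\setminus\{i\}}(\lambda)$; once this is in place, the rest reduces to linear algebra in the dual bases $\{\alpha_i^\vee\}$ and $\{\varpi_i^\vee\}$.

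First, since $\lambda$ is generic, Corollary \ref{cor:generic} implies that every facet of $\mathsf{P}^\Phi(\lambda)$ through $\lambda$ has the form $\mathsf{F}_{I_n\setminus\{i\}}(\lambda)$ for some $i\in I_n$, and Equation \eqref{eq:face_J_ineqs} exhibits $\mathsf{F}_J(\lambda)$ as the intersection with those facets for which $i\in J^c$. Each such facet lies on the hyperplane $H_i(\lambda)$ from \eqref{eq:Hj}. I would then verify that $\varpi_i^\vee$ is an \emph{outer} normal by invoking the standard fact that for every $w\in W_f$ the vector $\lambda-w\lambda$ lies in the $\mathbb{Z}_{\geq 0}$-span of simple coroots, combined with $(\varpi_i^\vee,\alpha_j^\vee)\geq 0$; this yields the inequality $(\varpi_i^\vee,w\lambda)\leq (\varpi_i^\vee,\lambda)$ on the whole $W_f$-orbit, and hence on the polytope. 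Consequently the normal cone of $\mathsf{F}_J(\lambda)$ equals $\cone\{\varpi_i^\vee:i\in J^c\}$, and polar duality gives
$$\mathsf{f}(\mathsf{F}_J(\lambda),\mathsf{P}^\Phi(\lambda))=\{v\in V:(\varpi_i^\vee,v)\leq 0 \text{ for all } i\in J^c\}.$$

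Next I would rewrite the feasible cone in the coroot basis. Expanding $v=\sum_i c_i\alpha_i^\vee$ and using $(\varpi_j^\vee,\alpha_i^\vee)=\tfrac{2}{(\alpha_i,\alpha_i)}\delta_{ij}$, the inequalities $(\varpi_i^\vee,v)\leq 0$ for $i\in J^c$ become the sign conditions $c_i\leq 0$ for $i\in J^c$, while the coefficients $c_j$ with $j\in J$ remain unconstrained. Since $\alpha_j^\vee$ is a positive multiple of $\alpha_j$, the span of $\{\alpha_j^\vee:j\in J\}$ is precisely $L_J$, so
$$\mathsf{f}(\mathsf{F}_J(\lambda),\mathsf{P}^\Phi(\lambda))=L_J+\cone\{-\alpha_i^\vee:i\in J^c\}.$$

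Finally, using $H_J(\lambda)=\lambda+L_J$ from \eqref{eq:face_span} and the fact that $L_J$ is already contained in the feasible cone, the supporting cone collapses to $\lambda+L_J+\cone\{-\alpha_i^\vee:i\in J^c\}$. Quotienting by $L_J$ annihilates the $L_J$ summand and leaves exactly
$$\mathsf{t}(\mathsf{F}_J(\lambda),\mathsf{P}^\Phi(\lambda))=\bar{\lambda}+\cone\{\bar{-\alpha_i^\vee}:i\in J^c\},$$
as desired. The only delicate point is the sign verification in the first paragraph: making sure that it is $\varpi_i^\vee$ and not $-\varpi_i^\vee$ that points outward from the polytope; after that the argument is a straightforward bookkeeping exercise in the biorthogonal pairing between coroots and fundamental coweights.
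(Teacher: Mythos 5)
Your argument is correct and follows essentially the same route as the paper's proof: identify the facets through $\lambda$ via Corollary~\ref{cor:generic}, conclude that the normal cone of $\mathsf{F}_J(\lambda)$ is $\cone\{\varpi_i^\vee : i\notin J\}$, take the polar to get the feasible cone as $\{v : (\varpi_i^\vee,v)\leq 0,\ i\notin J\}$, rewrite this as $L_J + \cone\{-\alpha_i^\vee : i\notin J\}$ in the coroot basis, and pass to the quotient $V/L_J$. The one place where you go beyond the paper's text is the explicit verification that $\varpi_i^\vee$ is the \emph{outer} normal — the paper simply asserts this, relying implicitly on \eqref{eq:Hj}, whereas you derive it from $\lambda - w\lambda \in \mathbb{Z}_{\geq 0}\Phi^\vee$ and the nonnegativity of $(\varpi_i^\vee,\alpha_j^\vee)$; that extra detail is welcome but does not change the structure of the argument.
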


\begin{proof}

We put $\mathsf{F}_J=\mathsf{F}_J(\lambda)$ and $\mathsf{P}=\mathsf{P}^\Phi(\lambda)$ for simplicity.
The linear space generated by $F_J$ is $L_J$.
Note that the facets of $\mathsf{P}$ containing $\mathsf{F}_J$, are precisely $\mathsf{F}_{I_n\setminus\{i\}}$ with $i\notin J$, since $\lambda$ is generic.
It follows that the normal cone of $\mathsf{F}_J$ is generated by $\{\varpi_i^\vee\,:\, i\notin J\}$. 
As the feasible cone is polar to the normal cone, we have that 
\begin{align*}
    \mathsf{f}(\mathsf{F}_J,\mathsf{P}) 
    &= \{v\,:\, (v,\varpi^\vee_i)\leq 0 \text{ for all } i\notin J\}\\
    &= \spa\{\alpha^\vee_j\,:\,j\in J\}+\cone\{-\alpha^\vee_i\,:\,i\notin J\}.
\end{align*}
By definition, $\mathsf{s}(\mathsf{F}_J,\mathsf{P})=\lambda+\mathsf{f}(\mathsf{F}_J,\mathsf{P})$ and $\mathsf{t}(\mathsf{F}_J,\mathsf{P})=\overline{\mathsf{s}(\mathsf{F}_J,\mathsf{P})}$.
Equation \eqref{eq:tcone_orbit} follows, since $\spa\{\alpha^\vee_j\,:\,j\in J\}=L_J$. 
\end{proof}

% The linear space generated by the face is $L_J$.
% By the description of Equation \eqref{eq:face_J_ineqs} it follows that a set of outer normals of the facets containing $\mathsf{F}_J$ can be taken to be $\{\varpi_j~:~ j\notin J\}$.
% It follows that the normal cone is $\cone\{\varpi_j~:~ j\notin J\}$.
% By the the duality between $\varpi$ and $\alpha$ we have that the feasible cone, which is the polar of the normal cone, is $\cone\{-\alpha_j~:~ j\notin J\}$ \textcolor{blue}{plus the linear space $\spa\{-\alpha_j~:~ j\in J\}$}.
% Then the supporting cone is $H_J(\lambda) + \cone\{-\alpha_j~:~ j\notin J\}$ and the transverse cone is its image in $V/L_J$ which is what Equation \eqref{eq:tcone_orbit} claim.

\subsection{Euler-Maclaurin formulas}
The following is the  \emph{Euler-Maclaurin formula} developed by Berline and Vergne \cite{berline2007local} (see also \cite[Chapters 19-20]{barvinok2008integer} for an exposition).
There exist a function $\nu$ on pointed rational cones such that the following is true for all lattice polytopes $ \mathsf{P}. $
\begin{equation}\label{eq:local-formula}
|\mathsf{P}\cap \Gamma| = \sum_{\mathsf{F}\subseteq \mathsf{P}} \nu\left( \mathsf{t}( \mathsf{F} , \mathsf{P}) \right)\text{relVol} (\mathsf{F}),
\end{equation}
where the sum is indexed over all nonempty faces of $\mathsf{P}$. 
The relative volume $\mathrm{relVol}(\mathsf{F})$ of a face is the volume form on its affine span $H$ normalized with respect to the lattice $\Gamma\cap L$, where $L$ is the linear subspace parallel to $H$. More precisely,
\begin{equation}
\label{eq:rel_vol}
\relvol(\mathsf{F}) = \frac{\vol(\mathsf{F})}{\det(\Gamma\cap L)}.
\end{equation}

\begin{remark}
To be more precise, Berline and Vergne's main construction in \cite{berline2007local} is a function $\mu$ that maps pointed rational cones to meromorphic functions \cite[Section 4]{berline2007local}.
In this paper we only use the function $\nu$ which is $\mu$ evaluated at zero \cite[Definition 25]{berline2007local}, and then Equation \eqref{eq:local-formula} is equivalent to \cite[Theorem 26]{berline2007local} when the function $h$ is the constant function equal to $1$.

Alternatively, we are using the construction of Pommersheim and Thomas in \cite{pommersheim2004cycles} in the case where the complement map is given by an inner product, see \cite[Corollary 1 (iv)]{pommersheim2004cycles}.
Both constructions are involved and only in few cases the actual value of the $\nu$ function is known.
See \cite{fischer2022algebraic} for a purely combinatorial construction of this function.
\end{remark}

%For a lattice polytope $ \mathsf{P} $ and face $ \mathsf{F} \subset \mathsf{P} $, the affine space $ H = \text{aff}(\mathsf{F}) $ has the associated lattice $ \Gamma \cap H $.
%The relative volume is the volume on $ H $ normalized with respect to $ \Gamma \cap L $.
We remark that for a single polytope $P$, it is obvious that there will be a formula of the sort.
The interesting part of Berline-Vergne's  theorem is that the 
 $\nu$ function satisfies Equation \eqref{eq:local-formula} for all lattice polytopes simultaneously and has certain local properties (see Lemma \ref{lem:orbit_nu}).

\begin{example}[Pick's Theorem]\label{Pick}
Let $ \mathbb{Z}^2 \subset \mathbb{R}^2 $ be the ambient lattice and space and $ \mathsf{P} = \conv\{(1,0),(3,0),(4,3),(4,4),(3,4),(0,1)\} $ be the lattice polygon depicted in Figure \ref{fig:pick}.
Equation \eqref{eq:local-formula} states that the total 15 lattice points in the polygon can be accounted in the following way.
\begin{description}
\item[Dimension 2] The contribution to the sum of the whole polygon is its relative volume, which in this case is simply its area: $ 19/2 $. This is because the $ \nu $ function applied to the transverse cone of the whole polygon is 1 (one can prove this using that the $\nu$ function applied to a singleton gives $1$). 
\item[Dimension 1] For each edge the $ \nu $ value of the corresponding transverse cone is always $ 1/2 $. Additionally, we compute the relative volume. For example in the edge $(0,1)-(3,4)$ we normalize the volume in the spanning subspace so that the fundamental parallelepiped $(0,0)-(1,1)$ has area one. 
This segment then has relative volume of $3$.
The total contribution of the edges is equal to $ (3+1+1+1+2+1)/2 = 9/2$
\item[Dimension 0] For each vertex the relative volume is equal to one. So we are simply adding the $ \nu $ values. In this case, they are not all equal, but  there is a property of   $ \nu $ called \emph{valuation} that says that they add up to $ 1 $ when summed over the transverse cones of vertices.
\end{description}
Extrapolating this example we get that for any lattice polygon $ \mathsf{P} $ we have the formula
\begin{equation}\label{eq: pick}
|\mathsf{P} \cap \mathbb{Z}^{2}| = \text{Area}(\mathsf{P}) + \frac{1}{2}\text{Boundary Points}(\mathsf{P}) + 1,
\end{equation}
which is known as Pick's formula.

\begin{figure}[ht]
\centering
\includegraphics{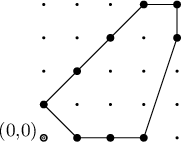} 
\caption{A lattice polygon with 15 lattice points.}
\label{fig:pick}
\end{figure}
\end{example}

Now we analyze these concepts in the case of generic orbit polytopes.
Recall that we know their face structure by Corollary \ref{cor:generic}, in particular their $W_f$-orbits of faces are in bijection with subsets $ J \subset I_n $.
Also, we introduce a translation of orbit polytopes
\begin{equation}
    \label{eq:Q_def}
\mathsf{Q}^\Phi(\lambda) = \mathsf{P}^\Phi(\lambda) - \lambda.
\end{equation}
The polytope $\mathsf{Q}^\Phi$ is always a $\mathbb{Z}\Phi^\vee$-lattice polytope, which simplifies some considerations below.
Its faces are $\mathsf{G}_J(\lambda,w) := w\mathsf{F}_J(\lambda)-\lambda$ for all pairs of $w\in W_f, J\subset I_n$.
We define $\mathsf{G}_J(\lambda):=\mathsf{G}_J(\lambda,\mathrm{id}).$

\begin{lemma}
\label{lem:orbit_nu}
Let $ \Phi $ be a root system of rank $n$, $J\subset I_n$, $ \lambda $ a \textbf{generic} element of $(\Lambda^\vee)^+$ and $\mathsf{Q}^\Phi(\lambda)$ defined as in Equation \eqref{eq:Q_def}.
Then
\begin{enumerate}
    \item The $\nu$ value of the transverse cone of $\mathsf{G}_J(\lambda)$ in $\mathsf{Q}^\Phi(\lambda)$  is independent of $\lambda$. 
    
    % For any other $\mu\in(\Lambda^\vee)^+$ generic, the transverse cone of $\mathsf{G}_J(\mu)$ in $\mathsf{Q}^\Phi(\mu)$ is the same as that of $\mathsf{G}_J(\lambda)$ in $\mathsf{Q}^\Phi(\lambda)$.
    % Hence, they have the same $\nu$ value.
    
     %$\nu(\mathsf{t}(F^{\Phi}_J(\lambda)-\lambda,Q^{\Phi}(\lambda))$ is equal to  $\nu(\mathsf{t}(F^{\Phi}_J(\mu)-\mu,Q^{\Phi}(\mu))$.

    %{\color{blue} $\nu(\mathsf{t}(F^{\Phi}_J(\lambda),P^{\Phi}(\lambda))= \nu(\mathsf{t}(F^{\Phi}_J(\mu),P^{\Phi}(\mu))$.}
    \item The $\nu$ value of the transverse cones of  $\mathsf{G}_J(\lambda)$ and $\mathsf{G}_J(\lambda,w)$ are equal for all $w\in W_f$. 
    
    % For $w\in W_f$ we have that the $\nu$ values of the transverse cones of faces $\mathsf{G}_J(\lambda)$ and $\mathsf{G}_J(\lambda,w)$ are equal.
    %\item For $w\in W_f$ we have that $\nu(\mathsf{t}(F^{\Phi}_J(\lambda)-\lambda,Q^{\Phi}(\lambda))$ is equal to  $\nu(\mathsf{t}(w\cdot F^{\Phi}_J(\lambda)-\lambda,Q^{\Phi}(\lambda))$ for any $w\in W_f$.
    %Ojo que estamos abusando notacion.
    
    \item For $w\in W_f$ we have that $\vol(\mathsf{G}_J(\lambda))=\vol(\mathsf{G}_J(\lambda,w))$. Furthermore, $\relvol(\mathsf{G}_J(\lambda))=\relvol(\mathsf{G}_J(\lambda,w))$.
    %$\vol(\mathsf{F}_J(\lambda)-\lambda) = \vol(w\cdot\mathsf{F}_J(\lambda)-\lambda)$.

\end{enumerate}
\end{lemma}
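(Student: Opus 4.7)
The plan is to exploit two structural facts: translation invariance of the feasible, supporting, and transverse cone constructions; and $W_f$-equivariance of the whole setup, since $W_f$ acts by orthogonal isometries that fix $\mathsf{P}^\Phi(\lambda)$ and preserve the coroot lattice $\mathbb{Z}\Phi^\vee$. The Berline--Vergne function $\nu$ is equivariant under lattice-preserving linear isomorphisms; I would use that property for parts (1) and (2), while part (3) reduces to orthogonality of $W_f$.

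For part (1), I would observe that $\mathsf{G}_J(\lambda) = \mathsf{F}_J(\lambda)-\lambda$ is a face of $\mathsf{Q}^\Phi(\lambda)=\mathsf{P}^\Phi(\lambda)-\lambda$ obtained by global translation, so the feasible cone is preserved: $\mathsf{f}(\mathsf{G}_J(\lambda),\mathsf{Q}^\Phi(\lambda)) = \mathsf{f}(\mathsf{F}_J(\lambda),\mathsf{P}^\Phi(\lambda))$. The linear span of $\mathsf{G}_J(\lambda)$ is still $L_J$, so by Lemma \ref{lem:tcones} and the recipe $\mathsf{t} = \mathsf{s}/L$, the transverse cone of $\mathsf{G}_J(\lambda)$ in $\mathsf{Q}^\Phi(\lambda)$ is
$$\mathsf{t}(\mathsf{G}_J(\lambda),\mathsf{Q}^\Phi(\lambda)) = \cone\{\overline{-\alpha^\vee_i} : i \notin J\} \subset V/L_J,$$
which depends only on $\Phi$ and $J$. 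The ambient lattice $\mathbb{Z}\Phi^\vee/(\mathbb{Z}\Phi^\vee\cap L_J)$ in $V/L_J$ is also $\lambda$-independent, so $\nu$ evaluated on this pair is a function of $(\Phi,J)$ alone.

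For part (2), I would use the same translation principle to reduce to $\mathsf{P}^\Phi(\lambda)$:
$$\mathsf{t}(\mathsf{G}_J(\lambda,w),\mathsf{Q}^\Phi(\lambda)) = \mathsf{t}(w\mathsf{F}_J(\lambda),\mathsf{P}^\Phi(\lambda)), \qquad \mathsf{t}(\mathsf{G}_J(\lambda),\mathsf{Q}^\Phi(\lambda)) = \mathsf{t}(\mathsf{F}_J(\lambda),\mathsf{P}^\Phi(\lambda)).$$
Since $w\in W_f$ is an isometry that fixes $\mathsf{P}^\Phi(\lambda)$ and maps $\mathsf{F}_J(\lambda)$ to $w\mathsf{F}_J(\lambda)$, it sends the feasible cone at $\mathsf{F}_J(\lambda)$ to that at $w\mathsf{F}_J(\lambda)$, and induces a linear bijection $\overline{w}\colon V/L_J \to V/(wL_J)$ intertwining the two transverse cones. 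Because $w\mathbb{Z}\Phi^\vee=\mathbb{Z}\Phi^\vee$, this bijection maps the lattice $\mathbb{Z}\Phi^\vee/(\mathbb{Z}\Phi^\vee\cap L_J)$ onto $\mathbb{Z}\Phi^\vee/(\mathbb{Z}\Phi^\vee\cap wL_J)$, so $\overline{w}$ is a unimodular isomorphism of the relevant quotient lattices, and equivariance of $\nu$ under such isomorphisms gives equality of the $\nu$-values.

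For part (3), translation invariance and orthogonality of $w$ yield
$$\vol(\mathsf{G}_J(\lambda,w)) = \vol(w\mathsf{F}_J(\lambda)) = \vol(\mathsf{F}_J(\lambda)) = \vol(\mathsf{G}_J(\lambda)).$$
For the relative volume I would use $\mathbb{Z}\Phi^\vee\cap wL_J = w(\mathbb{Z}\Phi^\vee\cap L_J)$ (since $w$ preserves $\mathbb{Z}\Phi^\vee$) together with orthogonality of $w$ to conclude $\det(\mathbb{Z}\Phi^\vee\cap wL_J) = \det(\mathbb{Z}\Phi^\vee\cap L_J)$; plugging into \eqref{eq:rel_vol} delivers the claim. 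The only real subtlety I anticipate is making the $W_f$-equivariance of the Berline--Vergne construction precise enough to invoke cleanly, i.e.\ identifying the two quotient lattices in part (2) so that the unimodularity of $\overline{w}$ is unambiguous; once this bookkeeping is in place, all three statements follow from soft invariance properties.
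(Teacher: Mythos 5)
Your approach to parts (1) and (3) is sound and matches the paper. Part (2) as written, however, contains a genuine gap.

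The intermediate equalities you invoke,
\[
\mathsf{t}(\mathsf{G}_J(\lambda,w),\mathsf{Q}^\Phi(\lambda)) = \mathsf{t}(w\mathsf{F}_J(\lambda),\mathsf{P}^\Phi(\lambda)), \qquad \mathsf{t}(\mathsf{G}_J(\lambda),\mathsf{Q}^\Phi(\lambda)) = \mathsf{t}(\mathsf{F}_J(\lambda),\mathsf{P}^\Phi(\lambda)),
\]
are false. Translating the polytope by $-\lambda$ shifts each supporting cone, and hence each transverse cone, by the corresponding projection of $-\lambda$; this is what Lemma \ref{lem:tcones} already shows, since it gives $\mathsf{t}(\mathsf{F}_J(\lambda),\mathsf{P}^\Phi(\lambda)) = \bar\lambda + \cone\{\overline{-\alpha_i^\vee}\}$, visibly shifted by $\bar\lambda$ from the cone you (correctly) write down for $\mathsf{G}_J(\lambda)$ in part (1). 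More importantly, this shift is \emph{not} by a lattice vector: $\lambda \in \Lambda^\vee$ but in general $\lambda\notin\mathbb{Z}\Phi^\vee$, so the transverse cones of faces of $\mathsf{P}^\Phi(\lambda)$ do not have their vertex on the lattice $\mathbb{Z}\Phi^\vee$. The Berline--Vergne function $\nu$ is defined only on pointed \emph{rational} cones, and passing the argument through $\mathsf{P}^\Phi(\lambda)$ takes you outside that domain. This is precisely why the paper works with $\mathsf{Q}^\Phi(\lambda)$ rather than $\mathsf{P}^\Phi(\lambda)$ throughout.

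The repair requires one more idea than ``soft invariance.'' Working entirely in $\mathsf{Q}^\Phi(\lambda)$, one finds
\[
\mathsf{t}(\mathsf{G}_J(\lambda,w),\mathsf{Q}^\Phi(\lambda)) = w\cdot\mathsf{t}(\mathsf{G}_J(\lambda),\mathsf{Q}^\Phi(\lambda)) + \pi(w\lambda-\lambda),
\]
where $\pi$ projects onto $(wL_J)^\perp$. The crucial observation you omit is that $w\lambda-\lambda\in\mathbb{Z}\Phi^\vee$ for any coweight $\lambda$ and $w\in W_f$, so the translation \emph{is} by a lattice element. One then needs \emph{both} invariance properties of $\nu$ (under lattice-preserving orthogonal maps and under lattice translations), not only the $W_f$-equivariance your sketch relies on. Your anticipated ``bookkeeping'' in identifying the quotient lattices is fine, but the real subtlety is the appearance of the nontrivial shift and the need to recognize it as a coroot, which your argument never confronts.
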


\begin{proof}
We need to use two properties of $ \nu $.
These two results are part of the content of \cite[Proposition 14]{berline2007local}.
The following operations do not change the $\nu$ value of a transverse cone.
\begin{itemize}
    \item [i] Applying a lattice-preserving orthogonal transformation.
    \item [ii] Translating by a lattice element.
\end{itemize}
%(1)  and (2) translating a transverse cone in $V/L$ by an element of $\mathbb{Z}\Phi^\vee/L$, where $L$ is a subspace spanned by elements of $\mathbb{Z}\Phi^\vee$. 

\begin{enumerate}

\item  Since $\lambda$ is generic, by applying Lemma \ref{lem:tcones} we obtain 
\begin{equation*}
    \mathsf{t}( \mathsf{G}_J(\lambda) , \mathsf{Q}^\Phi(\lambda)) = \mathsf{t}( \mathsf{F}_J(\lambda) , \mathsf{P}^\Phi(\lambda)) -\overline{\lambda}
    = \cone\{\overline{-\alpha_j^\vee}~:~j\notin J\}.
\end{equation*}
Clearly, the right-hand side of the previous equality does not depend on $\lambda$.
Therefore, the $\nu$ value does not depend on $\lambda$ as well. 

% Thus, the transverse cone of $\mathsf{G}_J(\lambda)$ in $\mathsf{Q}^\Phi(\lambda)$ is independent of the choice of generic $\lambda$.

% For both cases the linear subspace spanned by both faces is the same $L_J$ Equation \eqref{eq:face_linear}.
% Also by Lemma \ref{lem:tcones} and the fact that we are translating by $\lambda$ and $\mu$, both tangent cones are equal in $V/L_J$, so they have the same $\nu$ value.

\item  We can rewrite 
\begin{equation}
\label{eq:F_J}
\mathsf{G}_J(\lambda,w)=w\mathsf{G}_J(\lambda)-(\lambda-w\lambda).
\end{equation}

The facets containing $\mathsf{G}_J(\lambda)$ are $\mathsf{G}_{I_n\setminus\{i\}}(\lambda)$ with $i\notin J$.
Then the facets containing $\mathsf{G}_J(\lambda,w)$ are $\mathsf{G}_{I_n\setminus\{i\}}(\lambda,w) = w\mathsf{G}_{I_n\setminus\{i\}}(\lambda)-(\lambda-w\lambda)$ for $i\notin J$, the equality by Equation \eqref{eq:F_J}.
Notice that $w\mathsf{G}_{I_n\setminus\{i\}}(\lambda)-(\lambda-w\lambda)$ is simply a translation of $w\mathsf{G}_{I_n\setminus\{i\}}(\lambda)$ and as such it has the same outer normal.

Since $w$ as a linear transformation preserves the inner product, then the set $\{w\varpi_i\,:\, i\notin J\}=w\{\varpi_i\,:\, i\notin J\}$ are the outer normals of the facets containing $\mathsf{G}_J(\lambda,w)$.
This shows that 
\begin{equation}
\label{eq:moving_feasible}
\mathsf{n}(\mathsf{G}_J(\lambda,w),\mathsf{Q}^\Phi(\lambda)) = w\cdot \mathsf{n}(\mathsf{G}_J(\lambda),\mathsf{Q}^\Phi(\lambda)).
\end{equation}

This implies that
\small
$$\mathsf{s}(\mathsf{G}_J(\lambda,w),\mathsf{Q}^\Phi(\lambda)) - \left(w\lambda-\lambda\right) = \mathsf{f}(\mathsf{G}_J(\lambda,w),\mathsf{Q}^\Phi(\lambda)) = w \mathsf{f}(\mathsf{G}_J(\lambda),\mathsf{Q}^\Phi(\lambda))= w \mathsf{s}(\mathsf{G}_J(\lambda),\mathsf{Q}^\Phi(\lambda)).$$
\normalsize

In the first equality, we subtract only a vector of the supporting cone because the linear span of the face $\mathsf{G}_J(\lambda,w)$ is contained in the feasible cone.
The inner equality follows by taking the polar on both sides of Equation \eqref{eq:moving_feasible}. Indeed notice that $\langle v,C\rangle \leq 0$ if and only if $\langle wv,wC\rangle \leq 0$, so the polar is also compatible with multiplying by $w$. The last equality follows from the first one.

Rearranging we obtain
\begin{equation}\label{eq:supporting}
\mathsf{s}(\mathsf{G}_J(\lambda,w),\mathsf{Q}^\Phi(\lambda)) = w \mathsf{s}(\mathsf{G}_J(\lambda),\mathsf{Q}^\Phi(\lambda))+\left(w\lambda -\lambda\right).
\end{equation}

Notice that the linear span of the face $\mathsf{G}_J(\lambda,w)$ is  $wL_J$.
Also, as $w$ is an orthogonal transformation, $wL_J^\perp=(wL_J)^\perp$.
So we can project both sides of Equation \eqref{eq:supporting} to arrive at

\begin{equation*}
    \mathsf{t}( \mathsf{G}_J(\lambda,w) , \mathsf{Q}^\Phi(\lambda)) = w\mathsf{t}( \mathsf{G}_J(\lambda) , \mathsf{Q}^\Phi(\lambda))+\pi({w\lambda-\lambda})\subset wL_J^\perp.
\end{equation*}

Here $\pi({w\lambda-\lambda})$ is the image of $w\lambda-\lambda$ under the orthogonal projection to $wL_J^\perp$.
The second vector is an element of the projected lattice $\mathbb{Z}\Phi^\vee$ in $wL_J^\perp$ so $w\mathsf{t}( \mathsf{G}_J(\lambda) , \mathsf{Q}^\Phi(\lambda))-\pi({w\lambda-\lambda})$ has the same $\nu$-value as $w\mathsf{t}( \mathsf{G}_J(\lambda), \mathsf{Q}^\Phi(\lambda))$.
The linear map $w$ is a lattice-preserving orthogonal transformation, so by the properties mentioned above
$w\mathsf{t}( \mathsf{G}_J(\lambda))$ and $\mathsf{t}( \mathsf{G}_J(\lambda))$ have the same $\nu$-value.
Putting everything together, both cones $\mathsf{t}( \mathsf{G}_J(\lambda))$ and $\mathsf{t}( \mathsf{G}_J(\lambda,w))$ have the same $\nu$-value.

\item The two faces differ by applying $w$ and translating. Since $ W_f $ is a subgroup of the orthogonal group of $V$, the volume does not change by translations and multiplications by elements in $W_f$. This proves the first claim. 

For the second claim, we notice that the restrictions of the lattice to both faces are isomorphic by the orthogonal transformation $w$, so both fundamental parallelepipeds have the same volume and hence the relative volume of both faces agree. 
\end{enumerate}
\end{proof}

As a consequence of the last item of Lemma \ref{lem:orbit_nu} we have that the quotient between the relative volume and the volume does not depend on $w$.
It doesn't depend on $\lambda$ either, as $L_J$ is the linear subspace spanned by it.
We define
\begin{equation}
\label{eq:v_J}
v_J := \frac{\relvol(\mathsf{G}_J(\lambda,w))}{\vol(\mathsf{G}_J(\lambda,w))}
= \frac{1}{\det(\mathbb{Z}\Phi^\vee \cap L_J)},
\end{equation}
where the second equation follows from Equation \eqref{eq:rel_vol}.

\subsection{Proof of the geometric formula}

We first prove the existence of such a formula for $\lambda$ generic.
\begin{prop}\label{prop:geometric-formula-generic}
For every root system $\Phi$, there exists ${\mu_J^{\Phi}\in\mathbb{R}}$ such that for any \textbf{generic} dominant coweight $\lambda$,
\begin{equation}
|\leq\theta(\lambda)|=\sum_{J\subset I_n}\mu_J^{\Phi}V_J^{\Phi}(\lambda).
\end{equation}
\end{prop}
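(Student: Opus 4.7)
The plan is to combine the Lattice Formula (Theorem \ref{thm:first-lattice-formula}) with the Berline--Vergne Euler--Maclaurin formula \eqref{eq:local-formula}, grouping the contributions of the faces into $W_f$-orbits using the genericity of $\lambda$.

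First, by Theorem \ref{thm:first-lattice-formula} and a translation,
$$|\leq\theta(\lambda)| = |W_f| \cdot |\mathsf{P}^\Phi(\lambda) \cap (\lambda + \mathbb{Z}\Phi^\vee)| = |W_f| \cdot |\mathsf{Q}^\Phi(\lambda) \cap \mathbb{Z}\Phi^\vee|,$$
where $\mathsf{Q}^\Phi(\lambda)$ is the $\mathbb{Z}\Phi^\vee$-lattice polytope defined in \eqref{eq:Q_def}. Now I would apply the Berline--Vergne formula \eqref{eq:local-formula} to $\mathsf{Q}^\Phi(\lambda)$ with respect to the lattice $\mathbb{Z}\Phi^\vee$, obtaining
$$|\mathsf{Q}^\Phi(\lambda) \cap \mathbb{Z}\Phi^\vee| = \sum_{\mathsf{G}} \nu\bigl(\mathsf{t}(\mathsf{G}, \mathsf{Q}^\Phi(\lambda))\bigr)\, \relvol(\mathsf{G}),$$
where the sum runs over all nonempty faces of $\mathsf{Q}^\Phi(\lambda)$.

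Since $\lambda$ is generic, Corollary \ref{cor:generic} tells us that the $W_f$-orbits of faces are in bijection with subsets $J \subset I_n$, and each orbit has cardinality $[W_f:W_J]$ with distinguished representative $\mathsf{G}_J(\lambda)$. By Lemma \ref{lem:orbit_nu}(1)--(2), the $\nu$-value of the transverse cone is constant on each orbit and does not depend on $\lambda$; denote this constant by $\nu_J^\Phi$. By Lemma \ref{lem:orbit_nu}(3), the relative volume is also constant on each orbit, equal to $\relvol(\mathsf{G}_J(\lambda))$. Collecting orbit contributions gives
$$|\mathsf{Q}^\Phi(\lambda) \cap \mathbb{Z}\Phi^\vee| = \sum_{J \subset I_n} [W_f : W_J]\, \nu_J^\Phi\, \relvol(\mathsf{G}_J(\lambda)).$$

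Finally I would convert relative volume to Euclidean volume. By \eqref{eq:v_J}, $\relvol(\mathsf{G}_J(\lambda)) = v_J \cdot \vol(\mathsf{G}_J(\lambda))$ with $v_J$ depending only on $\Phi$ and $J$, and $\vol(\mathsf{G}_J(\lambda)) = V_J^\Phi(\lambda)$ because $\mathsf{G}_J(\lambda)$ is a translate of $\mathsf{F}_J(\lambda)$. Multiplying through by $|W_f|$ and setting
$$\mu_J^{\Phi} := |W_f| \cdot [W_f : W_J] \cdot \nu_J^\Phi \cdot v_J$$
yields constants depending only on $\Phi$ and $J$ for which the claimed identity holds.

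The conceptually delicate point, and the main technical obstacle, is that the coefficients in the resulting formula must depend only on $J$ and not on $\lambda$. This is precisely guaranteed by Lemma \ref{lem:orbit_nu}: the $\nu$-values stay invariant under the $W_f$-action on faces and, crucially, under the rescaling of $\lambda$ because generic transverse cones of orbit polytopes are always $\cone\{\overline{-\alpha_j^\vee}\,:\,j\notin J\}$ modulo $L_J$ by Lemma \ref{lem:tcones}. Everything else is bookkeeping with the Berline--Vergne decomposition.
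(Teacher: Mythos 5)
Your proof follows essentially the same route as the paper's: combine the Lattice Formula with the Berline--Vergne Euler--Maclaurin formula applied to the translated polytope $\mathsf{Q}^\Phi(\lambda)$, group faces into $W_f$-orbits via Corollary \ref{cor:generic}, and invoke Lemma \ref{lem:orbit_nu} to make the orbit contributions uniform and $\lambda$-independent, arriving at the same coefficient $\mu_J^\Phi = |W_f|\,[W_f:W_J]\,\nu_J^\Phi\,v_J$. No substantive differences.
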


\begin{proof}

Using the Lattice Formula, Equation \eqref{firstlatdef}, we have 
\begin{equation}
\label{eq:complete_lattic_formula}
|\leq\theta(\lambda)|=|W_f| \ |\mathsf{P}^\Phi(\lambda)\cap (\lambda+\mathbb{Z}\Phi^\vee)|=|W_f| \ |\big(\mathsf{P}^\Phi(\lambda) - \lambda\big) \cap \mathbb{Z}\Phi^\vee|.
\end{equation}

By Proposition \ref{prop:face_lattice}, the set of vertices of $\mathsf{P}^\Phi(\lambda)$ is
$W_f\cdot \lambda.$ By definition of a coweight and that of the action of $W_f$ on $V$, $\lambda-w(\lambda) \in \mathbb{Z}\Phi^\vee$, for $w\in W_f.$ Thus
the polytope $ \mathsf{Q}^\Phi(\lambda) = \mathsf{P}^\Phi(\lambda) - \lambda$ is a lattice polytope with respect to the lattice $ \mathbb{Z}\Phi^\vee $. 
We use Berline-Vergne formula, Equation \eqref{eq:local-formula}, to obtain
\begin{equation}\label{eq-step2}
\ |\mathsf{Q}^\Phi(\lambda) \cap \mathbb{Z}\Phi^\vee|= \sum_{\mathsf{F} \subseteq\mathsf{Q}^\Phi(\lambda)} \nu\left( \mathsf{t}( \mathsf{F} , \mathsf{Q}^\Phi(\lambda))\right) \text{relVol} (\mathsf{F}).
\end{equation}

 For $J\subset I_n$, let $\mathcal{F}_J=\{F_J'(\lambda,w)~:~w\in W_f\}$. 
By part 2 and 3 of Lemma \ref{lem:orbit_nu} and Corollary \ref{cor:generic}, we get
\begin{align*}
    |\mathsf{Q}^\Phi(\lambda) \cap \mathbb{Z}\Phi^\vee|&=\sum_{J\subset I_n}\sum_{\mathsf{F}\in\mathcal{F}_J} \nu\left( \mathsf{t}( \mathsf{F} , \mathsf{Q}^\Phi(\lambda))\right) \text{relVol} (\mathsf{F}) \\
    &=\sum_{J\subset I_n} [W_f:W_J] \, \nu\left( \mathsf{t}( \mathsf{G}_J(\lambda) , \mathsf{Q}^\Phi(\lambda))\right) \text{relVol} (\mathsf{G}_J(\lambda)).
\end{align*}
Since $\mathsf{Q}^\Phi(\lambda)$ is just a translation of $\mathsf{P}^\Phi(\lambda)$, it is clear that $V_J^\Phi(\lambda)=\vol(\mathsf{G}_J(\lambda))$.
Therefore, substituting in \eqref{eq:complete_lattic_formula} we get
\begin{equation}\label{eq-final}
|\leq\theta(\lambda)| = \sum_{J\subseteq I_n}\mu_J^{\Phi}V_J^{\Phi}(\lambda),
\end{equation}
where 
\begin{equation}
\label{eq:geometric_coefficient}
\mu_J^{\Phi} := |W_f|\frac{|W_f|}{|W_J|}\nu\left( \mathsf{t}( \mathsf{G}_J(\lambda) , \mathsf{Q}^\Phi(\lambda)) \right) v_J.
\end{equation}
Finally, Lemma \ref{lem:orbit_nu} part 1 implies that $\mu_J^{\Phi} $ does not depend on the choice of $\lambda$.
\end{proof}

A function $ g:\mathbb{Z}_{\geq0}^{n} \to \mathbb{R} $ is called a \emph{multivariate quasi-polynomial} if there exists a finite-index lattice $\mathcal{L}\subset\mathbb{Z}^n$ (i.e. $\{\mathbf{u}_i\}_{i\in I}=\mathbb{Z}^n/\mathcal{L}$, with $I$ finite), and polynomials $p_i\in\mathbb{R}[x_1,\ldots, x_n]$ such that if $\mathbf{m}:=(m_1, \ldots, m_n)\in \mathbb{Z}^n,$ for all $i\in I$ we have
$$g(\mathbf{m})=p_i(\mathbf{m}), \text{ if } \overline{\mathbf{m}}=\mathbf{u}_i.$$

\begin{prop}
\label{prop:quasi-polynomiality}
For every dominant coweight $\lambda = \sum_i m_i\varpi_i^\vee$, we have that $|\leq \theta(\lambda)|$ is a quasi-polynomial in $m_1,\dots,m_n$.
\end{prop}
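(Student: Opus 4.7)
The plan is to combine the Lattice Formula with the generic Geometric Formula and parametric Ehrhart theory. By Theorem \ref{thm:first-lattice-formula}, one has $|\leq\theta(\lambda)| = |W_f|\cdot |\mathsf{Q}^\Phi(\lambda)\cap \mathbb{Z}\Phi^\vee|$, where $\mathsf{Q}^\Phi(\lambda) = \mathsf{P}^\Phi(\lambda) - \lambda$ is a $\mathbb{Z}\Phi^\vee$-lattice polytope (since $w\lambda - \lambda \in \mathbb{Z}\Phi^\vee$ for every $w \in W_f$ and every coweight $\lambda$). Its vertices $\{w\lambda - \lambda : w\in W_f\}$ depend linearly on $\mathbf{m}$, so we are in the classical setting of parametric Ehrhart theory: $\mathsf{Q}^\Phi(\lambda)$ has facet normals drawn from a fixed finite set (the $W_f$-orbits of fundamental coweights) while its supporting constants are linear functions of $\mathbf{m}$.

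For generic $\lambda$ (all $m_i > 0$), Proposition \ref{prop:geometric-formula-generic} supplies the explicit formula $|\leq\theta(\lambda)| = p(\mathbf{m}) := \sum_J \mu_J^\Phi V_J^\Phi(\mathbf{m})$, and by Remark \ref{rmk: volumes are polynomials} this is genuinely a polynomial in $\mathbf{m}$, hence trivially a quasi-polynomial, on the open dominant chamber. To extend to arbitrary $\mathbf{m} \in \mathbb{Z}_{\geq 0}^n$, I would invoke standard parametric Ehrhart results (Brion-Vergne, Barvinok): the lattice point count is a quasi-polynomial on each chamber of the fan where the combinatorial type of $\mathsf{Q}^\Phi(\lambda)$ is constant. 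For orbit polytopes these chambers are indexed by the possible vanishing sets $Z \subseteq I_n$, and the natural candidate sublattice $\mathcal{L} \subset \mathbb{Z}^n$ is the preimage of $\mathbb{Z}\Phi^\vee \subseteq \Lambda^\vee$, which has finite index $|\Omega|$.

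The main obstacle is assembling the chamber-wise descriptions into a single quasi-polynomial on all of $\mathbb{Z}_{\geq 0}^n$. My preferred route is induction on $|Z(\lambda)|$: for $\lambda$ with vanishing set $Z \neq \emptyset$, relate the orbit polytope $\mathsf{P}^\Phi(\lambda)$ to an orbit polytope for the sub-root system $\Phi_{I_n \setminus Z}$, where $\lambda$ is strictly dominant, and apply the inductive hypothesis; one then verifies that the resulting polynomials across the various chambers combine into a quasi-polynomial with respect to $\mathcal{L}$. Alternatively, one can bypass the chamber-gluing issue by appealing to a continuity principle for the Berline-Vergne $\nu$-function underlying Proposition \ref{prop:geometric-formula-generic}: since the local contributions of faces to the Berline-Vergne formula deform continuously as $m_i \to 0$ and facets degenerate, the polynomial $p(\mathbf{m})$ should continue to count lattice points correctly on the boundary chambers. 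Either approach ultimately reduces to understanding how the Berline-Vergne formula behaves under face degenerations of orbit polytopes, and this is the most delicate technical point.
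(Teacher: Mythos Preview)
Your proposal correctly reduces the problem via the Lattice Formula to counting $\mathbb{Z}\Phi^\vee$-points in $\mathsf{Q}^\Phi(\lambda)$, and you correctly observe that the vertices depend linearly on $\mathbf{m}$. However, there is a genuine gap: you explicitly flag that assembling the chamber-wise quasi-polynomials into a single global one is ``the main obstacle'' and ``the most delicate technical point,'' and neither of your two proposed routes is carried to completion. The induction on $|Z(\lambda)|$ is problematic as stated: when some $m_i = 0$ the polytope $\mathsf{P}^\Phi(\lambda)$ remains full-dimensional in $V$ (whenever $\lambda \neq \mathbf{0}$), whereas an orbit polytope for the sub-root system indexed by $I_n \setminus Z$ lives in the proper subspace $L_{I_n \setminus Z}$, so these are different objects and no inductive mechanism is supplied to relate their lattice-point counts. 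The continuity-of-$\nu$ argument is too vague to evaluate.

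The paper's proof sidesteps all of this by exploiting a structure you almost noticed. The linearity of the vertices in $\mathbf{m}$ reflects the Minkowski decomposition
\[
\mathsf{Q}^\Phi(\lambda) \;=\; m_1\,\mathsf{Q}^\Phi(\varpi_1^\vee) + \cdots + m_n\,\mathsf{Q}^\Phi(\varpi_n^\vee),
\]
where each summand is a \emph{fixed} $\mathbb{Z}\Phi^\vee$-rational polytope (rationality because the index of connection $[\Lambda^\vee:\mathbb{Z}\Phi^\vee]$ is finite). A theorem of McMullen then asserts directly that the lattice-point count of an integer Minkowski combination of fixed rational polytopes is a single quasi-polynomial in the coefficients $(m_1,\dots,m_n)$; no chamber-gluing arises because the summands do not change, only their multiplicities do. Note also that this argument makes no use of Proposition~\ref{prop:geometric-formula-generic}: in the paper's logic, quasi-polynomiality is established independently, and only afterwards is it combined with the generic geometric formula (via Lemma~\ref{lem:aux}) to deduce the full Theorem~\ref{thm:geometric-formula}.
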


\begin{proof}
By the Lattice Formula (Theorem \ref{thm:first-lattice-formula}) it is enough to prove the quasi-polynomiality of
\begin{equation}\label{eq: quasi}
    |\mathsf{P}^\Phi(\lambda)\cap (\lambda+\mathbb{Z}\Phi^\vee )| = |\left(\mathsf{P}^\Phi(\lambda) - \lambda\right)\cap\mathbb{Z}\Phi^\vee|.
\end{equation}

%For convenience we define $\mathsf{Q}^\Phi(\lambda) := \mathsf{P}^\Phi(\lambda) - \lambda $ for any $\lambda \in V$.
Recall that the Minkowski sum of two polytopes $\mathsf{P}$ and $\mathsf{Q}$ is the polytope $\mathsf{P}+\mathsf{Q}=\conv\{p+q\,:\, p\in\mathsf{P},\, q\in\mathsf{Q}\}$.
Since $\lambda = \sum_i m_i\varpi_i^\vee$ we have the following equality of polytopes (see \cite[Proposition 6.4]{submodular}):
\begin{align}
\mathsf{P}^\Phi(\lambda) &= m_1\mathsf{P}^\Phi(\varpi_1^\vee)+m_2\mathsf{P}^\Phi(\varpi_2^\vee)+\dots+m_n\mathsf{P}^\Phi(\varpi_n^\vee),\\
\mathsf{Q}^\Phi(\lambda) &= m_1\mathsf{Q}^\Phi(\varpi_1^\vee)+m_2\mathsf{Q}^\Phi(\varpi_2^\vee)+\dots+m_n\mathsf{Q}^\Phi(\varpi_n^\vee) \label{eq:minkowski_sum},
\end{align}
where $\mathsf{Q}^\Phi$ is defined in \eqref{eq:Q_def}.
In Equation \eqref{eq:minkowski_sum} every polytope on the right-hand side is  $\mathbb{Z}\Phi^\vee$-rational since the index of connection is finite. 
 Following McMullen \cite[Theorem 7]{mcmullen1978lattice} we have that the number of lattice points in an integer Minkowski sum of rational polytopes is a quasi-polynomial in the dilation factors.
This implies that the right-hand side of $\eqref{eq: quasi}$ is a quasi-polynomial in $m_1,\ldots,m_n$.
% so that by the Lattice Formula, $|\leq \theta(\lambda)|$ is a quasi-polynomial.
\end{proof}

We will now prove that $|\leq \theta(\lambda)|$ is an honest-to-god polynomial.
%To prove this we argue first on the generic case, where $m_1,\dots, m_n$ are all positive and conclude using the following lemma.

\begin{lemma}
\label{lem:aux}
Let $h$ be a quasipolynomial in $n\geq 1$ variables such that $h(\mathbf{m})=0$ whenever $\mathbf{m} \in \mathbb{Z}_{>0}^n$.
Then $h$ is identically zero.
Consequently, if two quasipolynomials agree in $\mathbb{Z}_{>0}^n$,  they agree everywhere.
\end{lemma}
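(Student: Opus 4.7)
The plan is to reduce the claim to the classical fact that a polynomial in $n$ variables vanishing on a full-dimensional integer grid must be identically zero, and then apply this to each polynomial piece of the quasipolynomial.

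First, I would unfold the definition: by hypothesis there is a finite-index sublattice $\mathcal{L}\subset\mathbb{Z}^n$, a system of coset representatives $\{\mathbf{u}_i\}_{i\in I}$ of $\mathbb{Z}^n/\mathcal{L}$, and polynomials $p_i\in\mathbb{R}[x_1,\ldots,x_n]$ such that $h(\mathbf{m})=p_i(\mathbf{m})$ whenever $\overline{\mathbf{m}}=\mathbf{u}_i$. It suffices to show that each $p_i$ is the zero polynomial. Fix an index $i$. Since $\mathcal{L}$ has finite index, there exists a positive integer $N$ with $N\mathbb{Z}^n\subset\mathcal{L}$ (for instance $N=[\mathbb{Z}^n:\mathcal{L}]$). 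Choose a representative $\mathbf{v}_i$ of the coset $\mathbf{u}_i+\mathcal{L}$ that lies in $\mathbb{Z}_{>0}^n$; such a representative exists because we can shift any representative by $k(1,\ldots,1)\in N\mathbb{Z}^n$ after replacing $1$ by $N$, or more simply by adding a large enough multiple of $N$ to every coordinate.

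Next, I would observe that the set
\begin{equation*}
\mathbf{v}_i + N\mathbb{Z}_{\geq 0}^n \;\subseteq\; (\mathbf{u}_i+\mathcal{L})\cap\mathbb{Z}_{>0}^n,
\end{equation*}
so by the vanishing hypothesis $p_i$ vanishes on the infinite $n$-dimensional grid $\mathbf{v}_i+N\mathbb{Z}_{\geq 0}^n$. Then I would invoke (or give a two-line induction on $n$ for) the standard fact that a polynomial $p(x_1,\ldots,x_n)\in\mathbb{R}[x_1,\ldots,x_n]$ that vanishes on a product grid $\prod_{j=1}^n (a_j+N\mathbb{Z}_{\geq 0})$ is the zero polynomial: fix the last $n-1$ variables at grid values and deduce, from infinitely many roots of a univariate polynomial, that all coefficients (viewed as polynomials in the remaining variables) vanish on the grid in $n-1$ variables, then iterate. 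This forces $p_i\equiv 0$.

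Since this is true for every coset $i\in I$, the function $h$ is identically zero, proving the first assertion. For the consequence, I would simply note that the difference of two quasipolynomials is again a quasipolynomial: if $h_1,h_2$ are defined with respect to finite-index sublattices $\mathcal{L}_1,\mathcal{L}_2$, then $h_1-h_2$ is a quasipolynomial with respect to $\mathcal{L}_1\cap\mathcal{L}_2$, which still has finite index in $\mathbb{Z}^n$. Applying the first part to $h_1-h_2$ yields the conclusion. The only mildly delicate step is the choice of $\mathbf{v}_i\in\mathbb{Z}_{>0}^n$ together with ensuring that the positive translate of the grid stays inside the coset; neither is hard, but it is the place where the strict positivity of $\mathbb{Z}_{>0}^n$ (as opposed to $\mathbb{Z}_{\geq 0}^n$) is irrelevant, so no subtlety hides there.
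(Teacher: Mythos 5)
Your proof is correct, and it takes a genuinely different (and arguably cleaner) route than the paper's. The paper's argument works coset by coset as well, but handles the vanishing on a coset $\mathbf{u}+\mathcal{L}$ via the rational-lines device: first treating $\mathbf{u}=\mathbf{0}$ (parametrize lines $\{t\mathbf{v}\}$ with $\mathbf{v}\in\mathcal{L}\cap\mathbb{Z}_{>0}^n$, get a vanishing univariate polynomial on each, conclude $p$ vanishes on $\mathbb{Z}_{>0}^n$), and then waving at a ``change of coordinates'' for general $\mathbf{u}$, before applying the standard induction on $n$. You instead exploit directly that $\mathcal{L}$, being finite index, contains the product lattice $N\mathbb{Z}^n$, so the coset contains a full-dimensional shifted product grid $\mathbf{v}_i+N\mathbb{Z}_{\geq0}^n\subset\mathbb{Z}_{>0}^n$, to which the standard ``vanishing on a grid implies zero'' induction applies immediately. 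This buys you two things: you avoid the separate $\mathbf{u}=\mathbf{0}$ case and the unexplicated change of coordinates, and the positivity constraint is handled transparently by shifting the representative into the positive orthant along $N(1,\ldots,1)$. The two arguments rely on essentially the same underlying induction in the final step, so neither is more general; yours is simply more uniform.
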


%Ahi te la deje don DAVID

% \begin{proof}
% We prove it by induction in $n$.
% If $n=1$, then $h$ has infinitely many zeros so it is the zero polynomial. %Acuerdense que estamos en los reales
% For $n>1$ we consider $h$ as a quasipolynomial in $\mathbb{R}[m_n][m_1, \dots,m_{n-1}]$.
% In other words, we write
% \[
% h = \sum_{\mathbf{a}\in\mathbb{N}^{n-1}}  q_{\mathbf{a}}(m_n)\mathbf{m}^\mathbf{a},
% \]
% where $q_{\mathbf{a}}(m_n)$ is a quasipolynomial in one variable.
% Applying the induction hypothesis to $h|_{m_n=1}$, we obtain the zero quasipolynomial so $q_{\mathbf{a}}(1)=0$ for all $\mathbf{a}$.
% Repeating the argument for $h|_{m_n=k}$, $k\in \mathbb{N}$ we see that each quasipolynomial $q_{\mathbf{a}}(m_n)=0$ is identically zero as it vanishes on $\mathbb{N}$.
% It follows that $h$ is equal to zero.
% The second part follows by considering the difference of the two quasipolynomials.
% \end{proof}

\begin{proof}
    Let $\mathcal{L}\subset\mathbb{Z}^n$ be a finite-index lattice.
    It is enough to show that for any $\mathbf{u} \in \mathbb{Z}^n_{\geq 0}$ and for any polynomial  $p\in \mathbb{R}[x_1,\ldots , x_n]$   such that $p(\mathbf{m}) =0$, for all $ \mathbf{m} \in  (\mathbf{u} + \mathcal{L})\cap \mathbb{Z}^n_{> 0}$, we have that $p$ is the zero polynomial. We will prove this in two steps.

\begin{claim}If  $\mathbf{u} \in \mathbb{Z}^n_{\geq 0},$  $p\in \mathbb{R}[x_1,\ldots , x_n]$ are  such that $p(\mathbf{m}) =0$ for all $ \mathbf{m} \in  (\mathbf{u} + \mathcal{L})\cap \mathbb{Z}^n_{> 0}$, then $p$ is  zero in $\mathbb{Z}^n_{> 0}$. 
\end{claim}
  \begin{proof}
    We first prove the  case  when $\mathbf{u}=\mathbf{0}$.
    If $p$ vanishes on $\mathcal{L}\cap \mathbb{Z}^n_{> 0} $, then for every $\mathbf{v} =(v_1,v_2,\ldots ,v_n)\in\mathcal{L}\cap \mathbb{Z}^n_{> 0}$ the univariate polynomial
    $p(tv_1,tv_2,\cdots,tv_n)\in \mathbb{R}[t]$ vanishes for every $m\in \mathbb{Z}_{>0}$, hence it is the zero polynomial.
    This means that $p$ vanishes on all lines through the origin containing an element of $\mathcal{L}$, i.e. $\mathcal{L}$-\emph{rational lines}. On the other hand, any element of  
    $ \mathbb{Z}^n_{>0}$ belongs to some $\mathcal{L}$-rational line since $\mathcal{L}$ is a finite-index lattice. Summing up, $p$ vanishes on $\mathbb{Z}^n_{> 0} $.   When $\mathbf{u} \neq \mathbf{0}$, we can conclude with a similar argument by doing a change of coordinates. This ends the proof of the claim.
    \end{proof}
    
Let us return to the proof of the lemma. It remains to show that a polynomial that vanishes in $\mathbb{Z}^n_{> 0} $ must be the zero polynomial. We proceed by induction on $n$. If $n=1$, then $p$ has infinitely many zeros so it is the zero polynomial.  For $n>1$ let us write
\begin{equation} \label{eq: h in terms of xn}
    p= \displaystyle  \sum_{i\geq0} p_i(x_1,\ldots , x_{n-1}) x_{n}^i,
\end{equation}
where $p_i \in \mathbb{R}[x_1, \dots,x_{n-1}]$. For each fixed $(n-1)$-tuple $(a_1,\ldots , a_{n-1})\in \mathbb{Z}^{n-1}_{>0}$ the polynomial $p(a_1,\ldots ,a_{n-1},x_n)\in \mathbb{R}[x_n]$ vanishes for all $x_n\in \mathbb{Z}_{>0}$, and therefore, it is the zero polynomial.  It follows that  $p_i(a_1,\ldots , a_{n-1})=0 $ for all  $(a_1,\ldots , a_{n-1})\in \mathbb{Z}^{n-1}_{>0}$. By our inductive hipothesis we conclude that $p_i(x_1,\ldots , x_{n-1})$ is the zero polynomial for all $i\geq 0$. By substituting in \eqref{eq: h in terms of xn} we get $p=0$.

 Finally, the second claim in the lemma follows by considering the difference of the two quasipolynomials.
 \end{proof}   
    %We proceed by induction on $n$. If $n=1$, then $p$ has infinitely many zeros so it is the zero polynomial.  For $n>1$ let us write
%\begin{equation} \label{eq: h in terms of xn}
%    p= \displaystyle  \sum_{i\geq0} p_i(x_1,\ldots , x_{n-1}) x_{n}^i,
%\end{equation}

\begin{proof}[Proof of Theorem \ref{thm:geometric-formula}]
Proposition \ref{prop:geometric-formula-generic} together with the fact that $V^\Phi_J$ are polynomials (Remark \ref{rmk: volumes are polynomials}) imply that $|\leq\theta(\lambda)|$ is a polynomial of degree $n$ in the $m_1,\ldots,m_n$ when they are positive integers. 
By Proposition \ref{prop:quasi-polynomiality} we know that $|\leq\theta(\lambda)|$ is a quasi-polynomial in the $m_i$'s.
If two quasipolynomials agree on the set of positive integers then they must agree everywhere, by Lemma \ref{lem:aux}.
Hence formula \eqref{eq-final} holds for every orbit polytope, with $\lambda$ generic or not.

Finally by Corollary \ref{cor:independent} the volume polynomials are linearly independent hence the coefficients $\mu_J^{\Phi}$ are unique.
\end{proof}

This implies that if $\Phi$ has rank $n$ and ${\lambda=(m_i)_{i\in I_n}}$ in the fundamental coweight basis, then ${|\leq\theta(\lambda)|}$ is a polynomial of degree $n$ in the ${m_1,\ldots,m_n}$. Taking the sum over a fixed rank ${|J|=d}$ gives the degree $d$ part of the polynomial. We call the coefficients $\mu_J^\Phi$ the \textit{geometric coefficients}.

\section{On the geometric coefficients \texorpdfstring{$\mu_J^\Phi$}{}}\label{mu}

In this section we compute some geometric coefficients $\mu_J^\Phi$. For the rest of this section, $\Phi$ will be a root system of rank $n$. % and $W_f, A_\text{id}$ the corresponding finite Weyl group and fundamental alcove, respectively. 
We recall from Definition \ref{def:lattice} that  $\det(\Gamma)$ denotes the volume of the fundamental parallelepiped spanned by any basis of $\Gamma$.

\subsection{The extreme geometric coefficients \texorpdfstring{$\mu_\emptyset^\Phi$}{} and \texorpdfstring{$ \mu_{I_n}^\Phi$}{}} The geometric coefficient corresponding to the empty set is easily determined. Using the geometric formula \eqref{geofordef}, we get
\begin{equation*}
    \mu_\emptyset^\Phi=\sum_{J\subseteq I_n}\mu_J^\Phi V_J^\Phi(\mathbf{0})=|\leq\theta(\mathbf{0})|
    =|\leq w_0|
    =|W_f|.
\end{equation*}

\begin{lemma}\label{lem: biggeocoef}
    Let $\text{Vol}(A_\text{id})$ be the $n$-dimensional volume of the fundamental alcove. Then
\begin{equation*}
% A mi me sale un factorial (n-1)! tambien
%=n!\sqrt{n+1} 
    \mu_{I_n}^\Phi=\dfrac{1}{\text{Vol}(A_\text{id})}.
\end{equation*}
\end{lemma}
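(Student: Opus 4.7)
The plan is to specialize the explicit expression for the geometric coefficients given in \eqref{eq:geometric_coefficient} to $J = I_n$ and then reconcile the resulting quantity with $1/\text{Vol}(A_\text{id})$ using the classical order formula for $W_f$ together with the volume formula of Example \ref{ex:alcove_volume}.

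For $J = I_n$ formula \eqref{eq:geometric_coefficient} reads
\begin{equation*}
\mu_{I_n}^\Phi = |W_f|\cdot\frac{|W_f|}{|W_{I_n}|}\cdot\nu\bigl(\mathsf{t}(\mathsf{G}_{I_n}(\lambda),\mathsf{Q}^\Phi(\lambda))\bigr)\cdot v_{I_n}.
\end{equation*}
Since $W_{I_n}=W_f$, the factor $|W_f|/|W_{I_n}|$ equals $1$. The face $\mathsf{G}_{I_n}(\lambda)$ is the full polytope $\mathsf{Q}^\Phi(\lambda)$, whose linear span $L_{I_n}$ is all of $V$; hence the transverse cone lives in the trivial quotient $V/L_{I_n}=\{0\}$, and $\nu(\{0\})=1$ by the normalization of the Berline-Vergne function (see the discussion in Example \ref{Pick}; alternatively one can verify this by applying \eqref{eq:local-formula} to a single lattice point). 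Finally, by \eqref{eq:v_J}, $v_{I_n} = 1/\det(\mathbb{Z}\Phi^\vee\cap V) = 1/\det(\mathbb{Z}\Phi^\vee)$. Assembling these, $\mu_{I_n}^\Phi = |W_f|/\det(\mathbb{Z}\Phi^\vee)$.

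It remains to identify $|W_f|/\det(\mathbb{Z}\Phi^\vee)$ with $1/\text{Vol}(A_\text{id})$. Since $\mathbb{Z}\Phi^\vee$ is a sublattice of $\Lambda^\vee$ of finite index, $\det(\mathbb{Z}\Phi^\vee) = [\Lambda^\vee:\mathbb{Z}\Phi^\vee]\cdot\det(\Lambda^\vee)$. Combining this with \eqref{eq: wforder}, namely $|W_f|=n!\,\eta_1\cdots\eta_n\,[\Lambda^\vee:\mathbb{Z}\Phi^\vee]$, the index $[\Lambda^\vee:\mathbb{Z}\Phi^\vee]$ cancels and one is left with $n!\,\eta_1\cdots\eta_n/\det(\Lambda^\vee)$, which is exactly the reciprocal of the alcove volume given in Example \ref{ex:alcove_volume}.

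No serious obstacle is expected: the essential input is the collapse of \eqref{eq:geometric_coefficient} at the top-dimensional face, together with the standard normalization $\nu(\{0\})=1$. The only point worth being careful about is that \eqref{eq:geometric_coefficient} was derived for \emph{generic} dominant $\lambda$, but by Theorem \ref{thm:geometric-formula} the coefficients $\mu_J^\Phi$ are uniquely determined regardless of genericity, so the computation above pins down $\mu_{I_n}^\Phi$ unambiguously.
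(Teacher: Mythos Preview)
Your proof is correct and follows essentially the same route as the paper's own argument: specialize \eqref{eq:geometric_coefficient} at $J=I_n$, use that $\nu$ of the full polytope equals $1$ and $v_{I_n}=1/\det(\mathbb{Z}\Phi^\vee)$, then convert $|W_f|/\det(\mathbb{Z}\Phi^\vee)$ into $1/\vol(A_{\text{id}})$ via \eqref{eq: wforder} and Example~\ref{ex:alcove_volume}. Your extra remarks on why the transverse cone is a point and on genericity are sound but not needed beyond what the paper already records.
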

\begin{proof} 

The Berline-Vergne construction $\nu$ has the property that $\nu$ for the whole polytope as a face is equal to 1.
Following equations \eqref{eq:geometric_coefficient} and \eqref{eq:v_J}, we have that
\begin{equation}\label{eq: coeftodo}
    \mu_{I_n}^\Phi = \frac{|W_f|}{\det(\mathbb{Z}\Phi^\vee)}.
\end{equation}
On the other hand, by \cite[Theorem 10.9]{barvinok2008integer}, we know that $$[\Lambda^\vee:\mathbb{Z}\Phi^\vee]=\det(\mathbb{Z}\Phi^\vee)/\det(\Lambda^\vee).$$
Using \eqref{eq: wforder} and substituting in \eqref{eq: coeftodo}, we get
\begin{equation} \label{geo coef In}
    \mu_{I_n}^\Phi = \dfrac{n!\eta_1\cdots\eta_n}{\det(\Lambda^\vee)}= \dfrac{1}{\vol(\FA)},
\end{equation}
where the last equality follows from Equation \eqref{eq:alcove_volume}.
\end{proof}

By \eqref{geo coef In} in order to compute the value of $\mu_{I_n}^{\Phi}$ we need to compute both the product $\eta_{1}\cdots \eta_{n}$ and $\det (\Lambda)^\vee$.  The values of $\mu_{I_n}^{\Phi}$  are computed using \cite[Plates I,\ldots,VI]{Bour46}  and they are  displayed in Table \ref{tab:my_label}. 

\begin{table}[ht]
\footnotesize
    \centering
\begin{tabular}{||c | c| c| c | c| c| c | c| c| c  ||} 
 \hline
 \mbox{Type} & $A_n$ & $B_n$ & $C_n$ & $D_n$  &  $E_6$  &  $E_7$ & $E_{8}$ & $F_4$ & $G_2$ \\ [0.5ex] 
 \hline\hline
$\eta_1\cdots \eta_n $ & $1 $& $2^{n-1}$ & $2^{n-1}$  & $2^{n-3}$  & $24$ & $288$ & $17280$ & $48$ & $6$\\
\hline
$\det (\Lambda^\vee)$ &  $\displaystyle \frac{\sqrt{n+1}}{n+1}$  & 1 & $\displaystyle \frac{1}{2}$ & 2 & $\displaystyle \frac{1}{\sqrt{3}}$ &$\displaystyle \frac{1}{\sqrt{2}}$  & 1 &  $2$  &  $\displaystyle \frac{1}{\sqrt{3}}$ \\ [2ex] 
\hline
$\mu_{I_n}^{\Phi}$ & $\displaystyle\frac{(n+1)!}{\sqrt{n+1}}$ & $n!2^{n-1}$ &$n!2^{n} $&$n!2^{n-4}$  & $ 24\sqrt{3} \cdot 6!  $ & $288\sqrt{2} \cdot 7!$ & $17280\cdot 8! $ & $576$ & $12\sqrt{3}$  \\
\hline
\end{tabular}
    \caption{Values of the geometric coefficient $\mu_{I_n}^{\Phi}$.}
    \label{tab:my_label}
    \normalsize
\end{table}

\subsection{Type A} In this section we fix a root system $\Phi$ of type $A_n$. We are going to compute  all the geometric coefficients $\mu_J^{A_n}$ for connected $\emptyset\neq J\subseteq I_n$.

For two positive integers $d,k$ with $k\leq d$, let $\Delta_{k,d}$ be the hypersimplex. In formulas, 
\begin{equation*}
    \Delta_{k,d}=\left\{ x\in [0,1]^d \mid x_1+\cdots+x_d=k\right\}.
\end{equation*}
The vertices of this convex polytope lie in $\mathbb{Z}^d$. Indeed, $\Delta_{k,d}$ is the convex hull of the vectors  whose coordinates consist of $k$ ones and $d-k$ zeros. Equivalently, $\Delta_{k,d}$ is the convex hull of the $S_{d}$-orbit of the vector $(1,\ldots ,1,0,\ldots , 0) \in \mathbb{R}^{d}$ with $k$ ones, where $S_{d}$ acts by permuting coordinates. 

The \emph{Ehrhart polynomial} of $\Delta_{k,d}$ is the polynomial $E_{k,d}(t)\in\bbQ[t]$ such that for every $m\in\mathbb{Z}_{\geq0}$,
\begin{equation*}
    E_{k,d}(m)= \left|\mathbb{Z}^d\cap m\Delta_{k,d} \right|,
\end{equation*}
where $m\Delta_{k,d}$ is the dilation of $\Delta_{k,d}$ with respect to the origin by the factor $m$.

\begin{lemma}\label{lem: simplex} For all $m\in\mathbb{Z}_{\geq0}$, and for all $1\leq k\leq n$,
\begin{equation}
    |\leq\theta(m\varpi_k)|=(n+1)! \, E_{k,n+1}(m).
\end{equation}
\end{lemma}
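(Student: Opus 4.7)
The plan is to combine the Lattice Formula (Theorem~\ref{thm:first-lattice-formula}) with a direct identification of the orbit polytope $\mathsf{P}^{A_n}(m\varpi_k)$ as a translate of the dilated hypersimplex $m\Delta_{k,n+1}$, working inside the ambient $\mathbb{R}^{n+1}$ of Subsection~\ref{ex:typeA}. Since $|W_f|=(n+1)!$ in type $A_n$, Theorem~\ref{thm:first-lattice-formula} reduces the statement to the claim
\begin{equation*}
    \bigl|\mathsf{P}^{A_n}(m\varpi_k)\cap (m\varpi_k+\mathbb{Z}\Phi^\vee)\bigr|\;=\;E_{k,n+1}(m).
\end{equation*}

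First, I would use the explicit formula \eqref{eq:fundamenta_coweight} to write
$$m\varpi_k\;=\;m\bigl(\underbrace{1,\dots,1}_{k},\underbrace{0,\dots,0}_{n+1-k}\bigr)\;-\;\tfrac{mk}{n+1}(\varepsilon_1+\cdots+\varepsilon_{n+1}).$$
Setting $\tau=\tfrac{mk}{n+1}(\varepsilon_1+\cdots+\varepsilon_{n+1})$, the translation $v\mapsto v+\tau$ sends the $W_f$-orbit of $m\varpi_k$ exactly to the $S_{n+1}$-orbit of $m(1,\dots,1,0,\dots,0)$, whose convex hull is $m\Delta_{k,n+1}$ by definition of the hypersimplex. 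Therefore
\begin{equation*}
    \mathsf{P}^{A_n}(m\varpi_k)+\tau\;=\;m\Delta_{k,n+1}.
\end{equation*}

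Second, I would identify the affine lattice $m\varpi_k+\mathbb{Z}\Phi^\vee$ after this translation. Recall that in type $A_n$ the coroot lattice $\mathbb{Z}\Phi^\vee=\mathbb{Z}\Phi$ coincides with the set of integer vectors in $V$, i.e.\ integer vectors with coordinate sum zero. Since $m\varpi_k+\tau=m(1,\dots,1,0,\dots,0)$ is an integer vector with coordinate sum $mk$, the translate $m\varpi_k+\tau+\mathbb{Z}\Phi^\vee$ is precisely the set of integer vectors in $\mathbb{R}^{n+1}$ whose coordinate sum equals $mk$. But this is exactly $\mathbb{Z}^{n+1}\cap \mathrm{aff}(m\Delta_{k,n+1})$, because every point of $m\Delta_{k,n+1}$ lies in the hyperplane $\{x_1+\cdots+x_{n+1}=mk\}$. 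Consequently
\begin{equation*}
    \bigl|\mathsf{P}^{A_n}(m\varpi_k)\cap(m\varpi_k+\mathbb{Z}\Phi^\vee)\bigr|\;=\;\bigl|m\Delta_{k,n+1}\cap\mathbb{Z}^{n+1}\bigr|\;=\;E_{k,n+1}(m),
\end{equation*}
and multiplying by $|W_f|=(n+1)!$ finishes the proof.

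There is no real obstacle: the only point requiring care is keeping track of which lattice is implicit in each description (the Euclidean $\mathbb{Z}^{n+1}$ versus the coroot lattice viewed inside $V$) and checking that the translation $\tau$, although irrational, nevertheless carries one affine lattice onto the other because the shift in coordinate sum matches the integer $mk$. Once this bookkeeping is done, the identification with the Ehrhart polynomial of the hypersimplex is immediate.
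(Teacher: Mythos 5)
Your proof is correct and follows essentially the same strategy as the paper: reduce via the Lattice Formula, translate the orbit polytope $\mathsf{P}^{A_n}(m\varpi_k)$ by $\tau=\tfrac{mk}{n+1}(\varepsilon_1+\cdots+\varepsilon_{n+1})$ onto the dilated hypersimplex $m\Delta_{k,n+1}$, and match up the lattice points. The paper uses the inverse translation $T=(\,\cdot\,)-\tau$ and verifies the two set inclusions by writing $T(u)-m\varpi_k$ explicitly as an integer combination of simple roots, whereas you shortcut this by observing that $\mathbb{Z}\Phi^\vee$ is exactly the set of integer vectors in $V$; this is a legitimate, standard type-$A$ fact and the two arguments are interchangeable.
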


\begin{proof}
Recall the realization of the root system of type $A_n$ inside the subspace $V$ of  $\mathbb{R}^{n+1}$ of vectors whose coordinate sum is $0$, explained in  \S\ref{ex:typeA}. By  Theorem \ref{thm:first-lattice-formula}, we just need to prove the equality 
\begin{equation}\label{Tmaps}
      \vert \mathsf{P}^\Phi(m\varpi_k) \cap (m\varpi_{k} +  \mathbb{Z}\Phi)\vert=\vert\mathbb{Z}^{n+1}\cap m\Delta_{k,n+1}\vert. 
\end{equation}

The dilated hypersimplex $m\Delta_{k,n+1}$ does not belong to the linear subspace $V\subset \mathbb{R}^{n+1}$ but to the parallel affine hyperplane  $V_{mk}$ of 
$\mathbb{R}^{n+1}$ of vectors whose coordinate sum is $mk$.  

We will prove that the map $T:V_{mk}\rightarrow V$ given by 
\begin{equation}
    T(u) = u-\left(\dfrac{mk}{n+1} \right) \sum_{j=1}^{n+1} \varepsilon_j
\end{equation}
is a bijection that restricted to $(m\Delta_{k,n+1} \cap \mathbb{Z}^{n+1} )$ gives exactly the set $$\mathsf{P}^\Phi(m\varpi_k) \cap (m\varpi_{k} +  \mathbb{Z}\Phi),$$ thus proving Equation \eqref{Tmaps}. As $T$ is a translation it is a bijection. The  set $m\Delta_{k,n+1}$ is the convex hull of the $W_f=S_{n+1}$  orbit of the vector $v_k\coloneqq m\varepsilon_1+\ldots + m\varepsilon_k$. Using \eqref{eq:fundamenta_coweight} one sees that
 $T(v_k) = m\varpi_{k}$. Since both $m\Delta_{k,n+1}$ and $\mathsf{P}^\Phi(m\varpi_k)$ are defined as the convex hull of the $W_f=S_{n+1}$  orbit of the vectors $v_k $ and $m\varpi_{k}$, respectively, and $T$ clearly commutes with the action of $S_{n+1}$,
 we conclude that  $T(m\Delta_{k,n+1}) = \mathsf{P}^\Phi(m\varpi_k)$. 

 On the other hand, for $u=(u_1,\ldots , u_{n+1}) \in m\Delta_{k,n+1} \cap \mathbb{Z}^{n+1} $ we have
 $$ T(u)-m\varpi_k = u -v_k   = \sum_{j=1}^n  \left( \sum_{i=1}^j u_i - \left(  \min\{ j,k \}\right) m \right) \alpha_j  \in \mathbb{Z}\Phi.   $$
This shows that 
 \begin{equation} \label{eq dilated 1}
     T (m\Delta_{k,n+1} \cap \mathbb{Z}^{n+1} ) \subseteq \mathsf{P}^\Phi(m\varpi_k) \cap (m\varpi_{k} +  \mathbb{Z}\Phi).   
 \end{equation}
Conversely, for  $u=(u_1,\ldots , u_{n+1}) \in m\varpi_k +\mathbb{Z}\Phi $ we consider $u'=u- m\varpi_k + v_k $. It is easy to see that $T(u')=u$ and that 
$  u'\in v_k +  \mathbb{Z}\Phi \subset \mathbb{Z}^{n+1}$.
This shows that 
\begin{equation} \label{eq dilated 2}
  \mathsf{P}^\Phi(m\varpi_k) \cap (m\varpi_{k} +  \mathbb{Z}\Phi) \subseteq  T (m\Delta_{k,n+1} \cap \mathbb{Z}^{n+1} ).   
\end{equation}
Finally, by combining \eqref{eq dilated 1} and \eqref{eq dilated 2} we get \eqref{Tmaps}. 
\end{proof}

%\begin{lemma}\label{lem: simplex} For all $t\in\mathbb{Z}_{\geq0}$, and for all $1\leq i\leq n$,\begin{equation}
    %|\leq\theta(t\varpi_i)|=(n+1)! \, E_{i,n+1}(t).\end{equation}\end{lemma}

%\begin{proof}We use the classical realization of the root system of type $A_n$ inside $\mathbb{R}^{n+1}$ as in Example \ref{ex:typeA}. In this model the dilated hypersimplex $t\Delta_{i,n+1}$ does not belong to $V$ but to the parallel hyperplane  $V'$ of $\mathbb{R}^{n+1}$ of vectors whose coordinate sum is $ti$. The  set $t\Delta_{i,n+1}$ is the convex hull of the $W_f=S_{n+1}$ orbit of the vector $u_i\coloneqq t\varepsilon_1+\ldots + t\varepsilon_i$. 

%The map $T:V'\rightarrow V$ given by \begin{equation}  T(v) = v-\left(\dfrac{ti}{n+1} \right) \sum_{j=1}^{n+1} \varepsilon_j \end{equation}
%is a bijection such that $T(u_i) = t\varpi_{i}$ by \eqref{eq:fundamenta_coweight}. It follows that $T(t\Delta_{i,n+1}) = \mathsf{P}^\Phi(t\varpi_i)$. Finally, we notice that for $u\in \mathbb{Z}^{n+1}\cap t\Delta_{i,n+1} $ we have $T(u)-t\varpi_i = u -u_i \in \mathbb{Z}\Phi$. Thus, we conclude that \begin{equation}\label{Tmaps}
     %T(\mathbb{Z}^{n+1}\cap t\Delta_{i,n+1})  =  \mathsf{P}^\Phi(t\varpi_i) \cap (t\varpi_{i} +  \mathbb{Z}\Phi  ). \end{equation}
%The result follows by taking the cardinalities on both sides of \eqref{Tmaps} and then by applying Theorem \ref{thm:first-lattice-formula}.\end{proof}

In \cite[Lemma 4.1]{Ferroni} the author provided closed formulas for the coefficients of the polynomial $E_{k,d}(t)$. For $0\leq m<d$, the coefficient $\tilde{e}_{k,d,m}$ of $t^m$ in $E_{k,d}(t)$ is given by
\begin{equation}\label{eq: Ferroni coefs}
    \tilde{e}_{k,d,m}=\dfrac{1}{(d-1)!}\,\sum_{j=0}^{k-1}\sum_{i=0}^{d-m-1}(-1)^{i+j}\binom{d}{j}(k-j)^m\left[
\begin{matrix}
d-j \\
m+1+i-j \\
\end{matrix}
\right]\left[
\begin{matrix}
j \\
j-i \\
\end{matrix}
\right],
\end{equation}
where the brackets denote the (unsigned) Stirling numbers of the first kind \cite[A008275]{oeis}. 
% By convention, 
% \[
% \left[
% \begin{matrix}
% 0 \\
% b \\
% \end{matrix}
% \right]
% =\begin{cases}
%     1 & \text{if } b=0\\[10pt]
%     0 & \text{if } b<0
% \end{cases}
% \]
Therefore, by Lemma \ref{lem: simplex}, we can write
\begin{equation}\label{eq: eeeee}
|\leq\theta(t\varpi_i)|=e_{i,0}+e_{i,1}\,t+\cdots+e_{i,n}\,t^n,
\end{equation}
with $e_{i,j}=(n+1)!\,\tilde{e}_{i,n+1,j}$.

On the other hand, for $a$ a non-negative integer, we can obtain another expression for $|\leq\theta(a\varpi_i)|$ by applying the  Geometric Formula \eqref{geofordef}.
Namely,  
\begin{equation}\label{eq: twi}
    |\leq\theta(a\varpi_i)|=\sum_{J\subseteq I_n}\mu_J^{A_n} V_J^{A_n}(a\varpi_i).
\end{equation}

We can compute $V_J^{A_n}(a\varpi_i)$ explicitly.

\begin{lemma}\label{lem: vol twi} Let $\emptyset\neq J\subseteq I_n$ and $a\in\bbZ_{\geq0}$. 
Then, 
%\[
$V_J^{A_n}(a\varpi_i) = c_{i,J}a^{|J|}$
%\begin{cases}    c_{i,J} \, t^{|J|}, & \text{if } i\in J \text{ and } $J$ \text{ is connected}, \\ 0, 
    %& \text{otherwise,}\end{cases}\]
where $c_{i,J}$ is the coefficient of $m_i^{|J|}$ in the polynomial $V_J^{A_n}(\mathbf{m})$. Moreover, $c_{i,J}=0$ unless $i\in J$ 
and $J$  is connected.
\end{lemma}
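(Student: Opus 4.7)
The plan is to rely on two facts established earlier: by Remark \ref{rmk: volumes are polynomials}, the expression $V_J^{A_n}(\mathbf{m})$ is a homogeneous polynomial of degree $|J|$ in the variables $\{m_j : j\in J\}$; and by Lemma \ref{lem: volgralfacts}\eqref{VolFact2}, this polynomial factors as $V_J^{A_n} = \prod_{K\in\mathcal{C}_J} V_K^{A_n}$, where each factor is itself a homogeneous polynomial of positive degree $|K|$ involving only the variables $\{m_j : j\in K\}$.

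First, I would substitute $\lambda = a\varpi_i$ directly, so that $m_i = a$ and $m_j = 0$ for all $j \neq i$. Because $V_J^{A_n}(\mathbf{m})$ only depends on the variables indexed by $J$ and is homogeneous of degree $|J|$, every monomial except the pure power $m_i^{|J|}$ gets killed by this substitution. This yields $V_J^{A_n}(a\varpi_i) = c_{i,J}\, a^{|J|}$, which is the first assertion.

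For the moreover clause, the case $i\notin J$ is immediate: $m_i$ simply is not among the variables of $V_J^{A_n}$, so the coefficient of $m_i^{|J|}$ is $0$ by inspection. Now suppose $i\in J$ but $J$ has at least two connected components, say $\mathcal{C}_J = \{K_1,\ldots,K_r\}$ with $r\geq 2$, and without loss of generality $i\in K_1$. By Lemma \ref{lem: volgralfacts}\eqref{VolFact2},
\begin{equation*}
V_J^{A_n}(a\varpi_i) \;=\; \prod_{\ell=1}^{r} V_{K_\ell}^{A_n}(a\varpi_i).
\end{equation*}
For each $\ell\geq 2$, we have $i\notin K_\ell$, so applying the argument of the previous paragraph to $K_\ell$ in place of $J$ gives $V_{K_\ell}^{A_n}(a\varpi_i) = 0$ (here one uses that $V_{K_\ell}^{A_n}$ has positive degree $|K_\ell|$, hence vanishes when all of its variables are set to zero). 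Therefore $V_J^{A_n}(a\varpi_i) = 0$, and combined with the first assertion $V_J^{A_n}(a\varpi_i) = c_{i,J}\,a^{|J|}$, this forces $c_{i,J} = 0$.

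I do not anticipate a genuine obstacle: the whole statement is a bookkeeping consequence of homogeneity together with the multiplicative decomposition over connected components. The only thing worth double-checking is that setting the variables of $V_{K_\ell}^{A_n}$ to zero really does give $0$, which relies on $|K_\ell| \geq 1$ and hence requires $r \geq 2$ so that no component is empty; this is exactly why the connectedness hypothesis enters.
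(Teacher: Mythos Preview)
Your proof is correct and follows essentially the same route as the paper: homogeneity (Remark \ref{rmk: volumes are polynomials}) gives the first assertion, and Lemma \ref{lem: volgralfacts}\eqref{VolFact2} together with the vanishing of a factor indexed by a component not containing $i$ gives the moreover clause. The only cosmetic difference is that the paper invokes Lemma \ref{lem: volgralfacts}\eqref{VolFact1} for the case $i\notin J$, whereas you argue directly from the variable set; both are equivalent.
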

\begin{proof} 
The polynomial $V_J^{A_n}(\mathbf{m})$ is homogeneous of degree $|J|$ in the variables $(m_j)_{j\in J}$. It follows that  $V_J^{A_n}(a\varpi_i)=c_{i,J} \, a^{|J|}$.

By Lemma \ref{lem: volgralfacts}\eqref{VolFact1}  if $i\notin J$, we have $V_J^{A_n}(a\varpi_i)= V_J^{A_n}(\mathbf{0})=0$. Finally, let us assume that  $J$ is not connected. Let  $J_1$ be a connected component of $J$ such that $i\not \in J_1$. We saw above that $V_{J_1}^{A_n}(a\varpi_i)=0 $. On the other hand, Lemma \ref{lem: volgralfacts}\eqref{VolFact2} shows that  
 $V_{J_1}^{A_n}$ is a factor of $V_J^{A_n}$. We conclude that $V_{J}^{A_n}(a\varpi_i)=0 $. 
\end{proof}

We can give an explicit formula for the coefficients $c_{i,J}$ occurring in Lemma \ref{lem: vol twi}. 
Note that every non-empty connected set in type $A_n$ is given by $I(l,u)\coloneqq I_l+u$ for some $1\leq l\leq n$ and $0\leq u \leq n-l$.

\begin{lemma} \label{lem: c coeficientes}
   Let $1\leq l\leq n$ and $0\leq u \leq n-l$. If  $i\in I(l,u)$ then we have
\begin{equation*}
    c_{i,I(l,u)}=\dfrac{\sqrt{l+1}}{l!}A(l,i-u),
\end{equation*}
where $A(r,s)$ denotes the Eulerian number \cite[\text{A008292}]{oeis}.
\end{lemma}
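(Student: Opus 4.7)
The plan is to reduce to the case $u = 0$ (when $J = I_l$) by identifying the $A_n$ orbit polytope with one in a sub-root system of type $A_l$, and then to extract the leading coefficient of $|\leq \theta(m\varpi_j)|$ in two different ways using results already established in the paper.

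For the reduction, by Lemma \ref{lem: volgralfacts}(i) the value $V_{I(l,u)}^{A_n}(a\varpi_i)$ depends only on the $I(l,u)$-part of $a\varpi_i$.  Decomposing $\varpi_i$ in the $I(l,u)$-mixed basis from Lemma \ref{lem: pyramidprev}(i), its component in the orthogonal complement of $L_{I(l,u)}$ is fixed by $W_{I(l,u)}$, while its component in $L_{I(l,u)}$ is characterized by pairing $\delta_{ij}$ with $\alpha_j$ for $j\in I(l,u)$; this is exactly the fundamental coweight $\varpi_{i-u}$ of the sub-root system of type $A_l$ supported on $I(l,u)$, after the natural relabeling $j\mapsto j-u$.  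Hence $\mathrm{Conv}(W_{I(l,u)}\cdot (a\varpi_i))$ is a Euclidean translate of the corresponding $A_l$ orbit polytope, so their $l$-dimensional volumes agree.  This yields $c_{i,I(l,u)} = c_{i-u,I_l}^{A_l}$, and it suffices to compute $c_{j,I_l}^{A_l}$ for $j\in\{1,\ldots,l\}$.

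To compute $c_{j,I_l}^{A_l}$, apply Lemma \ref{lem: simplex} in the $A_l$ root system to obtain $|\leq \theta(m\varpi_j)| = (l+1)!\,E_{j,l+1}(m)$, whose leading coefficient in $m$ is $(l+1)!\,\tilde{e}_{j,l+1,l}$.  On the other hand, the Geometric Formula combined with Lemma \ref{lem: vol twi} expresses the leading term of $|\leq \theta(m\varpi_j)|$ as $\mu_{I_l}^{A_l}\cdot c_{j,I_l}^{A_l}\cdot m^l$; substituting $\mu_{I_l}^{A_l} = (l+1)!/\sqrt{l+1}$ from Equation \eqref{geo coef In} and matching yields
$$c_{j,I_l}^{A_l} = \sqrt{l+1}\,\tilde{e}_{j,l+1,l}.$$
Finally, plug $d=l+1$ and $m=l$ into Ferroni's formula \eqref{eq: Ferroni coefs}: the inner sum collapses since $d-m-1 = 0$, the two Stirling factors both equal $1$ (each has equal upper and lower index), and the surviving expression becomes
$$\tilde{e}_{j,l+1,l} = \frac{1}{l!}\sum_{k=0}^{j-1}(-1)^k \binom{l+1}{k}(j-k)^l = \frac{A(l,j)}{l!},$$
the last equality being the classical Worpitzky identity for Eulerian numbers.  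Combining these yields $c_{j,I_l}^{A_l} = \sqrt{l+1}\cdot A(l,j)/l!$, and the reduction gives the general formula.

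The step I expect to be the main obstacle is the reduction in the first paragraph: one must verify carefully that the orthogonal decomposition of $\varpi_i$ along $L_{I(l,u)}$ really identifies the $L_{I(l,u)}$-component with the fundamental coweight of the $A_l$ subsystem, and that the $l$-dimensional Euclidean volume induced from $V$ agrees with the one used in the $A_l$ sub-computation.  The remaining steps are mechanical combinations of already-established results, modulo the standard Worpitzky expansion of Eulerian numbers.
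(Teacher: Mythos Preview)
Your proof is correct and the reduction step parallels the paper's, but your second step is genuinely different.

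For the reduction $c_{i,I(l,u)} = c_{i-u,I_l}$, the paper simply invokes Lemma~\ref{lem: generalized pyramid formula} (the recursive volume formula depends only on the Dynkin type of $J$, which is $A_l$ regardless of the shift $u$), whereas you spell out the geometric identification.  Your worry about the decomposition is unnecessary: since $(\varpi_k,\alpha_j)=\delta_{kj}$, the span of $\{\varpi_k : k\notin J\}$ \emph{is} $L_J^\perp$, so the $J$-mixed basis decomposition coincides with the orthogonal one, and the $L_J$-component of $\varpi_i$ is indeed the $A_l$ fundamental coweight.  The volume compatibility is immediate because the simple roots $\alpha_{u+1},\dots,\alpha_{u+l}$ in $\mathbb{R}^{n+1}$ are isometric to the standard $A_l$ simple roots in $\mathbb{R}^{l+1}$.

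For computing $c_{j,I_l}$, the paper just cites Postnikov's permutohedron volume formula \cite[Theorem~16.3(3)]{postnikov2009permutohedra} together with the normalization in Remark~\ref{rmk: postinkov}.  Your route---matching the leading coefficient of $|\leq\theta(m\varpi_j)|$ computed via the Geometric Formula and $\mu_{I_l}^{A_l}=(l+1)!/\sqrt{l+1}$ against the leading coefficient of $(l+1)!\,E_{j,l+1}(m)$ from Ferroni's formula~\eqref{eq: Ferroni coefs}---is longer but stays entirely within results the paper has already established or quoted.  This is a nice alternative that trades one external citation (Postnikov) for another already present in the paper (Ferroni).  One terminological nit: the identity $\sum_{p}(-1)^p\binom{l+1}{p}(j-p)^l = A(l,j)$ is the standard inclusion--exclusion formula for Eulerian numbers rather than the Worpitzky identity proper.
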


\begin{proof}
 By  Lemma \ref{lem: generalized pyramid formula} we have $c_{i,I(l,u)} = c_{i-u,I_l}$.
 Therefore, it is enough to show that 
 \begin{equation}
     c_{j,I_l} = \dfrac{\sqrt{l+1}}{l!}A(l,j)
 \end{equation}
for all $1\leq j \leq l$. This last equality follows by  \cite[Theorem 16.3(3)]{postnikov2009permutohedra} via  Remark \ref{rmk: postinkov}.
\end{proof}

For $1\leq l,i\leq n$, let $\mathcal{C}_i^l$ be the collection of connected subsets $J\subseteq I_n$ such that $i\in J$ and $|J|=l$. 
Although the left-hand side of \eqref{eq: twi} is only defined for $a\in \mathbb{Z}_{\geq 0}$, the right-hand side is a polynomial in the variable $a$. We know that two polynomials that agree in an infinite number of points are equal,  
so by equating the coefficients in %Lemma \ref{lem: vol twi}, 
\eqref{eq: eeeee} and \eqref{eq: twi} we get 
\begin{equation}\label{eq: coefsyst}
\sum_{J\in\mathcal{C}_i^l}\mu_J^{A_n}\,c_{i,J}=e_{i,l}, 
\end{equation}
for all $1\leq l,i\leq n$. 
%Let us fix $1\leq l\leq n$ and let $u$ be such that  $0\leq u\leq n-l$. Note that $I(l,u)\in\mathcal{C}_{1+u}^l$ and $1+u\leq n-l+1$. Thus, solving equations \eqref{eq: coefsyst} for   $1\leq i \leq n-l+1$, accomplishes our goal.  
We can reformulate this problem as a family of systems of linear equations as follows. For a fixed $1\leq l \leq n$, let $M_l=(m_{ij}) \in M_{n-l+1}(\mathbb{R}) $ where
\begin{equation}
    m_{ij} = \left\{ \begin{array}{rl}
        c_{i,I(l,j-1)},  & \mbox{if } i \in I(l,j-1);  \\
         0,& \mbox{otherwise. } 
    \end{array}
    \right.
\end{equation}
Furthermore, let $\mu_l = \left( \mu_{I(l,0)}^{A_n},\mu_{I(l,1)}^{A_n}, \ldots , \mu_{I(l,n-l)}^{A_n} \right)^t$ and $F_l=(e_{1,l},e_{2,l},\ldots , e_{n-l+1,l})^t$, where $v^t$ denotes the transpose of $v$. Then, the system of linear equations $M_{l}\mu_l= F_l$ is equivalent to the subset of equations in \eqref{eq: coefsyst}, obtained by considering $1\leq i \leq n-l+1$.

It is easy to see that $M_l$ is a lower triangular matrix with determinant 
\begin{equation}
    \det (M_l) = \prod_{i=1}^{n-l+1}  c_{i,I(l,i-1)}.
\end{equation}
By Lemma \ref{lem: c coeficientes} we know that $c_{i,I(l,i-1)}\neq 0$ for all $1\leq i \leq n-l+1$. Therefore, $M_{l}$ is invertible and we can obtain the geometric coefficients for $J$ connected by solving the system $M_l\mu_l = F_l$.  

We finish this paper by providing closed formulas for geometric coefficients in some specific cases. For instance, if $l=1$ then $M_l$ is a diagonal matrix. Thus, we get
 \begin{equation}
     \mu_{\{i\}}^{A_n} =  \dfrac{e_{i,1}}{c_{i,\{i\}}}.
 \end{equation}
Similarly, as $M_l$ is a lower triangular matrix we can solve the equality associated with the first row of $M_l$ in the system $M_l\mu_l = F_l$ for all $1\leq l \leq n$. This yields
\begin{equation} 
     \mu_{I_l}^{A_n} =  \dfrac{e_{1,l}}{c_{1,I_l}} = \frac{l!}{\sqrt{l+1}}(n+1)\left[
\begin{matrix}
n+1 \\
l+1 \\
\end{matrix}
\right],   
\end{equation}
 which corresponds to \eqref{eq: intro geo coeff}. 

\begin{remark}\label{remarkfinal}
In \cite{castillo2021todd} the authors provided a combinatorial formula for the $\nu$ function (it is called $\alpha$ in the reference) from Berline and Vergne in the type A case.
By Equation \eqref{eq:geometric_coefficient}, the positiveness of the geometric coefficients $\mu_J^\Phi$ is directly tied to the positivity of $\nu$.
By \cite[Example 6.3]{castillo2021todd} there exists faces such that the $\nu$ function is negative. 
Hence there are also negative $\mu$ coefficients.
%Aca Fede tiene que explicar por que los coeficientes de Fede son iguales que los nuestros \cite[Example 6.3]{castillo2021todd}
%   \Nico{Fede resolver esto}
\end{remark}

\bibliography{bibliography}
\bibliographystyle{plain}
\end{document}